\theoremstyle{plain}
\newtheorem{theorem}{Theorem}[subsection]
\newtheorem*{acknowledgement*}{\protect \acknowledgementname}
\providecommand{\acknowledgementname}{Acknowledgement}
\newaliascnt{setup}{theorem}
\newtheorem{setup}[setup]{Setup}
\newaliascnt{question}{theorem}
\newtheorem{question}[question]{Question}
\newaliascnt{lemma}{theorem}
\newtheorem{lemma}[lemma]{Lemma}
\newaliascnt{assumption}{theorem}
\newaliascnt{conjecture}{theorem}
\newtheorem{conjecture}[conjecture]{Conjecture}
\newaliascnt{proposition}{theorem}
\newtheorem{proposition}[proposition]{Proposition}
\newaliascnt{corollary}{theorem}
\newtheorem{corollary}[corollary]{Corollary}
\newaliascnt{problem}{theorem}
\theoremstyle{definition}
\newaliascnt{definition}{theorem}
\newtheorem{definition}[definition]{Definition}
\newaliascnt{example}{theorem}
\newtheorem{example}[example]{Example}
\theoremstyle{remark}
\newaliascnt{remark}{theorem}
\newtheorem{remark}[remark]{Remark}
\newaliascnt{claim}{theorem}
\newaliascnt{fact}{theorem}
\newaliascnt{notation}{theorem}
\newaliascnt{remarks}{theorem}
\DeclareMathOperator\Char{char}
\DeclareMathOperator\rmd{d}
\DeclareMathOperator\End{End}
\DeclareMathOperator\Fil{Fil}
\DeclareMathOperator\Gr{Gr}
\DeclareMathOperator\Hom{Hom}
\DeclareMathOperator\id{id}
\DeclareMathOperator\rank{rank}
\DeclareMathOperator\Spec{Spec}
\newcommand{\dR}{\mathrm{dR}}
\newcommand{\Hig}{\mathrm{Hig}}
\newcommand{\et}{\text{\'et}}
\newcommand{\MF}{\mathcal{MF}}
\newcommand{\MCF}{\mathrm{MCF}}
\newcommand{\MIC}{\mathrm{MIC}}
\newcommand{\MC}{\mathrm{MC}}
\numberwithin{equation}{section}
\definecolor{backgroundcolor}{RGB}{210,210,210}
\definecolor{wordscolor}{RGB}{50,50,50}
\def\Flow{\mathrm{Flow}}
\begin{document}

\title{Parabolic crystalline representations}

\author{Zhenmou Liu}
\email{\href{mailto:zhenmouliu@mail.ustc.edu.cn}{zhenmouliu@mail.ustc.edu.cn}}
\address{School of Mathematical Sciences, University of Science and Technology of China, Hefei, Anhui 230026, PR China}

\author{Jinbang Yang}
\email{\href{mailto:yjb@mail.ustc.edu.cn}{yjb@mail.ustc.edu.cn}}
\address{School of Mathematical Sciences, University of Science and Technology of China, Hefei, Anhui 230026, PR China}

\author{Kang Zuo}
\email{\href{mailto:zuok@uni-mainz.de}{zuok@uni-mainz.de}}
\address{School of Mathematics and Statistics, Wuhan University, Luojiashan, Wuchang, Wuhan, Hubei, 430072, P.R. China.; Institut f\"ur Mathematik, Universit\"at Mainz, Mainz 55099, Germany}

\begin{abstract}
The theory of crystalline representations was established by Fontaine and Laffaille, Faltings, and others. In this paper, we develop a parabolic version of this theory. The key point is the construction of the parabolic version of Fontaine-Faltings modules and Faltings’ $\mathbb D$-functor. The theory of Higgs-de Rham flows can be used to efficiently construct crystalline representations. We have established a parabolic version and utilized it to construct infinitely many crystalline representations. The twisted versions discussed in Sun, Yang, and Zuo's work can be seen as a special case, where the parabolic weights are equal at every infinity point.
\end{abstract}

\maketitle
\tableofcontents

\section{\bf Introduction}
The nonabelian Hodge theory, established by Hitchin and Simpson, associates semisimple flat bundles on a compact K\"ahler manifold with polystable Higgs bundles possessing vanishing Chern classes. As an analogue, Ogus and Vologodsky established the nonabelian Hodge theory in positive characteristic in \cite{OgVo07}. They constructed a pair of quasi-inverse functors, known as the Cartier functor and the inverse Cartier functor, between the category of nilpotent Higgs modules of exponent $\leq p-1$ and the category of nilpotent flat modules of exponent$\leq p-1$. Let $\mathcal{Y}$ be a proper smooth scheme over $W \coloneqq W(k)$ and $Y = \mathcal{Y} \times_W k$, where $k$ is a perfect field of characteristic $p>2$. Fontaine and Laffaille \cite{FoLa82} and Faltings \cite{Fal89} introduced the category $\mathcal{MF}^{\nabla}_{[0, p-2]}(\mathcal{Y}/W)$, whose objects are called the Fontaine-Faltings modules. These modules can be viewed as a $p$-adic analogue of polarized complex variations of Hodge structures. Faltings constructed a fully faithful functor $\mathbb{D}$ from the category of Fontaine-Faltings modules to the category of representations of the \'etale fundamental group of the generic fiber of $\mathcal{Y}$. This functor can be regarded as a $p$-adic analogue of the Riemann-Hilbert correspondence. The representations in the essential image of this functor are called crystalline representations.

Lan, Sheng, and Zuo \cite{LSZ19} introduced the notion of Higgs-de Rham flow. A Higgs-de Rham flow is a sequence of Higgs bundles and filtered de Rham bundles connected by the inverse Cartier functor and the grading functor. They established an equivalence of categories between the category of periodic Higgs-de Rham flows and the category of Fontaine-Faltings modules with endomorphism structures. By finding periodic Higgs bundles, one can obtain crystalline representations via the theory of Higgs-de Rham flows.

In \cite{SYZ22}, Sun, Yang, and Zuo introduced the concepts of twisted periodic Higgs–de Rham flows and twisted Fontaine-Faltings modules. They constructed an equivalence of categories between the category of twisted periodic Higgs-de Rham flows and the category twisted of Fontaine-Faltings module equipped with $W\left(\mathbb{F}_{p^f}\right)$-endomorphism structure, generalizing the work of Lan, Sheng, and Zuo. By investigating the selfmap induced by Higgs-de Rham flow on the moduli space of rank-2 stable logarithmic Higgs bundles of degree $1$ on $\mathbb{P}^1$ with marked points $D=\{x_1, x_2, \ldots, x_m\}$ for $m\geq 4$, they constructed infinitely many projective crystalline representations. This can be viewed as a special case of parabolic version.

In this paper, we establish the theory of parabolic Fontaine-Faltings modules and parabolic crystalline representations.  The key innovation lies in our global description of the pullback of parabolic de Rham bundles and the parabolic Frobenius pullback functor. Let $f: (Y', D_{Y'}) \to (Y, D_Y)$ be a morphism between smooth log schemes over $S$. Let $(V, \nabla)$ be a parabolic $\lambda$-flat bundle over $(Y, D_Y)/S$. The pullback of $(V, \nabla)$ is defined to be the $\lambda$-flat bundle
\[ f^*(V, \nabla) \coloneqq \bigcup_{\gamma \in \mathbb{Q}^n} f^*\big(V(-\gamma D)_0, \nabla(-\gamma D)_0\big) \otimes f^*\big(\mathcal{O}_Y(\gamma D), \lambda \cdot \rmd_Y(\gamma D)\big).\]
This enables us to construct the parabolic Frobenius pullback functor $\mathcal F_n$ over the truncated ring $W_n(k)$ directly in \autoref{sec_PFFandPHDF} as in \cite{LSZ19}.

By applying the parabolic Frobenius pullback functor $\mathcal F_n$, we introduce the notion of parabolic Fontaine-Faltings module over $W_n(k)$, and as a generalization of Faltings' $\mathbb{D}$-functor in \cite{Fal89} and $\mathbb D^{\log}$-functor in \cite{LYZ25}, we construct the $\mathbb D^{par}$-functor for parabolic Fontaine-Faltings modules, whose essential image consists of objects called parabolic crystalline representations.

\begin{theorem}[\autoref{thm_ParabolicDFunctor}]
Let $f$ be a positive integer and let $k$ is a perfect field of characteristic $p>2$ containing $\mathbb F_{p^f}$. Let $W$ be the ring of Witt vectors over $k$ and $W_n=W/p^nW$ with $S=\Spec W$, $S_n=\Spec W_n$. Let $Y$ be a smooth separated  scheme over $W$ with a geometrically connected generic fiber. Let $D_Y\subseteq Y$ be a relative normal crossing divisor $D_Y$ with complement $U_Y$. Denote by $(Y_n, D_{Y_n})\coloneqq (Y, D_Y)\times_SS_n$. Let $\mathcal {(Y,D_Y,U_Y)}$ be the $p$-adic formal completion of ${(Y,D_Y,U_Y)}$ with rigid generic fiber $(\mathcal Y_K,\mathcal{D}_{\mathcal Y_K},\mathcal{U}_{\mathcal Y_K} )$. Denote by $\mathcal Y_K^\circ\coloneqq\mathcal Y_K-\mathcal{D}_{\mathcal Y_K}$.
\begin{enumerate}
\item
There is a functor $\mathbb D^{par}$ from the category strict $p^n$-torsion parabolic Fontaine-Faltings modules over $(Y_n, D_{Y_n})/S_n$ (reps. torsion-free parabolic Fontaine-Faltings modules over $(\mathcal Y,\mathcal{D_Y})/S$)
to the category of continuous $\mathbb Z/p^n \mathbb Z$-representations (reps. $\mathbb Z_p$-representations) of $\pi_1^{\et}(\mathcal Y_K^\circ)$. This functor is compatible with usual Faltings' $\mathbb D$-functor in the following sense: for any strict $p^n$-torsion Fontaine-Faltings modules $M$ over $(Y_n,D_{Y_n})/S_n$ (reps. torsion-free parabolic Fontaine-Faltings modules $M$ over $(\mathcal Y,\mathcal{D_Y})/S$), there is a canonical isomorphism of continuous $\mathbb Z/p^n \mathbb Z$-representation (reps. $\mathbb Z_p$-representations) of $\pi_1^{\et}(\mathcal U_{\mathcal Y_K})$:
\[\mathbb D^{par}(M)\mid_{\mathcal U_{\mathcal Y_K}}\cong \mathbb D(M\mid_{ Y_n-D_{Y_n}}) \quad \left(\text{reps. }\mathbb D^{par}(M)\mid_{\mathcal U_{\mathcal Y_K}}\cong \mathbb D(M\mid_{ \mathcal Y-\mathcal{D_Y}})\right).\]

\item
If $Y$ is proper over $W$, then the functor in (1) can be algebraized  into a functor from the category strict $p^n$-torsion parabolic Fontaine-Faltings modules over $(Y_n, D_{Y_n})/S_n$ (reps. torsion-free parabolic Fontaine-Faltings modules over $(\mathcal Y,\mathcal{D_Y})/S$)
to the category of continuous $\mathbb Z/p^n \mathbb Z$-representations (reps. $\mathbb Z_p$-representations) of $\pi_1^{\et}(Y_K^\circ)$, where $Y_K^\circ$ is the generic fiber of $U_Y$.
\end{enumerate}
\end{theorem}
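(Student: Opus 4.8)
The plan is to reduce the construction of the $\mathbb{D}^{\mathrm{par}}$-functor to Faltings' original $\mathbb{D}$-functor applied on an adapted étale cover, following the template already used for the logarithmic $\mathbb{D}^{\log}$-functor in \cite{LYZ25}, but keeping track of parabolic structures throughout. Recall that a parabolic Fontaine-Faltings module $M$ over $(Y_n, D_{Y_n})/S_n$ with rational weights having common denominator $N$ (prime to $p$) corresponds, via the standard Biswas--Iyer--Simpson dictionary, to an ordinary (log) Fontaine-Faltings module on a Kummer-type log-étale cover $\pi\colon (\widetilde Y_n, \widetilde D_{Y_n}) \to (Y_n, D_{Y_n})$ totally ramified to order $N$ along the components of $D_Y$; concretely, one takes a cyclic cover adapted to the weights, on which the parabolic sheaves $V(-\gamma D)_0$ become honest subsheaves of a single vector bundle with integral weights. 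The parabolic Frobenius pullback functor $\mathcal F_n$ constructed in \autoref{sec_PFFandPHDF} is, by its very definition via the formula for $f^*(V,\nabla)$ in the introduction, compatible with this cover: pulling back along $\pi$ intertwines $\mathcal F_n$ with the ordinary Frobenius pullback on $\widetilde Y_n$. Hence $\pi^* M$ is an ordinary strict $p^n$-torsion (log) Fontaine-Faltings module on $(\widetilde Y_n,\widetilde D_{Y_n})/S_n$, to which the functor $\mathbb D^{\log}$ of \cite{LYZ25} applies, producing a continuous $\mathbb Z/p^n\mathbb Z$-representation of $\pi_1^{\et}$ of the generic fiber of the complement $\widetilde U$.

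**Next I would** descend this representation. The cover $\pi$ carries a natural action of the Galois group $G \cong \prod (\mathbb Z/N\mathbb Z)$ of the Kummer cover, and the parabolic structure on $M$ is precisely the data of a $G$-equivariant structure on $\pi^*M$ in the category of Fontaine-Faltings modules; since $\mathbb D^{\log}$ is a functor it transports this to a $G$-action on $\mathbb D^{\log}(\pi^* M)$ compatible with the $G$-action on $\pi_1^{\et}(\widetilde U_K)$. The subgroup $\pi_1^{\et}(\mathcal Y_K^\circ)$ sits in an extension by $\pi_1^{\et}(\widetilde U_K)$ with quotient $G$ (after passing to the open part $U$, the Kummer cover is genuinely étale, so this is an honest short exact sequence of étale fundamental groups), and the $G$-equivariant representation glues to a representation of the full group $\pi_1^{\et}(\mathcal Y_K^\circ)$ by the standard descent-of-equivariant-objects argument. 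This yields $\mathbb D^{\mathrm{par}}(M)$; functoriality in $M$ and independence of the auxiliary choice of $N$ (any common multiple of the denominators works, and the resulting representations are canonically identified) follow formally. The compatibility statement $\mathbb D^{\mathrm{par}}(M)\mid_{\mathcal U_{\mathcal Y_K}} \cong \mathbb D(M\mid_{Y_n - D_{Y_n}})$ is then immediate: over the open part the parabolic structure is trivial, $\pi$ restricts to an honest étale cover, and both sides are computed by the same $\mathbb D^{\log}$ construction restricted away from the boundary, where $\mathbb D^{\log}$ agrees with Faltings' $\mathbb D$ by the main comparison of \cite{LYZ25}. The torsion-free case over $(\mathcal Y,\mathcal D_{\mathcal Y})/S$ is handled by passing to the inverse limit over $n$, using that all constructions are compatible with reduction mod $p^n$.

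**For part (2),** the algebraization is a direct application of the proper case of Faltings' and Lan--Sheng--Zuo's comparison between the fundamental group of the rigid generic fiber and that of the algebraic generic fiber: when $Y$ is proper over $W$, one has $\pi_1^{\et}(\mathcal Y_K^\circ) \cong \pi_1^{\et}(Y_K^\circ)$ (and likewise on the cover $\widetilde Y$, which is again proper), so the representation produced in part (1) is automatically a representation of $\pi_1^{\et}(Y_K^\circ)$; functoriality is preserved. No new idea is needed here beyond invoking this comparison, exactly as in \cite{LSZ19} and \cite{LYZ25}.

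**The main obstacle** I expect is not the descent bookkeeping but verifying that $\pi^*M$ is genuinely an object of the category of \emph{ordinary} Fontaine-Faltings modules on $\widetilde Y_n$ — in particular that the Frobenius-linear isomorphism $\varphi$ underlying $M$ pulls back to one satisfying the strong divisibility / Griffiths-transversality-type conditions with respect to the integral-weight filtration on the cover, and that the Hodge filtration interacts correctly with the ramification. This requires the explicit local analysis at the boundary: near a component of $D_Y$, in a Kummer coordinate $t = s^N$, one must check that the parabolic Hodge filtration steps $V(-\gamma D)_0$ reorganize into a filtration on $\pi^* V$ stable under $\nabla$ with the correct jumps, and that $\mathcal F_n$ as defined by the displayed formula respects this. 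This is the technical heart, and it is exactly the point where the careful global definition of the parabolic pullback and parabolic Frobenius pullback in \autoref{sec_PFFandPHDF} is designed to pay off, reducing the local check to a computation with the line bundles $\mathcal O_Y(\gamma D)$ and their canonical connections $\lambda\cdot\rmd_Y(\gamma D)$.
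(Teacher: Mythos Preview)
Your approach follows the same template as the paper's --- pass to a ramified cover on which the parabolic structure trivializes, apply an existing $\mathbb D$-type functor, and descend --- but differs in two organizational respects worth noting.

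First, the paper reduces in a single step to Faltings' original $\mathbb D$: it chooses (locally) a cover $\pi'\colon \mathcal Y''\to\mathcal Y$ whose ramification indices along $\mathcal D_{\mathcal Y}$ are divisible by $Np^n$, not just $N$, so that $\pi'^*M$ becomes an ordinary Fontaine-Faltings module with \emph{no log poles and trivial parabolic structure} simultaneously. You instead take an $N$-ramified cover and invoke $\mathbb D^{\log}$ as a black box, hiding a second ($p^n$-ramified) cover inside that call. Both routes are valid; yours is more modular, the paper's more direct.

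Second --- and this is the one genuine gap --- the paper works locally on a covering of $Y$ by small affines from the outset (Step~1/Step~2 in the discussion preceding \autoref{thm_ParabolicDFunctor}), constructing the representation on each small open and then gluing via \cite[Proposition 2.7]{LYZ25}. This is necessary because a global Galois Kummer cover of $(Y,D_Y)$ ramified to order $N$ along each $D_{Y,i}$ need not exist (it requires $\mathcal O_Y(D_{Y,i})$ to admit an $N$-th root in $\Pic(Y)$). Your proposal reads as though a single global $\pi$ is available; you should either localize first and glue, or pass to the root stack $\sqrt[N]{D_Y/Y}$.

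On the descent step, the paper does not argue via $G$-equivariance directly but via the mechanism of \cite[Lemma 2.4]{LYZ25}: the module $\mathbb D(\pi'^*M)$ carries \emph{compatible} actions of $\pi_1^{\et}(\mathcal U_{\mathcal Y_K})$ (from the restriction of $M$ to the open locus, where it is already non-parabolic and non-logarithmic) and of $\pi_1^{\et}(\mathcal Y''_K)$, and these two actions jointly determine the $\pi_1^{\et}(\mathcal Y_K^\circ)$-action. Your $G$-equivariant descent amounts to the same thing once you note that the $\pi_1^{\et}(\mathcal U_{\mathcal Y_K})$-action already supplies the required extension to the full group; but as written, ``$G$-equivariant $\pi_1^{\et}(\widetilde{\mathcal U}_K)$-representation'' alone does not force an extension to $\pi_1^{\et}(\mathcal Y_K^\circ)$ without checking that the implicit $2$-cocycle obstruction vanishes, so you should make the role of the open-locus action explicit. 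Your identification of the technical heart (that $\pi^*M$ lands in the non-parabolic category, with Frobenius structure intact) is correct and is what \autoref{Thm_PullBack} and \autoref{thm_ComparePullbackDeRhamBundle} are designed to supply.
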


We also establish the theory of parabolic Higgs-de Rham flows over the truncated ring $W_n$. The modulo $p$ version of the theory of Higgs-de Rham flows has been extended to the context of parabolic bundles on curves in \cite{KrSh20} in an alternative manner. The primary focus of their construction is to establish the Biswas-Iyer-Simpson correspondences for parabolic $\lambda$-connections and to develop the parabolic nonabelian Hodge theory. Our approach differs from that of Krishnamoorthy and Sheng, as we directly construct the parabolic inverse Cartier functor $\mathcal C_n^{-1}$ which coincides with the parabolic inverse Cartier functor constructed in \cite{KrSh20} when $n=1$ (\autoref{thm_CompareInverseCartierFunctor}), without the necessity of descending it form a cover. We  extend \cite[Theorem 5.3]{LSZ19} to the parabolic setting.

\begin{theorem}[\autoref{thm_equFunctorHdRF&FFMod}]
We keep the same notation as in the above theorem.
Then there is an equivalence of categories from the category of $f$-periodic parabolic Higgs-de Rham flows over $(Y_n,D_{Y_n})/S_n$ to the category of strict $p^n$-torsion parabolic Fontaine-Faltings modules with $W_n(\mathbb F_{p^f})$-endomorphism structure over $(Y_n,D_{Y_n})/S_n$.
\end{theorem}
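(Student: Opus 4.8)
The plan is to construct the equivalence by following the strategy of \cite[Theorem 5.3]{LSZ19}, transporting each step into the parabolic setting using the parabolic Frobenius pullback functor $\mathcal F_n$ and the parabolic inverse Cartier functor $\mathcal C_n^{-1}$ established in \autoref{sec_PFFandPHDF}. Recall that an $f$-periodic parabolic Higgs-de Rham flow over $(Y_n,D_{Y_n})/S_n$ is a sequence
\[ (E,\theta) \xrightarrow{\ \mathcal C_n^{-1}\ } (V_0,\nabla_0,\Fil_0) \xrightarrow{\ \Gr\ } (E_1,\theta_1) \xrightarrow{\ \mathcal C_n^{-1}\ } \cdots \xrightarrow{\ \Gr\ } (E_f,\theta_f) \]
together with an isomorphism $\phi\colon (E_f,\theta_f)\xrightarrow{\sim}(E,\theta)$ of parabolic Higgs bundles, and a strict $p^n$-torsion parabolic Fontaine-Faltings module with $W_n(\mathbb F_{p^f})$-endomorphism structure is a parabolic filtered de Rham bundle $(V,\nabla,\Fil)$ together with an isomorphism $\varphi\colon \mathcal F_n(V,\nabla,\Fil)\xrightarrow{\sim}(V,\nabla)$ to the parabolic Frobenius descent, plus a ring homomorphism $W_n(\mathbb F_{p^f})\to \End$ of the pair $((V,\nabla,\Fil),\varphi)$. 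First I would spell out these two definitions precisely in the parabolic language, fixing the parabolic structures and the compatibility of filtrations with the parabolic weights along $D_{Y_n}$.

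The construction in the forward direction assembles the de Rham pieces $(V_i,\nabla_i,\Fil_i)$ of the flow into a single $\mathbb Z/f\mathbb Z$-graded (or rather $W_n(\mathbb F_{p^f})\otimes_{\mathbb Z_p}W_n$-linear) parabolic filtered de Rham bundle $(V,\nabla,\Fil)=\bigoplus_{i}(V_i,\nabla_i,\Fil_i)$; the parabolic inverse Cartier functors and grading functors, composed $f$ times and then closed up by $\phi$, give precisely the Frobenius structure $\varphi\colon \mathcal F_n(V,\nabla,\Fil)\xrightarrow{\sim}(V,\nabla)$, while the cyclic shift on the index $i$ provides the $W_n(\mathbb F_{p^f})$-action (a generator acting as the shift composed with the identification coming from $\mathrm{Frob}$ on $W_n(\mathbb F_{p^f})$). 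Here I would invoke, verbatim in the parabolic category, the comparison between $\mathcal C_n^{-1}$ and the parabolic Frobenius pullback: the key identity $\mathcal F_n(V,\nabla,\Fil)\cong \mathcal C_n^{-1}(\Gr(V,\nabla,\Fil))$, which is exactly what our global description of $\mathcal F_n$ (via the formula $f^*(V,\nabla)=\bigcup_{\gamma}f^*(V(-\gamma D)_0,\nabla(-\gamma D)_0)\otimes f^*(\mathcal O_Y(\gamma D),\lambda\,\rmd_Y(\gamma D))$ applied to the Frobenius lift) is designed to make transparent.

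The inverse direction starts from $((V,\nabla,\Fil),\varphi)$ with its $W_n(\mathbb F_{p^f})$-structure. Using that $W_n(\mathbb F_{p^f})\otimes_{\mathbb Z_p}W_n(k)\cong \prod_{i\in\mathbb Z/f\mathbb Z}W_n(k)$, the $W_n(\mathbb F_{p^f})$-action decomposes $(V,\nabla,\Fil)$ into $f$ eigen-components $(V_i,\nabla_i,\Fil_i)$, cyclically permuted by the Frobenius; setting $(E_i,\theta_i)\coloneqq \Gr(V_i,\nabla_i,\Fil_i)$ and reading off $\mathcal C_n^{-1}(E_i,\theta_i)\cong (V_{i+1},\nabla_{i+1})$ from the Frobenius structure $\varphi$ (again via the parabolic $\mathcal F_n$–$\mathcal C_n^{-1}$ comparison) recovers a Higgs-de Rham flow, and the wrap-around part of $\varphi$ supplies the periodicity isomorphism $\phi$. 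One then checks the two constructions are mutually inverse and functorial — essentially bookkeeping once the eigenspace decomposition and the comparison isomorphism are in place. The main obstacle I expect is the parabolic bookkeeping in the comparison $\mathcal F_n\cong \mathcal C_n^{-1}\circ\Gr$: one must verify that the parabolic weights are matched correctly under Frobenius pullback (the pullback multiplies weights by $p$ and then the divisor-twisting in the definition of $f^*$ renormalizes them), that the grading functor $\Gr$ interacts with the parabolic structure so that $\Gr$ of a parabolic filtered de Rham bundle is again an admissible parabolic Higgs bundle with weights in the required range, and that these identifications are compatible over all truncation levels $W_n$ — i.e. that the whole picture is compatible with the transition maps $W_{n+1}\to W_n$, so that the torsion-free case over $(\mathcal Y,\mathcal D_Y)/S$ follows by passing to the inverse limit, exactly as in \cite{LSZ19} and \cite{SYZ22}.
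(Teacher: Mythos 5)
Your overall route is the same as the paper's. The paper first isolates the $1$-periodic case as a lemma, where the whole content is the canonical identification $\mathcal T_n\circ\overline{\Gr}(V,\nabla,\Fil)\cong\widetilde{(V,\nabla,\Fil)}$, hence $\mathcal C_n^{-1}\circ\overline{\Gr}\cong\mathcal F_n\circ\widetilde{(\cdot)}$ (checked componentwise in $\alpha\in\mathbb Q^n$); it then reduces the $f$-periodic case to the $1$-periodic one by forming $\bigoplus_{i=0}^{f-1}(V,\nabla,\Fil)^{(n)}_i$ and using the periodicity isomorphism to close up, and goes back by eigen-decomposing under the $W_n(\mathbb F_{p^f})$-action. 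This is exactly the architecture you describe, including the key comparison isomorphism you invoke.

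The one step that would fail as written is your description of the $W_n(\mathbb F_{p^f})$-endomorphism structure in the forward direction. A ``cyclic shift'' is not an endomorphism of $M=\bigoplus_i(V_i,\nabla_i,\Fil_i)$ at all: there is no map $V_i\to V_{i+1}$ of filtered parabolic de Rham bundles (one has $V_{i+1}\cong\mathcal C_n^{-1}\circ\overline{\Gr}$ applied to the $i$-th term, which is not isomorphic to $V_i$ in general), and $\iota$ must land in $\End(M)$, i.e.\ be horizontal, filtration-preserving and commute with the Frobenius structure. The shift is precisely the Frobenius structure $\varphi$ itself; the endomorphism structure is instead scalar multiplication by the Galois conjugates, $\iota(\xi)=m_\xi\oplus m_{\sigma(\xi)}\oplus\cdots\oplus m_{\sigma^{f-1}(\xi)}$ for a generator $\xi$ of $W_n(\mathbb F_{p^f})$ over $\mathbb Z/p^n\mathbb Z$ and $\sigma$ the Frobenius, which commutes with $\varphi$ because $\mathcal F_n\circ\widetilde{(\cdot)}$ twists constants by $\sigma$ and so carries $m_{\sigma^i(\xi)}$ on the $i$-th summand to $m_{\sigma^{i+1}(\xi)}$, matching the action on $V_{i+1}$. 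Your reverse construction (eigen-decomposition via $W_n(\mathbb F_{p^f})\otimes W_n(k)\cong\prod_{\mathbb Z/f\mathbb Z}W_n(k)$) tacitly presupposes exactly this diagonal action, so the fix is local. A second, smaller, wording issue: $\varphi$ is a single application of $\mathcal F_n\circ\widetilde{(\cdot)}$ to the direct sum, whose components are the one-step maps closed up by the periodicity isomorphism, not an $f$-fold composition of $\mathcal C_n^{-1}$ and $\Gr$. With these corrections your argument coincides with the paper's proof; the inverse-limit remark about the torsion-free case is not needed for this statement, which concerns only the strict $p^n$-torsion category.
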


As an application of the theory of parabolic Higgs-De Rham flows, We also study periodic parabolic Higgs bundles over a marked projective line. Let $M_{\Hig}^N$ be the moduli space of semistable graded parabolic Higgs bundles of rank $2$ degree $0$ over $\mathbb{P}^1_k$, such that the Higgs fields have $m$ poles $\{x_1, x_2, \ldots, x_m\}$ and the parabolic weights are in the set $$J_N\coloneqq\left\{\frac{k}{N} : k\in [1,N-1]\cap \mathbb Z, \gcd(k, N) = 1\right\}$$ at $x_1$ and $0$ at other points. Let $(E, \theta)$ be a parabolic Higgs bundle over $\mathbb{P}^1_k$ such that the isomorphism class of $(E, \theta)$ is in $M_{\Hig}^N(k)$. The selfmap is defined on the set $M_{\Hig}^N(k)$, which is the composition of the grading functor followed by the parabolic inverse Cartier functor $\mathcal C_1^{-1}$ (see \autoref{sec_SelfmapParaHiggsBundle}). Fixing a maximal irreducible component $M_{\Hig}^{N, \frac{d}{N}}$, we can prove the following theorem:
\begin{theorem}[\autoref{Thm_SelfmapDominant}] For any $\frac{d}{N}\in J_N$, let
\[ d' = \min\left\{N \left\langle \frac{pd}{N} \right\rangle, N \left\langle \frac{p(N-d)}{N} \right\rangle\right\},\]
then the selfmap induces a rational and dominant map $\varphi: M_{\Hig}^{N, \frac{d}{N}} \dashrightarrow M_{\Hig}^{N, \frac{d'}{N}}$.
\end{theorem}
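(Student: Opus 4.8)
The plan is to follow the strategy of \cite{SYZ22}: fix an explicit birational model of $M_{\Hig}^{N,\frac{d}{N}}$, describe the selfmap on it, identify the target component, and prove dominance by a generic–finiteness computation. Write $D=\{x_1,\dots,x_m\}$ and recall $\Omega^1_{\mathbb P^1_k}(\log D)\cong\mathcal O_{\mathbb P^1_k}(m-2)$. First I would pin down the maximal component. A $k$-point of $M_{\Hig}^N$ is represented by a graded parabolic Higgs bundle $(E=L_0\oplus L_1,\theta)$ with nonzero $\theta\colon L_0\to L_1\otimes\Omega^1_{\mathbb P^1_k}(\log D)$ and parabolic weights forming the unordered pair $\{d/N,(N-d)/N\}$ at $x_1$, trivial at the other marked points; the degree-zero condition forces $\deg L_0+\deg L_1=-1$. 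Since $L_1$ is the unique $\theta$-invariant saturated line subsheaf, semistability is equivalent to the parabolic degree of $L_1$ being $\le 0$, which forces $\deg L_1\le-1$; together with $H^0(L_0^{-1}\otimes L_1\otimes\Omega^1_{\mathbb P^1_k}(\log D))\ne 0$ this leaves only $0\le\deg L_0\le(m-3)/2$, and the fibre dimension over the underlying bundle type is maximal precisely when $\deg L_0=0$ and $\deg L_1=-1$, where $[\theta]$ ranges over $\mathbb P\big(H^0(\mathbb P^1_k,\mathcal O(m-3))\big)$ modulo the grading automorphisms. Thus the maximal component I fix is birational to $\mathbb P^{m-3}$.

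Next I would compute the effect of $\mathcal C_1^{-1}$ on the dense locus $\deg L_0=0$, $\deg L_1=-1$. Using the global formula for the Frobenius pullback of parabolic $\lambda$-flat bundles recalled in the introduction and constructed in \autoref{sec_PFFandPHDF}, the relative Frobenius multiplies the weights at $x_1$ by $p$; splitting off the integral parts $\lfloor pd/N\rfloor$ and $\lfloor p(N-d)/N\rfloor$ into the underlying line bundles, the parabolic Frobenius pullback of $(\mathcal O\oplus\mathcal O(-1),\theta)$ has underlying pieces $\mathcal O(\lfloor pd/N\rfloor)$ and $\mathcal O\big(-1-\lfloor pd/N\rfloor\big)$, carrying weights $\langle pd/N\rangle$ and $\langle p(N-d)/N\rangle$, and total parabolic degree still $0$. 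Applying the parabolic inverse Cartier functor $\mathcal C_1^{-1}$ (which coincides with the functor of \cite{KrSh20} by \autoref{thm_CompareInverseCartierFunctor}) then yields a parabolic de Rham bundle $(V,\nabla)$ of parabolic degree $0$ whose underlying bundle sits in a short exact sequence $0\to\mathcal O(-1-b)\to V\to\mathcal O(b)\to 0$ with $b=\lfloor pd/N\rfloor$, the extension class depending linearly on the Frobenius twist of $\theta$. I then need two inputs: a general extension of $\mathcal O(b)$ by $\mathcal O(-1-b)$ is balanced, i.e.\ isomorphic to $\mathcal O\oplus\mathcal O(-1)$; and the (routine but necessary) check that the extension classes arising as $\theta$ varies are not all contained in the proper closed non-balanced locus, so that $V\cong\mathcal O\oplus\mathcal O(-1)$ for $\theta$ in a dense open set.

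Then I would apply the grading functor. Every line subbundle gives a Griffiths-transverse filtration of the rank-two de Rham bundle $(V,\nabla)$, so the only constraints are numerical and semistability: the grading functor of \autoref{sec_SelfmapParaHiggsBundle} selects the unique degree-$0$ line subbundle $\mathcal O\subseteq V$ (a degree-$(-1)$ choice would output an unstable graded Higgs bundle), and the associated graded is $(\mathcal O\oplus\mathcal O(-1),\theta')$ with $\theta'\colon\mathcal O\to\mathcal O(-1)\otimes\Omega^1_{\mathbb P^1_k}(\log D)$, again a point of a maximal component. The parabolic weights at $x_1$ of the two graded pieces form the unordered pair $\{\langle pd/N\rangle,\langle p(N-d)/N\rangle\}=\{r/N,(N-r)/N\}$ with $r\equiv pd\pmod{N}$; since $M_{\Hig}^{N,e/N}$ is labelled by this unordered pair — canonically by the representative $\le N/2$ — the image lies in $M_{\Hig}^{N,d'/N}$ with $d'=\min\{N\langle pd/N\rangle,N\langle p(N-d)/N\rangle\}$. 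This defines the rational map $\varphi$, whose indeterminacy locus is contained in the proper closed loci where $V$ is not balanced or the degree-$0$ subbundle $\mathcal O\subseteq V$ is horizontal for $\nabla$.

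Finally, for dominance I would pass to the birational coordinates of the first paragraph on source and target, so that $\varphi$ becomes an explicit rational self-map of $\mathbb P^{m-3}$ built from the Frobenius twist $\theta\mapsto\theta^{(p)}$ followed by the linear algebra of the exponential twist defining $\mathcal C_1^{-1}$ and the projection of $V$ onto its degree-$0$ subbundle. As in \cite{SYZ22} I would deduce dominance from generic finiteness — both moduli spaces having dimension $m-3$ — by exhibiting a point at which the differential $d\varphi$ is an isomorphism, equivalently showing that the Jacobian of the coordinate expression of $\varphi$ does not vanish identically, with the $\theta^{(p)}$-term supplying the leading contribution. I expect this last step to be the main obstacle: controlling the image of a rational self-map of $\mathbb P^{m-3}$, while keeping $V$ balanced and the parabolic weights correctly tracked through the Frobenius pullback, is where the genuine work lies, the rest reducing to bookkeeping once the conventions of \autoref{sec_PFFandPHDF} and \autoref{sec_SelfmapParaHiggsBundle} are in place.
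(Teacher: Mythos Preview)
Your proposal takes a genuinely different route from the paper. You aim for an explicit coordinate description of $\varphi$ on a birational model $\mathbb{P}^{m-3}$ and then a generic-finiteness/Jacobian computation, whereas the paper argues by a short induction on the number $m$ of marked points. The base case $m=3$ is trivial (both moduli components are points). For the inductive step the paper uses the closed embedding $M(\hat x_m)\hookrightarrow M_{\Hig}^{N,\frac{d}{N}}$ obtained by allowing only $m-1$ poles (``forgetting $x_m$''), observes that the selfmap is compatible with this embedding on source and target, and finishes by a dimension count: if $Z:=\overline{\mathrm{Im}\,\varphi}\subsetneq\mathbb{P}^{m-3}$ then $Z$ is irreducible of dimension $\le m-4$, yet by the inductive hypothesis $Z$ already contains $\overline{\varphi(M(\hat x_m))}\cong\mathbb{P}^{m-4}$ as a \emph{proper} subvariety --- contradiction.

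The paper's argument completely sidesteps the two places you yourself flagged as the main obstacles. It never needs to verify that the extension class produced by $\mathcal C_1^{-1}$ is generically balanced (i.e.\ that $U_{\frac{d'}{N}}$ is nonempty and large), nor does it touch any Jacobian. Your approach would in principle yield more --- an explicit description of $\varphi$ in coordinates --- but the gaps you note are real and not ``routine'': showing that the family of extension classes misses the non-balanced locus, and that the resulting rational self-map of $\mathbb{P}^{m-3}$ has nonvanishing Jacobian somewhere, are each essentially restatements of dominance itself, and you give no mechanism for carrying them out. The induction-on-$m$ device is worth internalizing: it replaces a hard global computation over $\mathbb{P}^{m-3}$ by a tautology at $m=3$ together with a one-line dimension argument.
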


When $d = d'$, with the same method as in \cite[Theorem 4.4]{SYZ22}, we can prove the following corollary which enables us to construct infinitely many parabolic Fontaine-Faltings modules, hence infinitely many parabolic crystalline representations of $\pi_1^{\et}(\mathbb P_K^1-\{0,1,\lambda,\infty\})$.

\begin{corollary}[\autoref{thm_PeriodicPointsDense}]
For any $\frac{d}{N}\in J_N$, let $k$ be the smallest positive integer such that
\[\left\{\left\langle\frac{p^kd}{N}\right\rangle, \left\langle\frac{p^k(N-d)}{N}\right\rangle\right\}=\left\{\left\langle\frac{d}{N}\right\rangle, \left\langle\frac{N-d}{N}\right\rangle\right\}.\]
 the set of periodic points of $\varphi^k$ is Zariski dense in $M_{\Hig}^{N, \frac{d}{N}}$.
\end{corollary}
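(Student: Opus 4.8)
The plan is to first reduce the corollary to the statement that $\varphi^k$ is a dominant rational selfmap of the irreducible quasi-projective variety $M_{\Hig}^{N,\frac{d}{N}}$, and then to run the finite-field dynamical argument of \cite[Theorem 4.4]{SYZ22} in the present parabolic situation. For the reduction, note that by \autoref{Thm_SelfmapDominant} the selfmap carries $M_{\Hig}^{N,\frac{d}{N}}$ dominantly and rationally onto $M_{\Hig}^{N,\frac{d'}{N}}$, where the degree evolves by $d\mapsto N\min\{\langle\frac{pd}{N}\rangle,\langle\frac{p(N-d)}{N}\rangle\}$. Iterating from $d_0=d$ and writing $r_j = p^j d\bmod N$, a short induction (using $\gcd(d,N)=1$, so $N\nmid p^jd$, and $\langle\frac{p(N-d_j)}{N}\rangle = 1-\langle\frac{pd_j}{N}\rangle$) gives $\{d_j,\,N-d_j\} = \{r_j,\,N-r_j\}$ for all $j$; hence the hypothesis on $k$ says precisely that $d_k\in\{d,\,N-d\}$, i.e.\ (possibly after the identification of $M_{\Hig}^{N,\frac{d}{N}}$ with $M_{\Hig}^{N,\frac{N-d}{N}}$ interchanging the two parabolic weights) that $\varphi^k$ sends $M_{\Hig}^{N,\frac{d}{N}}$ back to itself. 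Since a composite of dominant rational maps is again dominant and the source and target now have the same dimension, $\varphi^k\colon M_{\Hig}^{N,\frac{d}{N}}\dashrightarrow M_{\Hig}^{N,\frac{d}{N}}$ is a dominant, generically finite rational selfmap of an irreducible variety.

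Next I would descend everything to a finite field. The marked points $0,1,\lambda,\infty$, the numerical data, the grading functor and the parabolic inverse Cartier functor $\mathcal C_1^{-1}$ all descend to a finite subfield $\mathbb F_q\subseteq k$; here one should choose $q$ large enough that $\lambda\in\mathbb F_q$ and, crucially, that the $p$-power motion of the marked point $\lambda$ introduced at each step of $\varphi$ is undone after $k$ steps (so, after enlarging, $\lambda\in\mathbb F_{p^{k}}$), so that $\varphi^k$ is genuinely a self-correspondence of the single $\mathbb F_q$-variety $M^{N,\frac{d}{N}}_{\Hig,\mathbb F_q}$. Because $\mathcal C_1^{-1}$ is built from the absolute Frobenius pullback on $\mathbb P^1_k$, the $k$-fold composite $\varphi^k$ is intertwined with the $q$-power Frobenius: on the locus where both sides are defined, $\varphi^k\circ\mathrm{Fr}_q = \mathrm{Fr}_q\circ\varphi^k$. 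Let $\Gamma\subseteq M_{\Hig}^{N,\frac{d}{N}}\times M_{\Hig}^{N,\frac{d}{N}}$ be the Zariski closure of the graph of $\varphi^k$: it is absolutely irreducible, the first projection is birational, and the second projection is dominant, hence generically finite. Hrushovski's theorem on the intersection of such a correspondence with the graphs of the powers of Frobenius then yields, for all sufficiently divisible $m$, a nonempty set $\Sigma_m\coloneqq\{x\in M_{\Hig}^{N,\frac{d}{N}}(k) : \varphi^k(x) = \mathrm{Fr}_{q^m}(x)\}$ consisting of points in the domain of definition of $\varphi^k$, whose union over $m$ is Zariski dense in $M_{\Hig}^{N,\frac{d}{N}}$.

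It remains to see that each $x\in\Sigma_m$ is an honest periodic point of $\varphi^k$. The domain of definition of $\varphi^k$ is defined over $\mathbb F_q$, hence stable under $\Gal(k/\mathbb F_q)$, and $\varphi^k(x) = \mathrm{Fr}_{q^m}(x)$ is a Frobenius conjugate of $x$; so the whole forward $\varphi^k$-orbit of $x$ stays in that domain, and using $\varphi^k\circ\mathrm{Fr}_{q^m} = \mathrm{Fr}_{q^m}\circ\varphi^k$ an induction gives $\varphi^{kj}(x) = \mathrm{Fr}_{q^{mj}}(x)$ for all $j\ge 1$. Taking $j$ a multiple of $[\kappa(x):\mathbb F_q]$ makes $\mathrm{Fr}_{q^{mj}}$ fix $x$, so $\varphi^{kj}(x) = x$ and $x$ is periodic. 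Combined with the density of $\bigcup_m\Sigma_m$, the periodic points of $\varphi^k$ are Zariski dense in $M_{\Hig}^{N,\frac{d}{N}}$; through \autoref{thm_equFunctorHdRF&FFMod} each of them produces a periodic parabolic Higgs-de Rham flow, hence a parabolic Fontaine-Faltings module with the appropriate endomorphism structure and a parabolic crystalline representation of $\pi_1^{\et}(\mathbb P^1_K - \{0,1,\lambda,\infty\})$.

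The step I expect to be the main obstacle is the second paragraph: making the Frobenius-semilinearity of $\varphi$ precise, in particular controlling the $p$-power motion of $\lambda$ so that $\varphi^k$ really is a self-correspondence over a fixed finite field which commutes with $\mathrm{Fr}_q$ there, and verifying the hypotheses of Hrushovski's theorem (absolute irreducibility of $\Gamma$ and generic finiteness of the relevant projection) for these parabolic moduli — in effect transporting the argument of \cite[Theorem 4.4]{SYZ22} through the parabolic constructions developed earlier in the paper.
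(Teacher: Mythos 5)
Your proposal is correct and follows essentially the same route as the paper, which itself only sketches the argument: deduce from \autoref{Thm_SelfmapDominant} and the choice of $k$ that $\varphi^k$ is a dominant (hence generically finite) rational selfmap of $M_{\Hig}^{N,\frac{d}{N}}$, and then run the method of \cite[Theorem 4.4]{SYZ22} via Hrushovski's theorem \cite[Corollary 1.2]{Hru04} to get a dense set of points with $\varphi^k(x)=\mathrm{Fr}_{q^m}(x)$, which are periodic since Frobenius eventually fixes points with coordinates in a finite field. The one worry you single out is largely moot: in this paper $\mathcal C_1^{-1}$ is a functor on the fixed log pair $(\mathbb P^1_k,\,x_1+\cdots+x_m)$ (built from the absolute Frobenius and a fixed $W_2$-lifting), so $\varphi$ is already a morphism of $k$-schemes between moduli spaces with the same marked points and no condition like $\lambda\in\mathbb F_{p^k}$ is needed --- one only has to choose the marked points, the $W_2$-lifting and hence $\varphi^k$ to be defined over some finite subfield $\mathbb F_q$ before invoking Hrushovski.
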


In 2018, motivated by Kontsevich’s program on Langlands correspondence, Kang Zuo proposed the concepts \emph{arithmetic Higgs bundles}. Let $(Y,D_Y)$ be a log smooth pair over a number field $K$. A Higgs bundle $(E,\theta)$ over $(Y,D_Y)/K$ is \emph{arithmetic} if  there is a positive integer $f$ such that for almost all finite place $\mathfrak p$ of $K$, the modulo $ \mathfrak p$ reduction of $(E,\theta)$ is a term of a periodic Higgs-de Rham flow whose period $\leq f$. Kang Zuo conjectured that

\begin{conjecture}[Zuo 2018]
    A Higgs bundle over $(Y,D_Y)/K$ is arithmetic if and only if it is motivic.
\end{conjecture}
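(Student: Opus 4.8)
Since the final statement is a conjecture, what follows is a plan of attack rather than a proof: the parabolic theory developed in this paper settles one direction essentially in full and supplies the first step of the other. The plan is to separate the two implications, which are of entirely different natures.

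For \emph{motivic $\Rightarrow$ arithmetic}, assume $(E,\theta)$ over $(Y,D_Y)/K$ underlies a polarized $\mathbb Q$-variation of Hodge structure of geometric origin, say a direct summand of $R^w h_*\mathbb Q$ for a smooth projective family $h$ with semistable (logarithmic) degeneration along $D_Y$. First I would spread $h$, $Y$ and $D_Y$ out over $\mathcal O_K[1/N]$. Then, for a place $\mathfrak p$ of residue characteristic $p \gg 0$, I would invoke Faltings' comparison between crystalline and de Rham cohomology, in its relative and logarithmic form, to endow the de Rham realization $(\mathcal E_{\dR}, \nabla, \Fil)$ over $W_n$ with the structure of a logarithmic Fontaine--Faltings module, and then pass to parabolic coefficients using the parabolic Frobenius pullback functor of \autoref{sec_PFFandPHDF}, obtaining a parabolic Fontaine--Faltings module with endomorphism structure. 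By \autoref{thm_equFunctorHdRF&FFMod} its associated graded Higgs bundle, which is exactly the mod $\mathfrak p$ reduction of $(E,\theta)$, is a term of an $f$-periodic parabolic Higgs--de Rham flow, the period $f$ being controlled by the coefficient (endomorphism) structure and hence bounded independently of $\mathfrak p$. As this holds for almost all $\mathfrak p$, the bundle is arithmetic; the parabolic machinery is precisely what makes the argument run uniformly across the boundary $D_Y$.

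For \emph{arithmetic $\Rightarrow$ motivic}, the first half is now in hand: given an arithmetic $(E,\theta)$, for almost all $\mathfrak p$ its reduction is $f$-periodic, so \autoref{thm_equFunctorHdRF&FFMod} yields a parabolic Fontaine--Faltings module with $W_n(\mathbb F_{p^f})$-endomorphism structure and then \autoref{thm_ParabolicDFunctor} produces a crystalline $\mathbb Z_p$-representation $\rho_{\mathfrak p}$ of $\pi_1^{\et}$ of the open generic fiber. One then has to (i) assemble the $\rho_{\mathfrak p}$ into a strictly compatible system of $\lambda$-adic representations of an arithmetic fundamental group of $U_Y$, using that the Hodge and Newton data are read off from $(E,\theta)$ itself and are therefore independent of $\mathfrak p$, together with a Chebotarev and $\ell$-independence argument for the Frobenius characteristic polynomials; and (ii) realize that compatible system by a genuine family of motives over $U_Y$ whose Hodge filtration specializes back to $(E,\theta)$.

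Step (ii) is the main obstacle, and it has the same depth as the Fontaine--Mazur conjecture and Simpson's conjecture that rigid local systems are of geometric origin: no general mechanism is known for producing a variety, or a motive, from a compatible system of crystalline representations. A plausible partial result would assume a rigidity hypothesis on $(E,\theta)$; when the underlying local system is cohomologically rigid one can hope to combine the integrality and geometricity techniques of Esnault--Groechenig with the companions formalism relating $\ell$-adic and $p$-adic coefficients (Abe, Drinfeld, Kedlaya) to locate a motivic incarnation, after which the parabolic Higgs--de Rham flow theory of this paper identifies the de Rham side of that incarnation with $(E,\theta)$. In full generality over a number field I would expect the conjecture to remain open with present techniques, so it is best regarded as a guiding principle whose accessible half --- motivic $\Rightarrow$ arithmetic --- is exactly what the parabolic theory here makes rigorous.
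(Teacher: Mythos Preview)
The statement is a \emph{conjecture}, and the paper does not prove it; it is stated as an open problem attributed to Zuo (2018), with surrounding discussion of partial evidence. Your proposal correctly recognizes this and offers a plan rather than a proof, which is the only honest thing one can do here.

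Your analysis of the two directions is broadly in line with the paper's own commentary. For motivic $\Rightarrow$ arithmetic, the paper notes that Krishnamoorthy and Sheng \cite{KrSh20} already proved the closely related statement that motivicity implies periodicity (for Higgs bundles on complex curves), so this direction is indeed the accessible one; your sketch via Faltings' comparison and the parabolic Fontaine--Faltings machinery is a reasonable outline of how one would approach it in the arithmetic setting, though you should be careful that ``arithmetic'' in the paper's sense requires bounded periodicity over a number field $K$ at almost all places, not just periodicity over a finitely generated $\mathbb Z$-algebra. For arithmetic $\Rightarrow$ motivic, your assessment that this has the depth of Fontaine--Mazur and Simpson's motivicity conjecture is accurate and matches the spirit of the paper's discussion, which cites the Lam--Litt/Yang--Zuo result (\autoref{thm_Motivicity}) as a very special case where motivicity can be characterized explicitly.

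One small overstatement: you say the parabolic theory in this paper ``settles one direction essentially in full.'' The paper does not claim this; the result it cites for that direction is \cite[Theorem 1.3]{KrSh20}, and the present paper's contribution is the parabolic Fontaine--Faltings and Higgs--de Rham formalism over $W_n$, not a new proof of motivic $\Rightarrow$ arithmetic in full generality.
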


Building on this framework, Krishnamoorthy and Sheng later introduced the notion of periodic Higgs bundles on complex curve in \cite{KrSh20}. A Higgs bundles $(E,\theta)$ over a complex curve $C$ is periodic if it has a spreading out $\mathscr{(E,\theta)}$ over a finite generated $\mathbb Z$-subalgebra of $\mathbb C$ such that  $ \mathscr{(E,\theta)}\otimes \Spec \overline{\kappa (\mathfrak p)}  $ is a term of a periodic Higgs-de Rham flow of bounded periodicity for almost all closed points $\mathfrak p \in \Spec A$. They proved that motivicity implies periodicity (See \cite[Theorem 1.3]{KrSh20}).

Subsequently, in \cite{LaLi23}, Lam and Litt proved a conjecture of Sun, Yang and Zuo (\cite[Conjecture 4.8]{SYZ22}),
which corresponds a class of periodic parabolic Higgs bundles on $\mathbb P^1_{\mathbb C}$ with four marked points $\{0,1,\lambda,\infty\}$ to the $p$-torsion points of the elliptic curve
\[C_\lambda: y^2=x(x-1)(x-\lambda).\]
Combining with a theorem of Yang and Zuo in \cite{YaZu23a}, one obtained a criterion for the motivicity of a class of parabolic Higgs bundles on $\mathbb P^1_{\mathbb C}$.

\begin{theorem}[{\cite[Theorem 1.6]{YaZu23a},\cite[Theorem 1.2.2]{LaLi23}}]\label{thm_Motivicity}
A semistable parabolic graded Higgs bundle of degree $0$ on the $\mathbb P^1_{\mathbb C}$ with $4$-punctures ${0,1,\lambda,\infty}$ whose parabolic weights are $\frac{1}{2}$ at $\infty$ and $0$ at $0,1,\lambda$ if and only if
the zero of the Higgs field  is the $x$-coordinate of a  torsion point in $C_\lambda$.
\end{theorem}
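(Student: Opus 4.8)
The displayed statement should be read as ``$(E,\theta)$ is motivic if and only if the zero $x_0$ of its Higgs field is the $x$-coordinate of a torsion point of $C_\lambda$,'' and the plan is to deduce it by proving the two implications separately, since they rest on different mechanisms. Recall first, from \cite{SYZ22}, that a semistable graded parabolic Higgs bundle of rank $2$ and degree $0$ on $\mathbb P^1$ with weights $\tfrac12$ at $\infty$ and $0$ at $0,1,\lambda$ is rigid: its isomorphism class is pinned down by the single point $x_0\in\mathbb P^1\setminus\{0,1,\lambda,\infty\}$, and under pullback along the double cover $\pi\colon C_\lambda\to\mathbb P^1$, $y^2=x(x-1)(x-\lambda)$, followed by descent to $C_\lambda/\{\pm1\}\cong\mathbb P^1$, the Higgs--de Rham flow self-map on this moduli space is identified with the endomorphism induced by multiplication by $p$ on $C_\lambda$. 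This identification drives both directions.

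For the implication from torsion to motivic I would run the construction of \cite{YaZu23a}. Given a point $P\in C_\lambda$ of order $n$ with $x(P)=x_0$, one uses the $n$-torsion datum to build a cyclic cover — of the base $\mathbb P^1\setminus\{0,1,\lambda,\infty\}$, or equivalently a family of cyclic covers of the relevant genus-one curves — producing a smooth family $g\colon\mathcal X\to\mathbb P^1\setminus\{0,1,\lambda,\infty\}$ whose relative first cohomology decomposes under the covering $\mu_n$-action, and such that one primitive isotypic summand is a polarized $\mathbb Z$-variation of Hodge structure whose associated graded parabolic Higgs bundle is precisely $(E,\theta)$. The only non-formal point is to check that the local monodromies, hence the parabolic weights at $0,1,\lambda,\infty$, come out as prescribed; this is a ramification computation in the tower. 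Spreading $g$ out over a ring of integers then exhibits the flat bundle underlying $(E,\theta)$ as a Gauss--Manin subquotient, i.e. as motivic.

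For the reverse implication, a motivic parabolic Higgs bundle is periodic in the sense of Krishnamoorthy--Sheng \cite{KrSh20}: its spreading-out reduces, modulo almost every prime $\mathfrak p$, to a term of an $f$-periodic parabolic Higgs--de Rham flow with $f$ independent of $\mathfrak p$. This step uses the Frobenius structure on the crystalline/Gauss--Manin realization together with the parabolic equivalence between periodic Higgs--de Rham flows and parabolic Fontaine--Faltings modules established in this paper. By \cite[Theorem 1.2.2]{LaLi23}, which resolves \cite[Conjecture 4.8]{SYZ22}, the periodic parabolic Higgs bundles on $(\mathbb P^1,\{0,1,\lambda,\infty\})$ with these weights are exactly those whose $x_0$ is the $x$-coordinate of a torsion point of $C_\lambda$: modulo $\mathfrak p$, periodicity of the flow says the reduction of $x_0$ is a periodic point of multiplication by $p$ on $C_\lambda$ modulo $\pm1$, hence the $x$-coordinate of a torsion point in characteristic $p$, and the content of \cite{LaLi23} is the rigidity input that promotes ``torsion of bounded order modulo almost every $\mathfrak p$'' to genuine torsion in characteristic zero. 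Combining the two implications yields the asserted equivalence.

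The main obstacle is this last direction, and precisely the part carried out in \cite{LaLi23}. One must first ensure that motivicity delivers periodicity with a \emph{single} bounded period $f$, so that their theorem applies uniformly across $\mathfrak p$ — this is where the bounded Frobenius structure of a geometric local system is essential. The deeper difficulty is then arithmetic: writing $P_0$ for a lift of $x_0$ to $C_\lambda(\overline{\mathbb Q})$, one has to show that $(p^f\mp1)P_0$ lying in the kernel of reduction at a place above $p$ for infinitely many $p$ forces $P_0$ to be torsion, which is not accessible by a naive canonical-height estimate and is the heart of \cite{LaLi23}. By comparison, the torsion-implies-motivic direction is constructive, and its only genuine computation — the weight bookkeeping for the cyclic cover — is routine.
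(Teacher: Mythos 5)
The first thing to say is that the paper does not prove this statement at all: it is imported verbatim as a citation, combining \cite[Theorem 1.6]{YaZu23a} with \cite[Theorem 1.2.2]{LaLi23}, so there is no internal proof to measure your proposal against. Read that way, your proposal is consistent with the paper's treatment — you correctly repair the garbled statement (inserting ``is motivic''), split the equivalence into ``torsion $\Rightarrow$ motivic'' (Yang--Zuo's construction) and ``motivic $\Rightarrow$ torsion'' (motivic $\Rightarrow$ periodic via Krishnamoorthy--Sheng, then the resolved \cite[Conjecture 4.8]{SYZ22}), and you honestly flag that the hard content sits inside the cited works. As an account of how the two references combine, this matches the logical role the theorem plays in the paper.

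However, as a \emph{proof} it is only a citation-level outline, and two attributions are off. First, your opening paragraph asserts, ``from \cite{SYZ22}'', that the Higgs--de Rham self-map on this moduli space is identified with multiplication by $p$ on $C_\lambda$ modulo $\pm 1$; in \cite{SYZ22} this identification is precisely the conjecture (Conjecture 4.8), and it is part of what Lam--Litt proved, so you cannot take it as an input available before invoking \cite{LaLi23} — doing so makes the ``reverse implication'' look more elementary than it is. Second, your description of ``the heart of \cite{LaLi23}'' (promoting ``torsion of bounded order modulo almost every $\mathfrak p$'' to genuine torsion in characteristic zero) is a reconstruction of a possible strategy, not what that paper does: their Theorem 1.2.2 is a characteristic-zero classification of geometric rank-$2$ local systems on $\mathbb P^1$ minus four points, and the torsion/periodicity dictionary is a consequence rather than the engine. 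Neither slip breaks the logical skeleton, since you use both references as black boxes, but it means the only parts of your argument that are not outsourced (the weight bookkeeping for the cyclic cover, and the uniformity of the period $f$) are exactly the parts you do not carry out; also note that the zero $x_0$ may land on $\{0,1,\lambda,\infty\}$ (the $2$-torsion $x$-coordinates), so restricting $x_0$ to the complement of the punctures quietly discards part of the statement.
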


Let $K$ be a number field, Denote $M_{\Hig,K}^{N,\frac{d}{N}}$ the moduli space of semistable graded parabolic Higgs bundles of rank $2$ degree 0 over $\mathbb{P}^1_K$ such that the Higgs fields have $m$ poles $\{x_1, x_2, \ldots, x_m\}$ ($m\geq 4$) and the parabolic weights are in the set $J_N$ at $x_1$ and 0 at other points. Then it is natural to ask the following questions:
\begin{question}
    \begin{enumerate}
        \item How to determine the periodic points in $M_{\Hig}^{N,\frac{d}{N}}$ for any $\frac{d}{N}\in J_N$?
        \item How to determine the arithmetic points in $M_{\Hig,K}^{N,\frac{d}{N}}$ for any $\frac{d}{N}\in J_N$?
        \item Is there a criterion for the motivicity of a  semistable parabolic graded Higgs bundle of degree $0$ on the $\mathbb P^1_{\mathbb C}$ with $m$-punctures ($m\geq 4$) whose parabolic weights are contained in $J_N$ at $x_1$ and $0$ at other points,  that is analogous to \autoref{thm_Motivicity}?
        \item More generally, how to determine the periodicity,  arithmeticity and motivicity of Higgs bundles on any log smooth pair?
    \end{enumerate}
\end{question}

This paper is organized as follows. In \autoref{sec_main_para}, we review some basic facts about parabolic bundles. Inspired by the work of Iyer and Simpson \cite{IySi07}, we introduce the notion of pullback for parabolic $\lambda$-flat bundles. Through comparison with the pullback functor in \cite{KrSh20}, we show that this notion generalizes the functor constructed therein (see \autoref{thm_ComparePullbackDeRhamBundle}).

\autoref{sec_Para_FF} is dedicated to establishing the theory of parabolic Fontaine-Faltings modules. We first review the work of Faltings on the theory of Fontaine-Faltings modules. Subsequently, we construct the parabolic Faltings' tilde functor $\widetilde{(\cdot)}$ and the parabolic Frobenius pullback functor $\mathcal F_n$ over the truncated ring $W_n(k)$. Then we introduce the notions of parabolic Fontaine-Faltings modules and construct the $\mathbb D^{par}$-functor for parabolic strict $p^n$-torsion Fontaine-Faltings modules (\autoref{thm_ParabolicDFunctor}). The objects in the essential image of $\mathbb D^{par}$ are called   parabolic crystalline representations.

In \autoref{sec_Para_Higgs_de_Rham}, we generalizes the theory of  Higgs-de Rham flows in \cite{LSZ19} to parabolic setting. We first review the work of Lan, Sheng, and Zuo on the theory of Higgs-de Rham flows. Subsequently, we construct the  parabolic inverse Cartier functor and as an application, we introduce the notions of parabolic Higgs-de Rham flows and establishing an equivalence of categories between the category of $f$-periodic parabolic Higgs-de Rham flows and the category of parabolic strict $p^n$-torsion Fontaine-Faltings modules with $W_n(\mathbb F_{p^f})$-endomorphism structure(\autoref{thm_equFunctorHdRF&FFMod}). In the case $n=1$, our construction coincides with the work of Krishnamoorthy and Sheng in \cite{KrSh20} (see \autoref{thm_CompareInverseCartierFunctor}).

In \autoref{sec_App}, we apply the theory of parabolic Higgs-de Rham flows to study a special selfmap. This selfmap acts on the set of $k$-points of the moduli space $M_{\Hig}^N$. The moduli space $M_{\Hig}^N$ classifies semistable, rank-$2$, degree $0$ parabolic Higgs bundles on an $m$-marked projective line $\mathbb{P}_k^1$ with non-zero parabolic weights at a single point, and these weights are located in $(0,1) \cap \frac{1}{N}\mathbb{Z}$. \autoref{thm_PeriodicPointsDense} implies that the periodic points are Zariski dense in the moduli space, enabling the construction of infinitely many parabolic crystalline representations. When $m=4$, we enumerate all families of elliptic curves with four singular fibers, which induce 1-periodic parabolic Higgs bundles on $\mathbb{P}_k^1$ with four marked points, as detailed in \autoref{Table1}.

\subsection*{Acknowledgments}
J.Y. and Z.L. acknowledge the financial support from the National Natural Science Foundation of China (Grant No. 12201595), the Fundamental Research Funds for the Central Universities, and the CAS Project for Young Scientists in Basic Research (Grant No. YSBR-032). K.Z. and J.Y. gratefully acknowledge the financial support from the Key Program (Grant No. 12331002) and the International Collaboration Fund (Grant No. W2441003) of the National Natural Science Foundation of China.


\section{\bf Parabolic Structures} \label{sec_main_para}
In this section, we undertake a study of de Rham bundles and Higgs bundles equipped with parabolic structures. We adopt the terminology and notation for parabolic structures established in \cite{IySi07}; see also \cite{KrSh20}.

\subsection{Introduction to parabolic vector bundles}

Our intention is not to introduce parabolic objects on arbitrary spaces. Although the methods and results remain applicable to a broader range of scenarios, for the sake of brevity, we restrict our focus to the following specific spaces $(Y, D_Y)/S$.

Let $p$ be an odd prime number, and let $S$ be a connected Noetherian (formal) scheme. For a smooth separated scheme $h:Y\to S$ (or a smooth separated formal scheme over $S$ if $S$ is a formal scheme), let $D_Y\subseteq Y$ be a relative normal crossing divisor with irreducible irreducible components $D_{Y, i}$, $i=1, 2, \cdots, n$. We denote $U_Y := Y - D_Y$ and by $j_Y$ the open immersion $j_Y\colon U_Y \hookrightarrow Y$.

We set $\Omega_{Y/S}^1$ to be the sheaf of relative $1$-forms and $\Omega_{Y/S}^1(\log D_Y)$ to be the sheaf of relative $1$-forms with logarithmic poles along $D_Y$. By the smoothness of $Y$ over $S$, both of these sheaves are vector bundles over $Y$.

\subsubsection{Parabolic vector bundles}

In this section, we introduce the concept of parabolic vector bundles, which is based on \cite{IySi07} and \cite{Mar92}.

When $S=\Spec K$ with $K$ a field, we recall in \cite[Section 2]{IySi07} that a parabolic sheaf $F$ on $(Y, D_Y)/K$ is a collection of torsion-free sheaves $F_{\alpha}$ indexed by multi-indices $\alpha=(\alpha_1,\alpha_2,\cdots,\alpha_n)\in \mathbb Q^n$  with inclusions
of sheaves of $\mathcal O_X$-modules $F_\alpha\hookrightarrow F_\beta $ whenever $\alpha_i\leq \beta_i$ for $1\leq i\leq n$ (a condition which we write as $\alpha\leq \beta$ in what follows), subject
to the following hypotheses:
\begin{itemize}
\item (Normalization) Let $\delta^i=(0, \cdots, 1, \cdots, 0)\in\mathbb{Q}^n$, then $F_{\alpha+\delta^i} = F_\alpha(D_{Y, i})$ (compatibly with the inclusion);
\item (Semicontinuity) For any given $\alpha$, there exists a constant $c=c(\alpha)>0$ such that $F_{\alpha+\epsilon} = F_\alpha$ for $\epsilon=(\epsilon_1, \cdots, \epsilon_n)\in\mathbb{Q}^n$ with $0 \leq \epsilon_i \leq c$.
\item (Rationality) For any given $\alpha$, $\inf\{\beta_i:F_\beta=F_\alpha\}$ is rational for all $i$.\footnote{We add this condition to make sure that all parabolic weights are in $\mathbb Q$.}
\end{itemize}
The set of \emph{parabolic weights}  of a parabolic sheaf $F$ along $D_{Y,i}$ is the set of rational numbers $\alpha\in [0,1)$ such that
 $$F_{-(\alpha+\epsilon)\delta^i}\neq F_{-\alpha\delta^i}$$ for any $\epsilon>0$. A \emph{morphism between two parabolic sheaves} from $F$ to $F'$ is a collection of compatible morphisms of sheaves $f_\alpha\colon F_\alpha \rightarrow F'_\alpha$.
\begin{remark}
\begin{enumerate}
\item[1).] The first hypothesis implies that the quotient sheaves $F_\alpha/F_\beta$ for $\beta \leq \alpha$ are supported at $D_Y$.
\item[2).] The second and third hypotheses imply that the structure is determined by the sheaves $F_\alpha$ for a finite collection of rational indices $\alpha$ with $0 \leq \alpha_i < 1$.
\item[3).] The extension $j_{Y*}\left( j_Y^* F_\beta\right)$ does not depend on the choice of $\beta \in \mathbb{Q}^n$, denoted by $F_\infty$. Note that the $F_\alpha$ may all be considered as subsheaves of $F_\infty$.
\end{enumerate}
\end{remark}

Let $F$ and $F'$ be two parabolic sheaves on $(Y,D_Y)/K$, in  \cite[page 352]{IySi07},  Iyer-Simpson defined the \emph{tensor product} $F\otimes F'$ of $F$ and $F'$ by setting
\[(F \otimes F')_{\alpha} := \sum_{\beta + \gamma = \alpha} F_\beta \otimes F_\gamma \subset F_\infty \otimes F'_\infty.\]
Then $F \otimes F' := \{(F \otimes F')_{\alpha}\}$ forms a parabolic sheaf.

More generally, when $S$ is not the spectrum of a field, Maruyama-Yokogaw introduced the notion of \emph{flat family of paraboilc sheaves} in \cite[page 7]{Mar92}, which we refer as \emph{parabolic sheaf on $(Y,D_Y)/S$} in this paper as follows.
\begin{definition}
    A \emph{parabolic sheaf $F$ on $(Y,D_Y)/S$} is  is a collection of \emph{$h$-torsion-free}
\footnote{A coherent $\mathcal O_Y$-module $F$ is $h$-torsion-free if it is $S$-flat, and for every geometric fiber $Y_s$ of $h$, $F\otimes \mathcal O_{Y_s}$ is a torsion-free $\mathcal O_{Y_s}$-module. See also \cite[Definition 1.13]{Mar92}.}
sheaves $F_{\alpha}$ indexed by multi-indices $\alpha=(\alpha_1,\alpha_2,\cdots,\alpha_n)\in \mathbb Q^n$  with inclusions
of sheaves of $\mathcal O_X$-modules $F_\alpha\hookrightarrow F_\beta $ whose cokernel is  $S$-flat, whenever $\alpha\leq \beta$, subject to the same three hypotheses as when $S=\Spec K$.
\end{definition}
\begin{remark}
     The conceptions of parabolic weights, morphisms between parabolic sheaves, tensor product can be extended to the general case without difficulty.
\end{remark}
For any $\alpha = (\alpha_1, \cdots, \alpha_n) \in \mathbb{Q}^n$, denote
\[\alpha D_Y = \alpha_1D_{Y, 1} + \cdots + \alpha_n D_{Y, n}\]
which is a rational divisor supported on $D_Y$. All rational divisor supported on $D_Y$ are of this form. Denote
\[\lfloor\alpha\rfloor \coloneqq (\lfloor\alpha_1\rfloor, \lfloor\alpha_2\rfloor, \cdots, \lfloor\alpha_n\rfloor)\]
where $\lfloor\alpha_i\rfloor$ is the maximal integer smaller than or equal to $\alpha_i$. In particular, $\lfloor\alpha\rfloor D_Y$ is an integral divisor supported on $D_Y$.

\begin{example}[Trivial Parabolic Structure]\label{TriParaBun} For any $h$-torsion-free sheaf $E$ on $Y$, it may be considered as a parabolic sheaf (we say ``with trivial parabolic structure'') by setting
\[E_\alpha = E(\lfloor\alpha\rfloor D).\]
Sometimes, we will identify the usual sheaf $E$ with the parabolic sheaf $\{E_\alpha\}$ defined in this way.
\end{example}

\begin{example}\label{exa:paraLineBund} Let $L$ be a line bundle and let $\gamma=(\gamma_1, \cdots, \gamma_n) \in \mathbb{Q}^n$ be a rational multi-index. Then $\{L(\lfloor\alpha + \gamma\rfloor D_Y)\}_\alpha$ forms a parabolic sheaf, which is denoted by $L(\gamma D_Y)$.
Clearly, for any two line bundles $L, L'$ and two multi-indices $\gamma, \gamma'\in\mathbb{Q}^n$, one has
\[L(\gamma D)\otimes L'(\gamma' D') = (L\otimes L')\Big((\gamma +\gamma') D\Big).\]
Then the set of all isomorphic classes of parabolic line bundles forms an abelian group under the tensor product, which contains the Picard group of $Y$ as a subgroup in the natural way.
\end{example}

\begin{definition}[{\cite[Definition 2.2]{IySi07}}]
\begin{itemize}
\item[1).] The parabolic sheaves appearing in \autoref{exa:paraLineBund} are called \emph{parabolic line bundles}.
\item[2).] A \emph{parabolic vector bundle} is a parabolic sheaf which is locally isomorphic to a direct sum of parabolic line bundles. (Simpson called this a \emph{locally abelian parabolic vector bundle}.)
A \emph{morphism between two parabolic vector bundles} is a morphism between their underlying parabolic sheaves.
\end{itemize}
\end{definition}

\subsubsection{Degrees and semistability}
In order to define the degree and the semistability of a parabolic vector bundle, we assume that $h:Y\to S$ is projective and fix a $h$-relative ample divisor $\mathcal O(1)$ on $Y$.

Let $F$ be a coherent sheaf on $Y$ which is flat over $S$. By the local constancy of Hilbert polynomials(see \cite[III 9.9]{Har77}), the Hilbert polynomial of the coherent sheaf $F_s$ on $Y_s$ for any points $s\in S$ does not depend on the choice of the point $s$.
It is well-defined to set
\[\deg(F)\coloneqq \deg (F_s).\footnote{Let $\alpha_d(\mathcal O_{Y_s})$ and $ \alpha_d(F_s)$ be the leading coefficients of the Hilbert polynomials of $\mathcal O_{Y_s}$ and $F_s$, then the degree of $F_s$ is defined to be $\alpha_d(F_s)/\alpha_d(\mathcal O_{Y_s})$. See also \cite[Definition 1.2.2]{HuLe10}.}\]
We define the degree of a parabolic vector bundle as follows.
\begin{definition} Let $F$ be a parabolic sheaf over $(Y, D_Y)/S$.
\begin{itemize}
\item For any $t\in \mathbb R$, set $$\deg_t (F)\coloneqq\inf \{\deg (F_{\alpha})\mid \alpha\geq (t,t,\cdots,t)\}.$$ It is a piecewise constant function of $t$ with finite jump sets in $[0,1)$.
The \emph{degree of $F$} is defined as
\[\deg(F) \coloneqq \int_0^1 \deg_t(F)\rmd t.\]
\item The parabolic vector bundle $F$ is called \emph{semistable} (resp. \emph{stable}) if for any proper sub parabolic sheaf $F'\subsetneq F$, one has
\[\frac{\deg(F')}{\rank(F')} \leq \frac{\deg(F)}{\rank(F)} \qquad \text{\Big(resp.} \frac{\deg(F')}{\rank(F')} < \frac{\deg(F)}{\rank(F)} \text{\Big)}.\]
\end{itemize}
\end{definition}

\begin{example}
    We recall the  parabolic line bundle $\deg L(\gamma D)$ in \autoref{exa:paraLineBund}. Its degree
    \[\deg L(\gamma D)=\deg L + \sum_{i=1}^n \gamma_i.\]
\end{example}

\subsubsection{The pullback of parabolic vector bundles}
 We recall a definition proposed by Simpson in \cite[Section 2.2]{IySi07} for the pullback of parabolic vector bundles. In the following, we will give an easy-to-use format description of the pullback, see \autoref{prop:PullbackParabolicBundles}. Roughly speaking, the parabolic pullback can be globally described by the pullback for usual vector bundles and the pullback for parabolic line bundles.

 \begin{setup}
     Let $f\colon (Y', D_{Y'}) \rightarrow (Y, D_Y)$ be a morphism of smooth separated schemes with relative normal crossings divisors over $S$ such that $f^{-1}(D_Y)\subset D_{Y'}$.
 \end{setup}

\begin{definition} \label{def_pullbackbundle}
\begin{enumerate}
\item[(1)] If $F$ is a parabolic line bundle, then there exists a line bundle $L$ and a rational divisor $B$ supported on $D$ such that $F=L(B)$. We define
\[f^*F \coloneqq (f^*L)(f^*B).\]
\item[(2)] For a general parabolic vector bundle $V$, we define $f^*V$ through localization, thereby reducing the case to parabolic line bundles.
\end{enumerate}
\end{definition}

Let $V=\{V_\alpha\}$ be a parabolic vector bundle over $(Y, D_Y)/S$. For any $\gamma\in\mathbb{Q}^n$, we identify the two parabolic vector bundles
\[V = V(-\gamma D)\otimes\mathcal{O}_Y(\gamma D).\]
Clearly, once we identify the usual vector bundle with its associated parabolic vector bundle with the trivial parabolic structure, then $V_0$ can be viewed as a parabolic subsheaf of $V$
\[V_0 \subseteq V.\]
In particular, for the parabolic vector bundle $V(-\gamma D)$, we have a parabolic subsheaf
\[V(-\gamma D)_0\hookrightarrow V(-\gamma D).\]
By pullback along $f$ and tensoring $f^*\Big(\mathcal{O}_Y(\gamma D)\Big)$, we obtain a parabolic subsheaf
\[f^*_{\gamma}(V)\coloneqq f^*\Big(V(-\gamma D)_0\Big)\otimes f^*\Big(\mathcal{O}_Y(\gamma D)\Big) \subseteq f^*\Big(V(-\gamma D)\Big)\otimes f^*\Big(\mathcal{O}_Y(\gamma D)\Big) = f^*(V).\]
\begin{proposition} \label{prop:PullbackParabolicBundles}
$f^*V = \sum\limits_{\gamma\in\mathbb{Q}^n} f^*_{\gamma}(V)$.
\end{proposition}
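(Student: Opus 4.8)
The plan is to verify the asserted equality $f^*V = \sum_{\gamma\in\mathbb{Q}^n} f^*_\gamma(V)$ locally on $Y'$, reducing to the case where $V$ is a direct sum of parabolic line bundles, since both sides of the equation are defined through localization and since the formation of the subsheaves $f^*_\gamma(V)$ is compatible with passing to direct summands. First I would record that the right-hand side is indeed a parabolic subsheaf of $f^*V$: each $f^*_\gamma(V)$ sits inside $f^*V$ by construction, and the sum over $\gamma\in\mathbb{Q}^n$ of these subsheaves inherits a filtered structure from the inclusions $V(-\gamma D)_0\hookrightarrow V(-\gamma' D)_0$ for $\gamma'\leq \gamma$; one should check the three parabolic axioms (normalization, semicontinuity, rationality) for the collection $\{\sum_\gamma f^*_\gamma(V)_\alpha\}_\alpha$, but these follow from the corresponding axioms downstairs together with the fact that $f^*$ of a short exact sequence of $S$-flat sheaves with $S$-flat cokernel remains exact (using $h$-torsion-freeness as in \autoref{def_pullbackbundle}).

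The heart of the argument is the line-bundle case. Suppose $V = L(B)$ with $L$ a line bundle on $Y$ and $B = \beta_1 D_{Y,1}+\cdots+\beta_n D_{Y,n}$ a rational divisor supported on $D_Y$; by \autoref{def_pullbackbundle}(1) we have $f^*V = (f^*L)(f^*B)$, whose degree-$\alpha$ piece is $(f^*L)(\lfloor f^*(B) + \alpha D_{Y'}\rfloor)$ — here $f^*B$ denotes the pullback rational divisor, whose coefficients along the components of $D_{Y'}$ are determined by the ramification indices of $f$ along $D_{Y'}$. On the other side, for $\gamma\in\mathbb{Q}^n$ the parabolic bundle $V(-\gamma D) = L\big((B-\gamma)D\big)$ has zero-piece $V(-\gamma D)_0 = L(\lfloor B - \gamma\rfloor D_Y)$, an honest line bundle on $Y$; pulling back and tensoring with $f^*\mathcal{O}_Y(\gamma D)$ gives
\[
f^*_\gamma(V)_\alpha = (f^*L)\Big(f^*\big(\lfloor B-\gamma\rfloor D_Y\big) + \lfloor f^*(\gamma D) + \alpha D_{Y'}\rfloor\Big),
\]
and I would compare divisors component-by-component along $D_{Y'}$: the claim reduces to the elementary fact that for each component $D_{Y',j}$ with $f^*D_{Y,i} = \sum_j e_{ij} D_{Y',j}$, one has
\[
\operatorname{ord}_{D_{Y',j}}\big(f^*B\big) = \max_{\gamma}\Big( \sum_i e_{ij}\lfloor \beta_i - \gamma_i\rfloor + \operatorname{ord}_{D_{Y',j}}\lfloor f^*(\gamma D)\rfloor \Big)
\]
as $\gamma$ ranges over $\mathbb{Q}^n$ — choosing $\gamma_i$ slightly below $\beta_i$ makes $\lfloor\beta_i-\gamma_i\rfloor = 0$ while $\operatorname{ord}_{D_{Y',j}}f^*(\gamma D) = \sum_i e_{ij}\gamma_i$ approaches $\operatorname{ord}_{D_{Y',j}}f^*B$ from below, and the floor on the pulled-back divisor catches up exactly because the coefficients $e_{ij}$ are integers. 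Summing over $\gamma$ (i.e. taking the union of the subsheaves) then recovers $(f^*L)(\lfloor f^*B + \alpha D_{Y'}\rfloor) = (f^*V)_\alpha$.

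The main obstacle I anticipate is the bookkeeping in the mixed terms: when $f^*D_{Y,i}$ is a nontrivial combination $\sum_j e_{ij} D_{Y',j}$, the floor function does not commute with pullback ($\lfloor f^* B\rfloor \neq f^*\lfloor B\rfloor$ in general), so one genuinely needs the supremum over all $\gamma$ — not just $\gamma = B$ — to produce the correct pulled-back parabolic structure, and one must be careful that the supremum is attained (or approached) within $\mathbb{Q}^n$ and that the resulting collection still satisfies semicontinuity. A secondary technical point is checking that localization is legitimate, i.e. that Simpson's pullback (\autoref{def_pullbackbundle}(2)) is well-defined independently of the local trivialization into parabolic line bundles and that $\sum_\gamma f^*_\gamma(-)$ glues; this is where I would invoke that $V$ is locally abelian and that the transition isomorphisms between two such local decompositions are morphisms of parabolic bundles, hence compatible with both constructions. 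Once these compatibilities are in place, the line-bundle computation above finishes the proof.
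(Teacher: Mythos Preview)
Your approach is correct and follows the same overall strategy as the paper: both sides are local on $Y'$, the construction is compatible with direct sums, so it suffices to treat a parabolic line bundle $V = L(\beta D)$. Where you diverge is in the line-bundle verification. You carry out a floor-and-ramification computation, taking $\gamma_i$ slightly below $\beta_i$ and arguing via a limit that the supremum of the $f^*_\gamma(V)$ catches up to $f^*V$; you then worry (legitimately, given your setup) about whether the supremum is attained in $\mathbb{Q}^n$ and whether floors commute with pullback.

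All of this is unnecessary. The paper's observation is that for $V = L(\beta D)$ you may simply take $\gamma = \beta$: then $V(-\gamma D)_0 = L(\lfloor 0\rfloor D) = L$ is an honest line bundle, and
\[
f^*_\beta(V) \;=\; f^*L \,\otimes\, f^*\bigl(\mathcal{O}_Y(\beta D)\bigr) \;=\; (f^*L)\bigl(f^*(\beta D)\bigr) \;=\; f^*V
\]
by \autoref{def_pullbackbundle}(1). So a single term in the sum already equals $f^*V$, and since every $f^*_\gamma(V)$ was constructed as a subsheaf of $f^*V$, the equality is immediate. No limit, no floor bookkeeping, no attainment issue. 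Your argument works, but you should replace the limit $\gamma_i \nearrow \beta_i$ by the exact value $\gamma = \beta$ and drop the paragraph of anticipated obstacles.
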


\begin{proof}
By localization, we reduce it to the case of parabolic line bundles, which follows by taking $\mathcal{L}$ to be the parabolic line bundle itself.
\end{proof}

In \cite{Bis97}, Biswas constructed the pullback functor of parabolic vector bundles in the case that $f$ is a finite Galois morphism. In this subsection, we compare Biswas' construction with the pullback functor defined in \autoref{def_pullbackbundle} and point out that the pullback functor in \autoref{def_pullbackbundle} is a generalization of it. In \cite{Bis97}, the parabolic pullback functor is constructed under the following assumptions.
\begin{setup}\label{setup_PullBackPar}
Let $k$ be a field and $S=\Spec k$. Let $N$ be a positive integer coprime to $\Char k$. Let $f:Y'\to Y$ be a finite Galois morphism with Galois group $G$ such that $D_{Y'}=(f^*D_{Y})_{red}$ is normal crossing and $f^*D_{Y, i}=k_i N D_{Y', i}$ for $1\leq i\leq n$, where $k_i\in \mathbb{Z}_+$ and $D_{Y', i}=(f^*D_{Y, i})_{red}$.
\end{setup}
Given a parabolic vector bundle $V$ over $(Y, D_Y)/S$ whose weights are
contained in $\frac{1}{N}\mathbb{Z}$, set $E_i^0=f^*(V_0\otimes \mathcal{O}(D_Y))$ for $1\leq i\leq n$. Let
\[0\leq \alpha_i^1< \alpha_i^2< \ldots\alpha_i^{w_i}<1\]
be the parabolic weights supported at $D_{Y, i}$. Set
\[\alpha_{i}^{w_i+1}=1, \quad m_i^j=N\alpha_{i}^j.\]
 For each $i$, we construct a filtration of bundles
\[E_i^0\supseteq E_i^1\supseteq\ldots \supseteq E_i^{w_i}\]
inductively for $1\leq j\leq w_i$ by
\[E_i^{j+1}\coloneqq\ker\left(E_i^j\to f^* \left(
V_{-\alpha_i^{j+1}\delta^i}/V_{-\alpha_i^{j+2}\delta^i}\otimes \mathcal O_{Y}(D_Y)
\right)\big|_{k_i(N-m_i^{j+1})D_{Y', i}}
\right).\]
Define
\[f_{par}^*V\coloneqq \bigcap_{i=1}^n E_{i}^{w_i}.\]

\begin{proposition}\label{Thm_PullBack}
Let $V$ be a parabolic vector bundle over $(Y, D_Y)/S$ with weights contained in $\frac{1}{N}\mathbb{Z}$. Then $f^*V = f_{par}^*V$ and has trivial parabolic structure.
\end{proposition}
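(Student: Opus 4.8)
The plan is to localize on $Y$, reduce to the case of a single parabolic line bundle, and then compute both pullbacks explicitly. Both constructions are local on $Y$: for $f^{*}V$ this is built into \autoref{def_pullbackbundle}, while Biswas' filtration $E_{i}^{0}\supseteq\cdots\supseteq E_{i}^{w_{i}}$ is assembled only from the sheaves $V_{\alpha}$, the quotients $V_{-\alpha_{i}^{j}\delta^{i}}/V_{-\alpha_{i}^{j+1}\delta^{i}}$, the functor $f^{*}$, and the divisors $k_{i}(N-m_{i}^{j})D_{Y',i}$, each of which commutes with restriction along $f^{-1}(W)\to W$ for an open $W\subseteq Y$. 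Thus it suffices to work over an open $W$ on which $V\cong\bigoplus_{j}L_{j}(\gamma_{j}D_{Y})$; since the parabolic weights of $V$ lie in $\tfrac1N\mathbb{Z}$ and the weight of $L_{j}(\gamma_{j}D_{Y})$ along $D_{Y,i}$ is the fractional part of $\gamma_{j,i}$, we may take each $\gamma_{j}\in\tfrac1N\mathbb{Z}^{n}$. Then $f^{*}(\gamma_{j}D_{Y})=\sum_{i}\gamma_{j,i}k_{i}N\,D_{Y',i}$ is an \emph{integral} divisor (by the ramification hypothesis $f^{*}D_{Y,i}=k_{i}N\,D_{Y',i}$ of \autoref{setup_PullBackPar}), so $f^{*}\big(L_{j}(\gamma_{j}D_{Y})\big)=(f^{*}L_{j})\big(f^{*}(\gamma_{j}D_{Y})\big)$ is an honest line bundle with trivial parabolic structure; hence $f^{*}V$ is Zariski-locally a direct sum of honest line bundles, which already gives the ``trivial parabolic structure'' assertion.

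It then remains to prove $f^{*}V=f_{par}^{*}V$. Both functors are additive — for $f^{*}$ this is immediate from \autoref{def_pullbackbundle}, and for $f_{par}^{*}$ one checks that the Biswas filtration of $\bigoplus_{j}L_{j}(\gamma_{j}D_{Y})$ splits as the direct sum of the filtrations of the summands, because the step at a weight $\alpha_{i}^{j'}$ along $D_{Y,i}$ only alters those summands whose weight along $D_{Y,i}$ equals $\alpha_{i}^{j'}$ (alternatively, one invokes the additivity of Biswas' functor from \cite{Bis97}). Both are also $\mathcal O$-linear, so tensoring by a line bundle pulled back from $Y$ passes through each; writing $\gamma_{j,i}=c_{j,i}+a_{j,i}/N$ with $c_{j,i}\in\mathbb{Z}$, $a_{j,i}\in\{0,\dots,N-1\}$ and absorbing $L_{j}\big(\sum_{i}c_{j,i}D_{Y,i}\big)$, this reduces us to $V=\mathcal O_{Y}(\gamma D_{Y})$ with $\gamma=(a_{1}/N,\dots,a_{n}/N)$. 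For this $V$, \autoref{def_pullbackbundle}(1) gives
\[f^{*}V=\mathcal O_{Y'}\big(f^{*}(\gamma D_{Y})\big)=\mathcal O_{Y'}\Big(\textstyle\sum_{i}a_{i}k_{i}\,D_{Y',i}\Big).\]

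To compute $f_{par}^{*}V$ in this case, note $V_{0}=\mathcal O_{Y}$, so $E_{i}^{0}=f^{*}\mathcal O_{Y}(D_{Y})=\mathcal O_{Y'}\big(\sum_{\ell}k_{\ell}N\,D_{Y',\ell}\big)$ for every $i$; along $D_{Y,i}$ there is exactly one parabolic weight, namely $a_{i}/N$ (which is $0$ when $a_{i}=0$), so $w_{i}=1$ and $m_{i}^{1}=a_{i}$, and there is a single filtration step. Since $V_{-\alpha_{i}^{1}\delta^{i}}=\mathcal O_{Y}$ and $V_{-\alpha_{i}^{2}\delta^{i}}=V_{-\delta^{i}}=\mathcal O_{Y}(-D_{Y,i})$, the quotient is $\mathcal O_{D_{Y,i}}$; after $\otimes\,\mathcal O_{Y}(D_{Y})$, pullback, and restriction to $k_{i}(N-a_{i})D_{Y',i}$ — using $\mathcal O_{mD}\otimes\mathcal O_{m'D}=\mathcal O_{\min(m,m')D}$ for an effective Cartier divisor $D$ — the target of the defining map is $\mathcal O_{k_{i}(N-a_{i})D_{Y',i}}\big(\sum_{\ell}k_{\ell}N\,D_{Y',\ell}\big)$, and $E_{i}^{0}\to$ (this) is the surjection $\mathcal O_{Y'}\twoheadrightarrow\mathcal O_{k_{i}(N-a_{i})D_{Y',i}}$ twisted by $\mathcal O_{Y'}\big(\sum_{\ell}k_{\ell}N\,D_{Y',\ell}\big)$. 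Hence $E_{i}^{1}=\mathcal O_{Y'}\big(\sum_{\ell\ne i}k_{\ell}N\,D_{Y',\ell}+k_{i}a_{i}\,D_{Y',i}\big)$, and since $a_{i}<N$,
\[f_{par}^{*}V=\bigcap_{i=1}^{n}E_{i}^{1}=\mathcal O_{Y'}\Big(\textstyle\sum_{i}a_{i}k_{i}\,D_{Y',i}\Big)=f^{*}V.\]

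I expect the only genuinely delicate point to be the last step: keeping track of how restriction to the thickened divisor $k_{i}(N-m_{i}^{j+1})D_{Y',i}$ interacts with the ambient twists so that the kernel emerges as a concrete line bundle, and in particular checking that the single formula for $E_{i}^{1}$ still holds when $a_{i}=0$. This forces one to count $0$ among the parabolic weights in the convention of \autoref{sec_main_para} (for the trivial parabolic structure), since otherwise the identity would already fail for $V=\mathcal O_{Y}$. A secondary, more routine point is to make precise the compatibility of Biswas' construction with the Zariski localization and the direct-sum decomposition used at the outset; this is part of the tensor-functoriality of the pullback established in \cite{Bis97}.
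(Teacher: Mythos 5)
Your proof is correct and follows essentially the same route as the paper's: localize to reduce to the parabolic line bundle $\mathcal{O}_Y(\gamma D_Y)$ with $\gamma\in\big([0,1)\cap\tfrac1N\mathbb{Z}\big)^n$, compute $E_i^0$ and $E_i^1$ explicitly, and identify $\bigcap_i E_i^1$ with $f^*\mathcal{O}_Y(\gamma D_Y)=\mathcal{O}_{Y'}\big(\sum_i k_iN\gamma_i D_{Y',i}\big)$ as subsheaves of $f^*\big(V_0(D_Y)\big)$. You merely spell out more of the reduction (additivity of both functors, absorbing the line bundle and integral twists, the edge case $\gamma_i=0$) and the triviality of the resulting parabolic structure, which the paper's proof leaves implicit.
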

\begin{proof}
By localization, we reduce to the case of a parabolic line bundle $\mathcal{L}=\mathcal{O}_Y(\gamma D_Y)$ with $\gamma=(\gamma_1, \ldots, \gamma_n)\in ([0,1)\cap \mathbb{Q})^n$, the parabolic weight of $\mathcal L$ supported at $D_{Y, i}$ is $\gamma_i$. For any  $1\leq i\leq n$, one has
\begin{equation*}
\begin{split}
E_i^0&=\mathcal{O}_{Y'}\left(\sum_{i=1}^n k_i N D_{Y'_i}\right),\\
E_i^1&=\ker\left(E_i^0\to f^*\Big(\mathcal O_Y/ \mathcal O_Y(-D_{Y,i})\otimes \mathcal O_{Y}(D_Y)\Big)\Big|_{k_iN(1-\gamma_i)D_{Y',i}}
\right)\\
&=\mathcal{O}_{Y'}\left(\sum_{j\neq i}k_j N D_{Y', j} +k_iN \gamma_iD_{Y', i}\right).
\end{split}
\end{equation*}
Then as sub sheaves of $f^*\mathcal L_0(D_{Y})$,
$f_{par}^*\mathcal{L}=\cap_{i=1}^n E_{i}^1=\mathcal{O}_{Y'}(\sum_{i=1}^n k_i N\gamma_i D_{Y',i})=f^*\mathcal{L}$.
\end{proof}

\subsection{Parabolic $\lambda$-flat bundles}
In this subsection, we recall the definition of parabolic $\lambda$-connection, see also \cite[Section 4]{Sim97}.
\subsubsection{Logarithmic $\lambda$-connections}
A \emph{(logarithmic) $\lambda$-connection} on a sheaf $V$ of $\mathcal{O}_Y$-modules over $(Y, D_Y)/S$ is an $\mathcal{O}_S$-linear map $\nabla\colon V\rightarrow V\otimes_{\mathcal{O}_Y} \Omega_{Y/S}^1(\log D_Y)$ satisfying the Leibniz rule $\nabla(rv) = v\otimes \rmd r + \lambda r\nabla(v)$ for any local section $r\in \mathcal{O}_Y$ and $v\in V$. Given a (logarithmic) $\lambda$-connection, there are canonical maps
\[\nabla\colon V\otimes\Omega_{Y/S}^i(\log D_Y) \rightarrow V\otimes\Omega_{Y/S}^{i+1}(\log D_Y)\]
given by $s\otimes \omega \mapsto\nabla(s)\wedge \omega +\lambda s \otimes \rmd\omega$.
The curvature $\nabla\circ\nabla$, the composition of the first two canonical maps, is $\mathcal{O}_Y$-linear and contained in $\mathcal{E}nd(V)\otimes_{\mathcal{O}_Y} \Omega^2_{Y/S}(\log D_Y)$.
The $\lambda$-connection is called \emph{integrable} if the curvature vanishes.
A pair $(V,\nabla)$ which is made up of a $\mathcal O_Y$-module $V$ and an integrable $\lambda$-connection $\nabla$ on $V$ is called a \emph{(logarithmic) $\lambda$-flat sheaf}. A  (logarithmic) $\lambda$-flat sheaf is called a \emph{(logarithmic) de Rham sheaf} (reps. \emph{(logarithmic) Higgs sheaf}) if  $\lambda=1$(reps. $\lambda=0$).

For a \emph{(logarithmic) $\lambda$-flat sheaf} on $(Y,D_Y)/S$, one has a natural complex:
\[ 0\rightarrow V
\xrightarrow{\nabla} V\otimes\Omega^1_{Y/S}(\log D_Y)
\xrightarrow{\nabla} V\otimes\Omega^2_{Y/S}(\log D_Y)
\xrightarrow{\nabla} V\otimes\Omega^3_{Y/S}(\log D_Y) \rightarrow \cdots\]
A logarithmic $\lambda$-flat sheaf (reps. logarithmic de Rham sheaf, logarithmic Higgs sheaf) is called a \emph{logarithmic $\lambda$-flat bundle} (reps. \emph{logarithmic de Rham bundle}, \emph{logarithmic Higgs bundle}), if the underlying sheaf is a vector bundle over $Y$. Denote by \emph{$\MIC(Y, D_Y)$} the category of all logarithmic de Rham bundles over $(Y, D_Y)$.
We recall the following well-known result.
\begin{lemma}\label{thm_NatExtLogConn} Let $(V, \nabla)$ be a logarithmic $\lambda$-flat bundle over $(Y, D_Y)/S$. Denote by $j_Y$ the open immersion of $U=Y\setminus D_Y$ into $Y$. Then
\begin{enumerate}
\item[(1)] the $\lambda$-connection $\nabla$ on $V$ can be uniquely extended onto $V_\infty := j_{Y*}j^*_Y(V)$ under the natural injection $V\hookrightarrow V_\infty$, and
\item[(2)] the extension $\lambda$-connection can be restricted to be $\nabla'$ onto $V(D')$ for any integral divisor $D'=\sum_{i=1}^{n}a_{i}D_{Y, i}$ supported on $D$.
In particular, if $D'$ is also positive, then the logarithmic $\lambda$-flat bundle $(V, \nabla)$ extends to another one $(V(D'), \nabla')$ with natural injection
\[(V, \nabla) \hookrightarrow (V(D'), \nabla').\]
\item[(3)] Let $\nabla''$ be a $\lambda$-connection onto $V$ such that $j_Y^*\nabla = j_Y^*\nabla''$, then $\nabla''=\nabla$.
\end{enumerate}
\end{lemma}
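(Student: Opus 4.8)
The plan is to work Zariski-locally on $Y$, so let me pick an affine open $\Spec A \subseteq Y$ on which the divisor $D_Y$ is cut out by a single function, or more precisely on which each $D_{Y,i}$ is principal, say generated by $t_i \in A$, and on which $V$ is free. Shrinking further, I may assume the relative log differentials $\Omega^1_{Y/S}(\log D_Y)$ are free with a basis containing $\rmd t_i / t_i$ for the components meeting this open, extended by a basis of ordinary differentials in the remaining relative coordinates; this is possible by the smoothness of $Y$ over $S$ and the normal crossing hypothesis. On such an open, $j_{Y*}j_Y^*(V)$ is the $A$-module $V \otimes_A A[t_1^{-1},\dots]$ localized at the $t_i$'s, with the obvious injection $V \hookrightarrow V_\infty$.

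First I would prove (1). Uniqueness is immediate: any two extensions agree on $V$, which generates $V_\infty$ as a module over the localized structure sheaf, and the Leibniz rule $\nabla(r v) = v \otimes \rmd r + \lambda r \nabla(v)$ then forces agreement on all of $V_\infty$ once we know $\rmd(t_i^{-1}) = -t_i^{-1}\cdot(\rmd t_i/t_i)$ lies in $\Omega^1_{Y/S}(\log D_Y)$ — which it does, by the very definition of the log differentials. For existence, define $\nabla$ on $V_\infty$ by the same formula: for $v \in V$ and a monomial $\mathbf{t}^{-\mathbf{a}}$, set $\nabla(\mathbf{t}^{-\mathbf{a}} v) = \mathbf{t}^{-\mathbf{a}} \otimes \rmd(\mathbf{t}^{-\mathbf{a}}) / \mathbf{t}^{-\mathbf{a}} \cdot \text{(correction)} + \lambda \mathbf{t}^{-\mathbf{a}} \nabla(v)$; concretely $\nabla(\mathbf{t}^{-\mathbf{a}} v) = -\mathbf{t}^{-\mathbf{a}}\big(\sum_i a_i \,\rmd t_i/t_i\big)\otimes v + \lambda\, \mathbf{t}^{-\mathbf{a}}\nabla(v)$, which visibly lands in $V_\infty \otimes \Omega^1_{Y/S}(\log D_Y)$. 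One checks the Leibniz rule and then integrability: $\nabla\circ\nabla$ on $V_\infty$ is computed from $\nabla\circ\nabla$ on $V$ (which vanishes) plus terms involving $\rmd(\rmd t_i/t_i) = 0$ and the square of a $1$-form, all of which cancel; the gluing of these local definitions is automatic from uniqueness.

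Next, (2) is essentially bookkeeping once (1) is in hand. Given an integral divisor $D' = \sum a_i D_{Y,i}$, the subsheaf $V(D') \subseteq V_\infty$ is locally $\mathbf{t}^{-\mathbf{a}} V$, and the formula above shows $\nabla(\mathbf{t}^{-\mathbf{a}} V) \subseteq \mathbf{t}^{-\mathbf{a}} V \otimes \Omega^1_{Y/S}(\log D_Y) = V(D') \otimes \Omega^1_{Y/S}(\log D_Y)$, because the extra term $-\mathbf{t}^{-\mathbf{a}}(\sum a_i \,\rmd t_i/t_i)\otimes v$ is again a section of $V(D')$ tensored with a \emph{logarithmic} form. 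So $\nabla$ restricts to $\nabla'$ on $V(D')$, it is still integrable (being a restriction of an integrable connection), and when $D'$ is effective the inclusion $V \hookrightarrow V(D')$ is a morphism of logarithmic $\lambda$-flat bundles since it is horizontal by construction. Finally (3): if $\nabla''$ is another $\lambda$-connection on $V$ with $j_Y^*\nabla'' = j_Y^*\nabla$, then $\nabla'' - \nabla$ is an $\mathcal{O}_Y$-linear map $V \to V \otimes \Omega^1_{Y/S}(\log D_Y)$ (the non-linear Leibniz terms cancel) which vanishes after restriction to $U_Y$; but $\Omega^1_{Y/S}(\log D_Y)$, hence $V \otimes \Omega^1_{Y/S}(\log D_Y)$, is a locally free sheaf on $Y$ and therefore has no sections supported on the nowhere-dense closed set $D_Y$, so the difference is zero, i.e.\ $\nabla'' = \nabla$.

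The only genuinely delicate point is making sure at the start that $\Omega^1_{Y/S}(\log D_Y)$ really does contain the sections $\rmd t_i/t_i$ and that the resulting formula for $\nabla$ on $V_\infty$ is independent of the local choices of the $t_i$ and of the trivialization of $V$ — but this independence is forced by the uniqueness argument, so there is no real obstacle; the whole lemma reduces to the local existence computation, which is classical (cf.\ Deligne's canonical extension), together with the torsion-freeness remark used in (3).
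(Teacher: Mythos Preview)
Your argument is correct. For parts (1) and (2) the paper simply cites \cite[Lemma~2.7]{EsVi92}, whereas you supply the explicit local computation; your treatment is therefore more detailed but follows the same classical line (Deligne-style canonical extension via the Leibniz rule and the fact that $\rmd t_i/t_i$ lies in $\Omega^1_{Y/S}(\log D_Y)$). For part (3) there is a small but genuine difference: the paper argues by writing a local section $s$ of $V$ as $t^\alpha s'$ with $s'$ a section over $U$ and then expanding both $\nabla(s)$ and $\nabla''(s)$ via the Leibniz rule, while you instead observe that $\nabla''-\nabla$ is an $\mathcal{O}_Y$-linear map into the locally free sheaf $V\otimes\Omega^1_{Y/S}(\log D_Y)$ whose restriction to the schematically dense open $U_Y$ vanishes, hence is zero because the local equations of $D_Y$ are non-zero-divisors. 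Your version is arguably cleaner and makes the role of torsion-freeness explicit; the paper's version has the advantage of being a direct computation that does not invoke any density statement.
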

\begin{proof}
(1) and (2) follow from \cite[lemma 2.7]{EsVi92}. For (3), we just need to check it locally. Let $\{t_1,t_2,\ldots,t_d\}$ be a local \'etale coordinate of $Y/S$ and $D_Y$ is defined by  $\{t_1,t_2,\ldots,t_n\}$. Write $t^\alpha=t_1^{\alpha_1}t_2^{\alpha_2}\ldots t_n^{\alpha_n}$ for short.
For any local section $s$ of $V$, there exist $\alpha\in \mathbb N^n$ such that $s=t^{\alpha}s'$ with $s'$ a local section of $j_Y^* V$. then
\[\nabla''(s)=t^{\alpha}\nabla''(s')+\lambda s'\otimes \rmd(t^{\alpha})=t^{\alpha}\nabla(s')+\lambda s'\otimes \rmd(t^{\alpha})=\nabla(s).\qedhere\]
\end{proof}

\subsubsection{Parabolic $\lambda$-flat sheaves}
\begin{definition}
Let $V$ be a parabolic sheaf on $(Y,D_Y)/S$.
\begin{enumerate}
    \item A \emph{parabolic $\lambda$-connection} on $V$ is a family of logarithmic  $\lambda$-connections $$\{\nabla_\alpha:V_\alpha \to V_\alpha\otimes \Omega_{Y/S}^1(\log D_Y)\}_{\alpha\in \mathbb Q^n}$$ which is compatible with the inclusion $V_\alpha\to V_{\beta}$ when $\beta\geq \alpha$. A parabolic $\lambda$-connection $\{\nabla_\alpha\}_{\alpha\in \mathbb Q^n}$ is integrable if for any $\alpha\in \mathbb Q^n$, $\nabla_\alpha$ is integrable.
    \item A parabolic $\lambda$-connection is called a  \emph{parabolic connection} (reps. \emph{parabolic Higgs field}) when $\lambda=1$(reps. $\lambda=0$).
    \item A pair $(V,\nabla)$ consisting of a parabolic vector bundle $V$ on $(Y,D_Y)/S$ and an integrable parabolic $\lambda$-connection $\nabla$(reps. parabolic connection, parabolic Higgs field) on $V$ is called a \emph{parabolic $\lambda$-flat bundle} (reps.  \emph{parabolic de Rham bundle}, \emph{parabolic Higgs bundle}).
    \item A parabolic flat $\lambda$-flat bundle $(V,\nabla)$ is semistable (reps. stable) if for any proper $\nabla$-invariant parabolic subbundle $W\subseteq V$, one has
    \[\frac{\deg(W)}{\rank(W)} \leq \frac{\deg(V)}{\rank(V)} \qquad \text{\Big(resp.} \frac{\deg(W)}{\rank(W)} < \frac{\deg(V)}{\rank(V)} \text{\Big)}.\]
\end{enumerate}
\end{definition}

\begin{remark}
\begin{enumerate}
\item[(1).] The $(V_\alpha, \nabla_\alpha)$ may be considered as sub $\lambda$-flat sheaf of
\[(V_\infty, \nabla_\infty) := \varinjlim_\beta (V_\beta, \nabla_\beta) = j_{Y*}j^*_Y (V_\alpha, \nabla_\alpha).\]
\item[(2).] Tensor product of two parabolic $\lambda$-flat bundles can be naturally defined: for any two parabolic $\lambda$-flat bundles $(V, \nabla)$ and $(V', \nabla')$,  the underlying parabolic vector bundle of their tensor product is $V\otimes V'$ and the $\lambda$-parabolic connection is defined as the restriction of the $\lambda$-connection $\Big(\nabla_\infty\otimes \id + \id\otimes\nabla'_\infty\Big)$ on $V_\infty\otimes V'_\infty$.
\end{enumerate}
\end{remark}

\begin{example}
Let $\gamma\in\mathbb{Q}^n$ be a multiple index and let $(V,\nabla)$ be a logarithmic $\lambda$-flat bundle on $(Y, D_Y)/S$. There is a natural parabolic $\lambda$-connection, denoted by $\nabla(\gamma D)$, on the parabolic vector bundle $V(\gamma D)$ given by
\[\nabla(\gamma D)_\alpha:=\nabla_\infty \mid_{V(\gamma D)_{\alpha}}.\]
The logarithmic $\lambda$-connection $\nabla$ may be considered as a parabolic $\lambda$-connection (we say ``with trivial parabolic structure'') by identifying it with $\nabla(0D)$.
Assume $(V, \nabla)=(\mathcal{O}_Y, \lambda\rmd)$. We call parabolic $\lambda$-flat line bundles of the form $(\mathcal{O}_Y(\gamma D), \lambda\rmd(\gamma D))$ \emph{shifting parabolic $\lambda$-flat line bundles}.
\end{example}

As a consequence of \autoref{thm_NatExtLogConn}, one has the following uniqueness result for parabolic $\lambda$-connections.
\begin{lemma}\label{thm_UnqiuePara}
Let $V = \{V_\alpha\}_{\alpha\in \mathbb Q^n}$ be a parabolic vector bundle over $(Y, D_Y)/S$.
\begin{enumerate}
\item Suppose $\nabla_0$ is a logarithmic $\lambda$-connection on $V_0$.
Then for any $\alpha\neq 0$, there exists a unique parabolic $\lambda$-connection $\nabla_\alpha$ on $V_\alpha$ such that $\nabla=\{\nabla_\alpha\}_\alpha$ forms a parabolic $\lambda$-connection on $V$.
\item Suppose $\nabla=\{\nabla_\alpha\}_\alpha$ and $\nabla'=\{\nabla'_\alpha\}_\alpha$ are two parabolic $\lambda$-connections on $V$. If $j^*_Y\nabla_\alpha=j^*_Y\nabla'_\beta$ for some $\alpha, \beta\in \mathbb{Q}^n$, then $\nabla=\nabla'$.
\end{enumerate}
\end{lemma}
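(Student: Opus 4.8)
The plan is to deduce both parts of \autoref{thm_UnqiuePara} from \autoref{thm_NatExtLogConn}, using that a parabolic $\lambda$-connection is, by definition, entirely captured by its restriction to the generic fiber $j_Y^*$ together with a compatible family of integral-level connections.

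For part (1), I would start by restricting $\nabla_0$ to the open set $U_Y$, obtaining a $\lambda$-connection $j_Y^*\nabla_0$ on $j_Y^*V_0 = V_\infty|_{U_Y}$. By \autoref{thm_NatExtLogConn}(1), $j_Y^*\nabla_0$ extends uniquely to a logarithmic $\lambda$-connection $\nabla_\infty$ on $V_\infty = j_{Y*}j_Y^*V_0$; by uniqueness and \autoref{thm_NatExtLogConn}(3), this extension restricts back to $\nabla_0$ on $V_0$ (since both $\nabla_0$ and $\nabla_\infty|_{V_0}$ are logarithmic $\lambda$-connections on $V_0$ agreeing after $j_Y^*$). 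Now for each $\alpha \in \mathbb{Q}^n$ I want to define $\nabla_\alpha \coloneqq \nabla_\infty|_{V_\alpha}$. The content to check is that this restriction is well-defined, i.e.\ that $\nabla_\infty$ maps $V_\alpha$ into $V_\alpha \otimes \Omega^1_{Y/S}(\log D_Y)$. For $\alpha$ with integer entries this is exactly \autoref{thm_NatExtLogConn}(2). For general rational $\alpha$, I would argue locally: choosing an étale coordinate $\{t_1,\dots,t_d\}$ with $D_Y = \{t_1\cdots t_n = 0\}$, the sheaf $V_\alpha$ is generated over $V_0$ by sections of the form $t^{-\lfloor\text{-level shift}\rfloor}$ times sections of an integer-level $V_{\lfloor\alpha\rfloor}$ or $V_{\lceil\alpha\rceil}$, and a computation like the one in the proof of \autoref{thm_NatExtLogConn}(3) — namely $\nabla_\infty(t^\beta s) = t^\beta \nabla_\infty(s) + \lambda s \otimes \rmd(t^\beta)$ with $\rmd(t^\beta)/t^\beta$ a logarithmic form — shows the logarithmic pole order is unchanged, so $V_\alpha$ is preserved. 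Since $V$ is locally abelian this local check suffices. Compatibility with the inclusions $V_\alpha \hookrightarrow V_\beta$ and integrability are then immediate, as all the $\nabla_\alpha$ are restrictions of the single integrable $\nabla_\infty$. Uniqueness follows because any parabolic $\lambda$-connection extending $\nabla_0$ restricts, on $U_Y$, to $j_Y^*\nabla_0$, hence induces the same $\nabla_\infty$ by the uniqueness in \autoref{thm_NatExtLogConn}(1), hence equals $\{\nabla_\infty|_{V_\alpha}\}_\alpha$.

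For part (2), suppose $\nabla = \{\nabla_\alpha\}$ and $\nabla' = \{\nabla'_\alpha\}$ are two parabolic $\lambda$-connections on $V$ with $j_Y^*\nabla_\alpha = j_Y^*\nabla'_\beta$ for some $\alpha, \beta$. Passing to the limit over all indices, both $\nabla$ and $\nabla'$ induce logarithmic $\lambda$-connections $\nabla_\infty$ and $\nabla'_\infty$ on $V_\infty = j_{Y*}j_Y^* V_\gamma$ (independent of $\gamma$), and by compatibility $j_Y^*\nabla_\infty = j_Y^*\nabla_\alpha = j_Y^*\nabla'_\beta = j_Y^*\nabla'_\infty$. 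By \autoref{thm_NatExtLogConn}(3) applied to the logarithmic $\lambda$-flat bundle $V_\infty$ on $(Y, D_Y)$ — or rather to a fixed integral-level extension and then the limit — we get $\nabla_\infty = \nabla'_\infty$; restricting to each $V_\alpha$ gives $\nabla_\alpha = \nabla'_\alpha$, so $\nabla = \nabla'$. One subtlety: \autoref{thm_NatExtLogConn}(3) as stated is for a logarithmic $\lambda$-flat bundle with vector-bundle underlying sheaf, whereas $V_\infty$ need not be coherent; I would circumvent this by noting that the equality $\nabla_\infty = \nabla'_\infty$ can be tested after restricting to each $V_\alpha$, which is a genuine logarithmic $\lambda$-flat bundle, and on $V_\alpha$ both connections agree after $j_Y^*$, so \autoref{thm_NatExtLogConn}(3) applies directly to $V_\alpha$.

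The main obstacle I anticipate is the well-definedness in part (1): verifying that the globally extended connection $\nabla_\infty$ preserves $V_\alpha$ for \emph{non-integral} $\alpha$. For integral $\alpha$ this is the cited \cite[Lemma 2.7]{EsVi92}, but for fractional weights one must combine the local generation of $V_\alpha$ (using that $V$ is locally abelian, hence étale-locally a sum of parabolic line bundles $\mathcal{O}_Y(\gamma_i D)$) with the residue computation showing $\rmd(t^{\gamma})/t^{\gamma}$ has only a logarithmic pole — the point being that the parabolic level is shifted by $\lfloor\,\cdot\,\rfloor D_Y$, which is exactly what the $\log$-pole of $\Omega^1_{Y/S}(\log D_Y)$ accommodates. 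Everything else — compatibility, integrability, both uniqueness statements — is formal once the restriction $\nabla_\alpha = \nabla_\infty|_{V_\alpha}$ is known to make sense.
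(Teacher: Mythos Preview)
Your overall approach---reducing both parts to \autoref{thm_NatExtLogConn}---matches the paper's, which offers no proof beyond the sentence ``As a consequence of \autoref{thm_NatExtLogConn}, one has the following uniqueness result.'' Your argument for part (2) is correct and is exactly what that sentence intends: both families restrict to the same connection on $U_Y$, so \autoref{thm_NatExtLogConn}(3) applied at each level $V_\alpha$ forces $\nabla_\alpha=\nabla'_\alpha$.

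For part (1), however, the obstacle you flag is real and your proposed resolution does not close it. The locally abelian hypothesis gives, \'etale-locally, a splitting $V\cong\bigoplus_i \mathcal{O}_Y(\gamma_i D_Y)$ of the parabolic \emph{bundle}, but the given $\nabla_0$ on $V_0=\bigoplus_i\mathcal{O}_Y(\lfloor\gamma_i\rfloor D_Y)$ has no reason to be diagonal for this splitting. Your computation $\nabla_\infty(t^\beta s)=t^\beta\nabla_\infty(s)+\lambda s\otimes\rmd(t^\beta)$ only shows that $\nabla_\infty$ preserves each summand $\mathcal{O}_Y(\lfloor\alpha+\gamma_i\rfloor D_Y)$ \emph{if} one already knows $\nabla_\infty$ acts diagonally; the off-diagonal components of the residue can move a section of weight $\gamma_j$ into the summand of weight $\gamma_i$ with the wrong pole order. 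Concretely, on $Y=\Spec k[t]$ with $D_Y=\{t=0\}$ take $V=\mathcal{O}\oplus\mathcal{O}(\tfrac{1}{2}D_Y)$, so $V_0=\mathcal{O}^{\oplus 2}$ and $V_{-1/2}=t\mathcal{O}\oplus\mathcal{O}$, and set $\nabla_0=\lambda\,\rmd+\begin{pmatrix}0&1\\0&0\end{pmatrix}\tfrac{\rmd t}{t}$. Then $\nabla_\infty(tf,g)$ has first coordinate $\lambda\,\rmd(tf)+g\,\tfrac{\rmd t}{t}$, which does not lie in $t\mathcal{O}\otimes\Omega^1(\log D_Y)$ when $g(0)\neq 0$. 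So $\nabla_\infty$ fails to preserve $V_{-1/2}$, and no parabolic $\lambda$-connection on $V$ extends this $\nabla_0$.

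In other words, the existence assertion in part (1) is not true for an \emph{arbitrary} logarithmic $\lambda$-connection on $V_0$; one needs the residue of $\nabla_0$ along each $D_{Y,i}$ to preserve the parabolic filtration $V_{-\alpha\delta^i,0}/V_{0,0}(-D_{Y,i})$. The paper's phrasing (``uniqueness result'') and its only use of the lemma---where $\nabla_0$ is built so that $\Gr(V_0,\nabla_0,\Fil_0)$ matches a prescribed parabolic object---suggest the intended content is uniqueness, which your argument handles, together with existence under such a compatibility hypothesis. If you want to salvage existence, add that hypothesis and then your local computation goes through summand by summand after noting that the off-diagonal residue blocks from weight $\gamma_j$ to weight $\gamma_i$ vanish to the required order.
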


\subsubsection{Parabolic $\lambda$-flat line bundles}

By tensoring with some shifting parabolic $\lambda$-flat line bundle, a parabolic $\lambda$-flat line bundle can be modified to have a trivial parabolic structure. Thus, we have the following result.
\begin{lemma}
Let $(\mathcal{L}, \nabla)$ be a parabolic $\lambda$-flat line bundle over $(Y, D_Y)/S$. Denote by $w_i\in [0, 1)\cap \mathbb{Q}$ the parabolic weight of $\mathcal{L}$ along $D_{Y, i}$ and denote $w=(w_1, \cdots, w_n)$. Then
\begin{enumerate}[$(1)$]
\item we have a decomposition of $(\mathcal{L}, \nabla)$ into a tensor product of a usual logarithmic $\lambda$-flat bundle and a shifting parabolic $\lambda$-flat line bundle
\[(\mathcal{L}, \nabla) = (\mathcal{L}_0, \nabla_0) \otimes (\mathcal{O}_Y(w D), \rmd(w D)).\]
\item Suppose we have another such decomposition:
\[(\mathcal{L}, \nabla) = (L, \nabla)\otimes (\mathcal{O}_Y(\gamma D), \rmd(\gamma D)),\]
then $\omega - \gamma \in \mathbb{Z}^n$ and
\[(L, \nabla)=(\mathcal{L}_0, \nabla_0)\otimes (\mathcal{O}_Y((w-\gamma)D), \rmd((w-\gamma)D)).\]
\end{enumerate}
\end{lemma}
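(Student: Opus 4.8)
The plan is to reduce everything to a bookkeeping statement about parabolic weights of line bundles and then invoke the uniqueness of parabolic $\lambda$-connections from \autoref{thm_UnqiuePara}. First I would construct the decomposition in $(1)$. By definition, the underlying parabolic line bundle is $\mathcal{L} = \{\mathcal{L}_\alpha\}$ with parabolic weights $w_i$ along $D_{Y,i}$; by the classification in \autoref{exa:paraLineBund} we may write $\mathcal{L} = \mathcal{L}_0 \otimes \mathcal{O}_Y(wD)$ where $\mathcal{L}_0$ is the honest line bundle sitting at index $0$ (one checks $\mathcal{L}_\alpha = \mathcal{L}_0(\lfloor \alpha + w\rfloor D)$ from the semicontinuity and normalization hypotheses, since the weight data is precisely $w$). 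Now $\nabla = \{\nabla_\alpha\}$ is a parabolic $\lambda$-connection on $\mathcal{L}$; restricting $\nabla_\infty$ to $\mathcal{L}_0 \subseteq \mathcal{L}_\infty$ gives a logarithmic $\lambda$-connection $\nabla_0$ on $\mathcal{L}_0$ by part $(2)$ of \autoref{thm_NatExtLogConn} (the extension to $\mathcal{L}_\infty = j_{Y*}j_Y^*\mathcal{L}_0$ and its restriction to the twist $\mathcal{L}_0$ are legitimate since $0$ is an integral shift of the parabolic filtration and $\mathcal{L}_0$ is a genuine subsheaf invariant under $\nabla_\infty$ — here one uses that $\nabla$ being a \emph{parabolic} $\lambda$-connection forces $\nabla_\alpha$ to be the restriction of a single connection on $\mathcal{L}_\infty$). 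Then $(\mathcal{L}_0, \nabla_0)\otimes(\mathcal{O}_Y(wD), \lambda\,\rmd(wD))$ is a parabolic $\lambda$-flat line bundle whose $j_Y^*$-restriction agrees with that of $(\mathcal{L},\nabla)$, so by \autoref{thm_UnqiuePara}$(2)$ the two coincide.

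For $(2)$, suppose $(\mathcal{L},\nabla) = (L,\nabla_L)\otimes(\mathcal{O}_Y(\gamma D), \lambda\,\rmd(\gamma D))$ is another such decomposition with $L$ a genuine line bundle (trivial parabolic structure). Comparing underlying parabolic line bundles, $\{L(\lfloor \alpha+\gamma\rfloor D)\}_\alpha = \{\mathcal{L}_0(\lfloor\alpha+w\rfloor D)\}_\alpha$. Reading off the parabolic weights of both sides along each $D_{Y,i}$: the left side has weight $\langle \gamma_i\rangle$ (the fractional part) and the right side has weight $w_i = \langle w_i\rangle$ (recall $w_i\in[0,1)$). Hence $\langle\gamma_i\rangle = w_i$ for all $i$, i.e. $w - \gamma \in \mathbb{Z}^n$. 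With that integrality in hand, $\mathcal{O}_Y(\gamma D) = \mathcal{O}_Y((\gamma - w)D)\otimes\mathcal{O}_Y(wD)$ where $\gamma - w$ is an integral multi-index, so $\mathcal{O}_Y((\gamma-w)D) = \mathcal{O}_Y(\lfloor\gamma-w\rfloor D)$ is an honest line bundle with trivial parabolic structure, and comparing with the decomposition in $(1)$ gives an isomorphism of parabolic line bundles $L \cong \mathcal{L}_0 \otimes \mathcal{O}_Y((w-\gamma)D)$. It remains to upgrade this to an isomorphism of $\lambda$-flat bundles: both $(L,\nabla_L)$ and $(\mathcal{L}_0,\nabla_0)\otimes(\mathcal{O}_Y((w-\gamma)D),\lambda\,\rmd((w-\gamma)D))$ are logarithmic $\lambda$-flat bundles on $(Y,D_Y)/S$ whose restrictions to $U_Y$ are identified (both equal $j_Y^*(\mathcal{L},\nabla)$ twisted back by the same rank-one piece), so \autoref{thm_NatExtLogConn}$(3)$ forces them to be equal as $\lambda$-connections on the same underlying bundle.

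The main obstacle — really the only subtle point — is making precise that a \emph{parabolic} $\lambda$-connection on a parabolic line bundle is the same data as a single logarithmic $\lambda$-connection on the colimit $\mathcal{L}_\infty$ restricted to all the twists $\mathcal{L}_\alpha$, so that the various identifications above are genuinely at the level of $\lambda$-connections and not merely of underlying sheaves. This is exactly where Remark after the definition of parabolic $\lambda$-flat sheaves (item $(1)$, viewing $(V_\alpha,\nabla_\alpha)$ inside $(V_\infty,\nabla_\infty) = j_{Y*}j_Y^*(V_\alpha,\nabla_\alpha)$) and the uniqueness statements of \autoref{thm_NatExtLogConn} and \autoref{thm_UnqiuePara} do the work; once one commits to tracking everything through $j_Y^*$ and appealing to these uniqueness results, the argument is purely formal and the weight computation in $(2)$ is elementary.
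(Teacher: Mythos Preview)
Your proposal is correct and follows essentially the same approach as the paper: both arguments identify the underlying parabolic line bundle via the weight data and then pin down the $\lambda$-connection by restricting to $U_Y$ and invoking the uniqueness in \autoref{thm_UnqiuePara}/\autoref{thm_NatExtLogConn}. The only cosmetic difference is that the paper phrases part~(1) by forming the ``difference'' $(\mathcal{L},\nabla)\otimes(\mathcal{L}_0,\nabla_0)^{-1}\otimes(\mathcal{O}_Y(wD),\rmd(wD))^{-1}$ and showing it equals $(\mathcal{O}_Y,\mathrm{d})$, whereas you build the tensor product directly and compare; these are the same argument in multiplicative disguise.
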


\begin{proof}
Consider $(\Delta V, \Delta \nabla):= (\mathcal{L}, \nabla) \otimes (\mathcal{L}_0, \nabla_0)^{-1} \otimes (\mathcal{O}_Y(w D), \rmd(w D))^{-1}$. From the definition of the tensor product, one checks directly that $\Delta V=\mathcal{O}_Y$, and $\Delta\nabla\mid_{Y\setminus D_Y}=\mathrm{d}\mid_{Y\setminus D_Y}$. Thus, $(\Delta V, \Delta \nabla)=(\mathcal{O}_Y, \mathrm{d})$ and (1) holds true. (2) follows directly from (1)
\end{proof}

\subsubsection{The pullback of parabolic $\lambda$-flat bundles}
Now, we consider the pullback for  parabolic $\lambda$-flat bundles. Let $(V, \nabla)$ be a parabolic $\lambda$-flat bundle on $(Y, D_Y)/S$. For any $\gamma \in \mathbb{Q}^n$, $f^*(\gamma D)$ is also a rational divisor over $Y'$ supported on $D'$. We simply set
\[f^*\Big(\mathcal{O}_Y(\gamma D), \lambda\rmd_Y(\gamma D)\Big) := \Big(\mathcal{O}_{Y'}\big(f^*(\gamma D)\big), \lambda\rmd_{Y'}\big(f^*(\gamma D)\big)\Big).\]
Similarly, for each $\gamma \in \mathbb{Q}^n$, we set
\[f^*_{\gamma}(V, \nabla)
\coloneqq f^*\big(V(-\gamma D)_0, \nabla(-\gamma D)_0\big) \otimes f^*\big(\mathcal{O}_Y(\gamma D),\lambda \rmd_Y(\gamma D)\big).\]
Denote $U_Y=Y-D_Y$ and $U_{Y'}=Y'-D_{Y'}$. If we restrict $f^*_{\gamma}(V, \nabla)$ to the open subset $U_{Y'}$, then by the construction, we get
\[\Big(f^*_{\gamma}(V, \nabla)\Big)\mid_{U_{Y'}}=f^*\Big((V, \nabla)\mid_{U_Y}\Big).\]
By \autoref{thm_NatExtLogConn}, the connections on $f^*_{\gamma}(V, \nabla)$ for different choices of $\gamma$ coincide over the maximal common subsheaf.
\begin{definition} \label{def_pullbackdR}
Let $(V, \nabla)$ be a parabolic $\lambda$-flat bundle over $(Y, D_Y)/S$. We define the \emph{pullback of $(V, \nabla)$ along $f$} as
\[f^*(V, \nabla)\coloneqq \bigcup_{\gamma\in\mathbb{Q}^n} f^*_{\gamma}(V, \nabla).\]
\end{definition}

Notation as in \autoref{setup_PullBackPar}, given a  parabolic $\lambda$-flat bundle $(V, \nabla)$ over $(Y, D_Y)/S$ whose weights are contained in $\frac{1}{N}\mathbb{Z}$. As a sub parabolic vector bundle of $f^*\left(V_0(D_Y)\right)$, $f^*_{par}(V)$ is equipped with a connection $f^*_{par}\nabla$, which is a restriction of $f^* \nabla_0(D_Y)$.

\begin{proposition}\label{thm_ComparePullbackDeRhamBundle}
For any  parabolic $\lambda$-flat bundle $(V, \nabla)$ over $(Y, D_Y)/S$ whose weights are contained in $\frac{1}{N}\mathbb{Z}$, $f_{par}^*(V, \nabla)\cong f^*(V, \nabla)$.
\end{proposition}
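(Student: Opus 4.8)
The plan is to reduce \autoref{thm_ComparePullbackDeRhamBundle} to the already-established isomorphism of underlying parabolic vector bundles in \autoref{Thm_PullBack}, upgrading it to an isomorphism of $\lambda$-flat bundles by invoking the uniqueness of parabolic $\lambda$-connections from \autoref{thm_UnqiuePara}. By \autoref{Thm_PullBack} we already know $f^*V \cong f_{par}^*V$ as parabolic vector bundles (both carrying the trivial parabolic structure), so the only thing left is to check that the two connections agree under this identification. Since both $f_{par}^*(V,\nabla)$ and $f^*(V,\nabla)$ are, by construction, realized as parabolic subsheaves of $f^*(V_\infty, \nabla_\infty)$ equipped with the restriction of a single connection — namely $f^*\nabla_0(D_Y)$ restricted from $f^*(V_0(D_Y))$ in the case of $f_{par}^*$, and $\bigcup_\gamma f^*_\gamma(V,\nabla)$ with its connections coming from $f^*\nabla(-\gamma D)_0$ in the case of $f^*$ — it suffices to verify that both connections restrict to $f^*\big((V,\nabla)\mid_{U_Y}\big)$ on the open complement $U_{Y'} = Y' - D_{Y'}$. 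This is immediate from the two displayed identities just before \autoref{def_pullbackdR} (namely $(f^*_\gamma(V,\nabla))\mid_{U_{Y'}} = f^*((V,\nabla)\mid_{U_Y})$) and the analogous fact for $f_{par}^*$, after which part (2) of \autoref{thm_UnqiuePara} forces the two parabolic $\lambda$-connections to coincide globally.

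Concretely, the steps are as follows. First, by \autoref{Thm_PullBack}, fix an isomorphism $\Phi\colon f^*V \xrightarrow{\ \sim\ } f_{par}^*V$ of parabolic vector bundles over $(Y', D_{Y'})/S$, both with trivial parabolic structure. Second, observe that $f_{par}^*\nabla$ is by definition the restriction to $f_{par}^*V \subseteq f^*(V_0(D_Y))$ of $f^*\nabla_0(D_Y)$, and that $\nabla_0(D_Y)$ restricted over $U_Y$ is just $\nabla\mid_{U_Y}$; hence $(f_{par}^*\nabla)\mid_{U_{Y'}}$ is $f^*(\nabla\mid_{U_Y})$ via the canonical identification $f^{-1}(U_Y) \supseteq U_{Y'}$. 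Third, by the displayed identity preceding \autoref{def_pullbackdR}, the connection on $f^*(V,\nabla) = \bigcup_\gamma f_\gamma^*(V,\nabla)$ also restricts over $U_{Y'}$ to $f^*(\nabla\mid_{U_Y})$. Fourth, transport $f_{par}^*\nabla$ along $\Phi$ to a parabolic $\lambda$-connection $\Phi^*(f_{par}^*\nabla)$ on $f^*V$; by the previous two steps, $j_{Y'}^*\big(\Phi^*(f_{par}^*\nabla)\big) = j_{Y'}^*\big(f^*\nabla\big)$, so \autoref{thm_UnqiuePara}(2) yields $\Phi^*(f_{par}^*\nabla) = f^*\nabla$, i.e. $\Phi$ is an isomorphism of parabolic $\lambda$-flat bundles.

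The only genuine subtlety is the bookkeeping in the third and fourth steps: one must check that $\Phi$, which a priori is supplied only as an isomorphism of parabolic vector bundles, restricts over $U_{Y'}$ to the \emph{identity} (or at least to the canonical isomorphism) after identifying both sides with $f^*(V\mid_{U_Y})$ — otherwise "$\Phi$ transports one connection to the other" is not automatic. This is true because the isomorphism produced in the proof of \autoref{Thm_PullBack} is, by its very construction via localization to parabolic line bundles $\mathcal{O}_Y(\gamma D_Y)$, compatible with the canonical trivialization away from the divisor; I would make this explicit by noting that both $f^*\mathcal{L}$ and $f_{par}^*\mathcal{L}$ are exhibited as the \emph{same} subsheaf $\mathcal{O}_{Y'}(\sum k_i N\gamma_i D_{Y',i})$ of $f^*\mathcal{L}_\infty$, so $\Phi$ is literally the identity on the generic fiber. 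Granting this, the argument closes with no computation. As with \autoref{thm_NatExtLogConn}(3), the local-coordinate check (if one wants it for completeness) reduces to the observation that a $\lambda$-connection on a bundle is determined by its restriction to the complement of a normal crossing divisor, which has already been recorded.
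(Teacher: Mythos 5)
Your proposal is correct and follows essentially the same route as the paper: identify $f^*V$ and $f_{par}^*V$ as the same subsheaf of $f^*\bigl(V_0(D_Y)\bigr)$ via \autoref{Thm_PullBack}, then observe that both connections come from the same ambient data. The paper is even slightly more direct --- since both $f_{par}^*\nabla$ and $f^*\nabla$ are restrictions of the single connection $f^*\nabla_0(D_Y)$ on that common subsheaf, they coincide outright, so the detour through agreement on $U_{Y'}$ and \autoref{thm_UnqiuePara}(2) is harmless but not needed.
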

\begin{proof}
By \autoref{Thm_PullBack}, $f^*V$ and $f_{par}^*V$ are the same sub vector bundle of $f^*V_0\otimes \mathcal{O}(D_Y)$. Both of $f^*_{par}\nabla$ and $f^*\nabla$ are restrictions of $f^* \nabla_0(D_Y)$.
\end{proof}

\subsection{Filtered parabolic de Rham bundles and graded parabolic Higgs bundles}
\subsubsection{Filtered parabolic de Rham vector bundle}
In this paper, a filtration $\Fil^\bullet$ on a pair $(V,\nabla)$
consisting of a parabolic sheaf $V$ and a parabolic connection $\nabla$ on $V$ over $(Y,D_Y)/S$ will be called a \emph{Hodge filtration of level in $[a,b]$} if the following conditions hold:
\begin{itemize}
\item[-] $\Fil^i V$'s are parabolic locally split sub-sheaves of $V$, with
\[V=\Fil^aV\supset \Fil^{a+1}V \supset\cdots \supset \Fil^bV\supset \Fil^{b+1}V=0.\]
\item[-] $\Fil$ satisfies Griffiths transversality with respect to the parabolic connection $\nabla$.
\end{itemize}
In this case, the triple $(V,\nabla,\Fil)$ is called a \emph{filtered parabolic de Rham bundle} if $(V,\nabla)$ is a parabolic de Rham bundle.

\subsubsection{Graded parabolic vector bundles}
Let $\{\Gr^\ell E\}_{\ell\in \mathbb{Z}}$ be parabolic subbundles of a parabolic vector bundle $E$ on $Y$. The pair $(E, \Gr)$ is called \emph{graded parabolic vector bundle} over $Y$ if the natural map $ \oplus_{\ell\in \mathbb{Z}} \Gr^\ell E\cong E$ is an isomorphism.

\subsubsection{Graded parabolic Higgs bundles}
A \emph{graded parabolic Higgs bundle} over $(Y, D_Y)/S$ is a parabolic Higgs bundle $(E, \theta)$ together with a grading structure $\Gr$ on $E$ satisfying
\[\theta(\Gr^\ell E) \subset \Gr^{\ell -1}E\otimes_{\mathcal{O}_Y} \Omega^1_{Y/S}(\log D_Y).\]
The following is the main example we will be concerned with.
\begin{example}
Let $(V, \nabla, \Fil)$ be a filtered parabolic de Rham bundle over $(Y,D_Y)/S$. Let $$E =\bigoplus_{\ell\in \mathbb{Z}} \Fil^\ell V/\Fil^{\ell+1}V,\quad \Gr^\ell E = \Fil^\ell V/\Fil^{\ell+1}V.$$
By Griffith's transversality, for each $\ell\in \mathbb{Z}$, the connection induces an $\mathcal{O}_Y$-linear map $$\theta \colon \Gr^\ell E \rightarrow \Gr^{\ell-1} E \otimes_{\mathcal{O}_Y} \Omega^1_{Y/S}(\log D_Y).$$
Then $(E, \theta, \Gr)$ is a graded parabolic Higgs bundle.
\end{example}

\section{\bf Parabolic Fontaine-Faltings Modules and Parabolic Crystalline Representations} \label{sec_Para_FF}

In this section, we introduce the definitions of parabolic Fontaine-Faltings modules and parabolic crystalline representations.

\begin{setup}\label{setup_PFF and PHDF}
    Let $f$ be a positive integer and let $k$ is a perfect field of characteristic $p>2$ containing $\mathbb F_{p^f}$. Let $W$ be the ring of Witt vectors over $k$ and $W_n=W/p^nW$. Let $S=\Spec W$ and $S_n=\Spec W_n$. Let $Y$ be a smooth separated  scheme over $W$ with a geometrically connected generic fiber. Let $D_Y\subseteq Y$ be a relative normal crossing divisor $D_Y$ with complement $U_Y$. Denote by
\[(Y_n, D_{Y_n})\coloneqq (Y, D_Y)\times_SS_n.\]
Let $\mathcal {(Y,D_Y,U_Y)}$ be the $p$-adic formal completion of ${(Y,D_Y,U_Y)}$ with rigid generic fiber $(\mathcal Y_K,\mathcal{D}_{\mathcal Y_K},\mathcal{U}_{\mathcal Y_K} )$. Denote by
\[\mathcal Y_K^\circ\coloneqq\mathcal Y_K-\mathcal{D}_{\mathcal Y_K}.\]

\end{setup}

\subsection{Introduction to Fontaine-Faltings modules}
In this subsection, we review the notion of Fontaine-Faltings module.
We follow closely (both in notation and in exposition) the foundational work of Faltings \cite{Fal89}.
See also \cite[Section 1]{SYZ22} and \cite[Section 2]{LSZ19}.

\subsubsection{Fontaine-Faltings modules over a small affine base.}
\label{sect:FF/small}

We first recall the notion of a Fontaine-Faltings module over a small affine scheme.
Assume $Y$ is a \emph{small} affine scheme,
i.e., if $Y=\mathrm{Spec}(R)$,
then there exists an \'etale map
\[W[T_1^{\pm1},T_2^{\pm1},\cdots, T_{d}^{\pm1}]\rightarrow R,\]
over $W$ (see \cite[p. 27]{Fal89}).
In general, a smooth affine scheme over $W$ is not always small but it can be covered by a system of small affine open subsets.
By the existence of the \'etale chart there exists some $\Phi:\widehat{R}\rightarrow\widehat{R}$ which lifts the absolute Frobenius on $R/pR$, where $\widehat{R}$ is the $p$-adic completion of $R$.

A \emph{Fontaine-Faltings module} over the $p$-adic formal completion $\mathcal Y=\mathrm{Spf}(\widehat{R})$ of $Y$ with Hodge-Tate weights in $[a,b]$ is a quadruple $(V,\nabla,\Fil,\varphi)$, where
\begin{itemize}
\item[-] $(V,\nabla)$ is a de Rham $\widehat{R}$-module;
\item[-] $\Fil$ is a Hodge filtration on $(V,\nabla)$ of level in $[a,b]$;
\item[-] $\widetilde{V}$ is the quotient $\bigoplus\limits_{i=a}^b\Fil^i/\sim$. Here, $px\sim y$ for $x\in\Fil^iV$ where $y$ is the image of $x$ under the natural inclusion $\Fil^iV\hookrightarrow\Fil^{i-1}V$;
\item[-] $\varphi$ is an $\widehat{R}$-linear isomorphism \[\varphi:\widetilde{V}\otimes_{\Phi}\widehat{R} \longrightarrow V,\]
\item[-] The relative Frobenius $\varphi$ is horizontal with respect to the connections.
\end{itemize}
(The fact that $\varphi$ is an isomorphism is sometimes known as \emph{strong $p$-divisibility.}) A morphism between Fontaine-Faltings modules is a morphism between the underlying de Rham modules which is strict for the filtrations and commutes with the $\varphi$-structures. Denote by $\mathcal {MF}_{[a,b]}^{\nabla,\Phi}(\mathcal Y/W)$ the category of all Fontaine-Faltings modules over $\mathcal Y$ of Hodge-Tate weights in $[a,b]$. The $p$-primary torsion version of this definition was first written down in \cite[p. 30-31]{Fal89}; here we follow \cite[Section 3]{Fal99}, see also \cite[Section 2]{SYZ22} and \cite[Section 2]{LSZ19}.

\subsubsection{The gluing functor.} In the following, we recall the gluing functor of Faltings. In other words, up to a canonical equivalence of categories, if $b-a\leq p-2$, the category $\mathcal {MF}_{[a,b]}^{\nabla,\Phi}(\mathcal Y/W)$ does not depend on the choice of $\Phi$. More explicitly, the functor yielding an equivalence is given as follows. Let $\Psi$ be another lifting of the absolute Frobenius. For any filtered de Rham module $(V,\nabla,\Fil)$, Faltings~\cite[Theorem~2.3]{Fal89} shows that there is a canonical isomorphism by the Taylor formula
\[\alpha_{\Phi,\Psi}: \widetilde{V}\otimes_\Phi\widehat{R} \simeq \widetilde{V}\otimes_\Psi \widehat{R},\]
which is \emph{compatible} with respect to the connection, satisfies the cocycle conditions and induces an equivalence of categories
\begin{equation}
\xymatrix@R=0mm{ \mathcal {MF}_{[a,b]}^{\nabla,\Psi}(\mathcal Y/W)\ar[r] & \mathcal {MF}_{[a,b]}^{\nabla,\Phi}(\mathcal Y/W).\\
(V,\nabla,\Fil,\varphi)\ar@{|->}[r] & (V,\nabla,\Fil,\varphi\circ\alpha_{\Phi,\Psi})\\}
\end{equation}

\subsubsection{Fontaine-Faltings modules over a global base.}
\label{sect:FF/gl}

In this section, we do not assume $Y$ is small.
Let $\{\mathcal U_i\}_{i\in I}$ be a connected small affine open covering of $\mathcal Y$
with $\Phi_i$ a lift of the absolute Frobenius on $\mathcal O_\mathcal Y(\mathcal U_i)\otimes_W k$ for any $i\in I$.
Recall that the category $\mathcal {MF}_{[a,b]}^{\nabla}(\mathcal Y/W)$ is constructed by gluing the categories $\mathcal {MF}_{[a,b]}^{\nabla,\Phi_i}(\mathcal U_i/W)$.
Actually $\mathcal {MF}_{[a,b]}^{\nabla}(\mathcal Y/W)$ can be described more precisely as follows.
A Fontaine-Faltings module over $\mathcal Y$ of Hodge-Tate weights in $[a,b]$ is a tuple $(V,\nabla,\Fil,\{\varphi_i\}_{i\in I})$,
i.e., a filtered de Rham sheaf $(V,\nabla,\Fil)$ over $\mathcal Y$ together with
$\varphi_i: \widetilde{V}(\mathcal U_i)\otimes_{\Phi_i} \widehat{\mathcal O_\mathcal Y(\mathcal U_i)}\rightarrow V(\mathcal U_i)$
such that

\begin{itemize}
\item[-] $M_i:=(V(\mathcal U_i),\nabla,\Fil,\varphi_i)\in \mathcal {MF}_{[a,b]}^{\nabla,\Phi_i}(\mathcal U_i/W)$.

\item[-] For all $i,j\in I$, on the open intersection $\mathcal U_i\cap \mathcal U_j$, the Fontaine-Faltings modules $M_i\mid_{\mathcal U_{i}\cap \mathcal U_j}$ and $M_j\mid_{\mathcal U_{i}\cap \mathcal U_j}$ are associated to each other under the above equivalence of categories with respect to the two Frobenius liftings $\Phi_i$ and $\Phi_j$ on $\mathcal U_i\cap \mathcal U_j$.
\end{itemize}
Denote by $\mathcal {MF}_{[a,b]}^{\nabla}(\mathcal Y/W)$ the category of all Fontaine-Faltings modules over $\mathcal Y$ of Hodge-Tate weights in $[a,b]$. For any Fontaine-Faltings module $M=(V,\nabla,\Fil,\{\varphi_i\}_{i\in I})$ on $\mathcal Y$, $V$ is locally isomorphic to a direct sum of sheaves of the form $\mathcal O_{\mathcal Y}/p^e$ by \cite[Theorem 2.1]{Fal89}. $M$ is called a \emph{strict $p^n$-torsion Fontaine-Faltings module} if $V$ is a locally free $\mathcal O_{Y_n}$-module. A \emph{$W_n(\mathbb F_{p^f})$-endomorphism structure on a Fontaine-Faltings module $M$}  is an embedding of $\mathbb Z/p^n \mathbb Z$-algebra
\[W_n(\mathbb F_{p^f})\hookrightarrow \operatorname{End}(M).\]

\subsection{Parabolic Fontaine-Faltings modules}\label{sec_PFFandPHDF}
To define parabolic Fontaine-Faltings modules, we need the parabolic versions of Faltings' tilde functor $\widetilde{(\cdot)}$ and Frobenius pullback functor $\mathcal{F}_n$. We first introduce the following parabolic versions of the categories.
\begin{definition}
    \begin{itemize}
        \item Denote by \emph{$\MCF^{par}_{p-2}(Y_n)$} the category of triples $(V,\nabla,\Fil)$, each consisting of a parabolic vector bundle $V$ on $(Y_n,D_{Y_n})/S_n$, a parabolic connection $\nabla$ on $V$ (not necessarily integrable) and Hodge filtration $\Fil$  of level in $[0,p-2]$.
        \item Denote by \emph{$\MIC^{par}(Y_n)$} the category of all parabolic de Rham bundles over $(Y_n, D_{Y_n})/S_n$.
        \item Denote by \emph{$\widetilde{\MIC}^{par}(Y_n)$} the category of all parabolic $p$-flat bundles (i.e. $\lambda$-flat bundles with $\lambda=p$) over $(Y_n, D_{Y_n})/S_n$.
        \item Denote by \emph{$\widetilde{\MC}^{par}(Y_n)$} the category  whose objects satisfy the same conditions as those of $\widetilde{\MIC}^{par}(Y_n)$, except that the integrability condition is not imposed.
    \end{itemize}
\end{definition}

\subsubsection{Faltings' tilde functor $\widetilde{(\cdot)}$}
For an object $(V, \nabla, \Fil)$ in $\MCF^{par}_{p-2}(Y_n)$ which has  trivial parabolic structure, $\widetilde{V}$ is denoted as the quotient $\bigoplus\limits_{i=0}^{p-2}\Fil^i/\sim$ with $px\sim y$ for any $x\in \Fil^iV$ and $y$ the image of $x$ under the natural inclusion $\Fil^iV\hookrightarrow \Fil^{i-1}V$. Let $s$ be a section of $\Fil_i$ and denote by $\tilde{s}$(resp. $s'$) its image under $\Fil^i\to\widetilde{V}$(resp. $\Fil^i\to\Fil^{i-1}$).
$\nabla$ induces a $\mathcal O_{Y_n}$-linear map $\widetilde{\nabla}:\widetilde{V}\to \widetilde{V}\otimes \Omega_{Y_n/S_n}(\log D_{Y_n})$ by setting $$\widetilde\nabla(\tilde{s})\coloneqq \widetilde{\nabla(s)}.$$
For any local section $f$ of $\mathcal O_{Y_n}$,
\[\widetilde\nabla(f\tilde{s})=\widetilde{\nabla(fs)}=f\widetilde{\nabla(s)}+\widetilde{s'}\otimes \rmd f=f\widetilde{\nabla}(\tilde{s})+p\tilde{s}\otimes \rmd f,\] thus $\widetilde{\nabla}$ is a $p$-connection. Then
\[\widetilde{(V,\nabla,\Fil)}\coloneqq (\widetilde V,\widetilde \nabla).\]
The functor  $\widetilde{(\cdot)}$ preserves integrability condition.
More generally, for any object $(V, \nabla, \Fil)$ in $\MCF^{par}_{p-2}(Y_n)$ and $\alpha\in \mathbb Q^n$, we denote
\[\widetilde{(V, \nabla, \Fil)}_{\alpha}:= \widetilde{(V_{\alpha}, \nabla_{\alpha}, \Fil_{\alpha})}.\]
By the following lemma, $\{\widetilde{(V, \nabla, \Fil)}_{\alpha}\}_{\alpha \in \mathbb Q^n}$ forms a parabolic $p$-connection. Thus we can define the Faltings' tilde functor $\widetilde{(\cdot)}$ for parabolic bundle by setting
\[\widetilde{(V, \nabla, \Fil)}\coloneqq \{\widetilde{(V, \nabla, \Fil)}_{\alpha}\}_{\alpha\in \mathbb Q^n}.\]

\begin{lemma}\label{Prop_FaltingTilde}
    The collection of pairs $\{\widetilde{(V, \nabla, \Fil)}_{\alpha}\}_{\alpha\in \mathbb Q^n}$, each comprising a vector bundle and a $p$-connection, forms an object in $\widetilde{\MC}^{par}(Y_n)$.
\end{lemma}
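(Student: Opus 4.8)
The plan is to verify the three axioms (Normalization, Semicontinuity, Rationality) for $\{\widetilde{(V,\nabla,\Fil)}_\alpha\}_\alpha$, together with the compatibility of the $p$-connections with the transition maps, so that the collection genuinely defines a parabolic $p$-flat bundle (and hence an object of $\widetilde{\MC}^{par}(Y_n)$, where integrability is not required). First I would unwind the definition: for each $\alpha$, the triple $(V_\alpha,\nabla_\alpha,\Fil_\alpha)$ has \emph{trivial} parabolic structure (it is an honest filtered de Rham bundle on $Y_n$), so $\widetilde{(V_\alpha,\nabla_\alpha,\Fil_\alpha)}=(\widetilde{V_\alpha},\widetilde{\nabla_\alpha})$ is already constructed above, with $\widetilde{\nabla_\alpha}$ a genuine $p$-connection. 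What must be checked is that these assemble into a parabolic object.

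The key step is to trace how the tilde construction interacts with the inclusions $V_\alpha\hookrightarrow V_\beta$ for $\alpha\le\beta$ and with the shift $V_{\alpha+\delta^i}=V_\alpha(D_{Y_n,i})$. For the transition maps: since $\Fil$ is a filtration by parabolic locally split subsheaves, we have $\Fil^i V_\alpha=\Fil^i V_\infty\cap V_\alpha$ (the filtration on each level is induced from $V_\infty$), so the inclusion $V_\alpha\hookrightarrow V_\beta$ is strict for the filtrations; applying $\bigoplus_i\Fil^i/\!\sim$ functorially yields maps $\widetilde{V_\alpha}\to\widetilde{V_\beta}$ compatible with $\widetilde{\nabla_\alpha},\widetilde{\nabla_\beta}$ by the formula $\widetilde{\nabla}(\tilde s)=\widetilde{\nabla(s)}$, and these maps have $S_n$-flat cokernel because the construction is, Zariski-locally on $Y_n$, a direct sum of the line-bundle case where it is transparent. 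For Normalization, I would compute $\widetilde{(V_{\alpha+\delta^i},\nabla_{\alpha+\delta^i},\Fil_{\alpha+\delta^i})}$: tensoring a filtered de Rham bundle by $(\mathcal O_{Y_n}(D_{Y_n,i}),\rmd)$ shifts each $\Fil^j$ by $\mathcal O(D_{Y_n,i})$, and since $\widetilde{(\cdot)}$ is compatible with tensoring by an invertible de Rham bundle placed in filtration degree $0$ (the relation $px\sim y$ is unaffected), one gets $\widetilde{V_{\alpha+\delta^i}}=\widetilde{V_\alpha}(D_{Y_n,i})$ compatibly with connections. Semicontinuity and Rationality are immediate: the jump set of $\{\widetilde{V_\alpha}\}$ is contained in the jump set of $\{V_\alpha\}$ (if $V_{\alpha+\epsilon}=V_\alpha$ then the whole filtered package agrees, hence so do the tildes), so the same constants $c(\alpha)$ and the same rational infima work.

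The main obstacle I anticipate is the \emph{local-triviality} clause in the definition of a parabolic \emph{vector bundle}: one must show $\widetilde{V}=\{\widetilde{V_\alpha}\}$ is, Zariski-locally on $Y_n$, a direct sum of parabolic line bundles, not merely a parabolic sheaf. Here I would argue as follows: locally $V$ splits as a direct sum of parabolic line bundles $\mathcal L^{(r)}=L^{(r)}(\gamma^{(r)}D_{Y_n})$, and by refining the cover one may assume the Hodge filtration respects this decomposition (this uses that $\Fil$ is parabolic locally split, so after localizing each $\Fil^i$ is a sub-sum). For a single parabolic line bundle with filtration concentrated in one degree $j$, $\widetilde{(\cdot)}$ multiplies the underlying bundle by $p^{-j}$ in the $\sim$-quotient, i.e. returns again a parabolic line bundle (with the same parabolic weight data, the connection rescaled to a $p$-connection); hence the direct sum is locally a sum of parabolic line bundles. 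Assembling these local computations with the global compatibility of $\widetilde{\nabla_\alpha}$ checked above gives that $\{\widetilde{(V,\nabla,\Fil)}_\alpha\}$ is a parabolic $p$-flat bundle with no integrability imposed, i.e. an object of $\widetilde{\MC}^{par}(Y_n)$, which is the assertion.
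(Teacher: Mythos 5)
Your proposal is correct and follows essentially the same route as the paper: reduce by functoriality to checking the underlying sheaves, localize so that $V$ and the filtration split into parabolic line bundles, and observe that for a parabolic line bundle with one-step filtration the tilde construction returns a parabolic line bundle with the same weights. The extra verification you give of the parabolic-sheaf axioms (normalization, semicontinuity, rationality) is detail the paper leaves implicit, but the key argument coincides.
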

\begin{proof}
    By the functoriality of $\widetilde{(\cdot)}$, it suffices to verify that $\{\widetilde V_{\alpha}\}_{\alpha\in \mathbb Q^n}$ is a parabolic bundle.
    The problem is local on $Y_n$, we can assume that $\Fil^i$ ($i=0,1,\ldots, p-2$) is isomorphic to a direct sum of parabolic line bundles and splitting. Moreover, as the functor $\widetilde{(\cdot)}$ preserves direct sum of vector bundles, we can further assume that $V\cong \mathcal O_{Y_n}(\gamma D_{Y_n})$ is a parabolic line bundle with $\gamma \in ([0,1)\cap \mathbb Q)^n$,   $\Fil^k=V$  and $\Fil^{k+1}=0$. Then for any $\alpha \in \mathbb Q^n$,
    \[\widetilde V_{\alpha}= \mathcal O_{Y_n}(\lfloor \alpha+\gamma\rfloor D_{Y_n})^{\sim}\cong \mathcal O_{Y_n}(\lfloor \alpha+\gamma\rfloor D_{Y_n}).\]
    Thus $\widetilde V$ is parabolic bundle sharing the same parabolic weights as $V$.
\end{proof}
\begin{remark}
   By \autoref{Prop_FaltingTilde},  $\widetilde{(\cdot)}$ is restricted to be a functor from the category $\MIC^{par}_{p-2}(Y_n)$ to the category
   $\widetilde{\MIC}^{par}(Y_n)$.
\end{remark}

\subsubsection{The Frobenius pullback functor $\mathcal{F}_n$}
Let $(\widetilde{V}, \widetilde{\nabla})$ be an object in $\widetilde{\MIC}^{par}(Y_n)$, i.e., a parabolic $p$-flat bundle.  Since $(\widetilde{V}_\alpha, \widetilde{\nabla}_\alpha)$ is $p$-flat bundle with trivial parabolic structure for any $\alpha\in \mathbb Q^n$, by usual Frobenius pullback in \cite[Proposition 4.15]{LSZ19}, one obtains a logarithmic de Rham bundle
\[\mathcal F_n(\widetilde{V}_\alpha, \widetilde{\nabla}_\alpha)\]
on $(Y_{n},D_{Y_n})$.  Since the parabolic structure are supported on $D_{Y_n}$. the restriction of  $(\widetilde{V}_\alpha, \widetilde{\nabla}_\alpha)$ to $U_{Y_n}=Y_{n} - D_{Y_n}$ is a usual vector bundle and independent of the choice of $\alpha$, we simply denote it by $(\widetilde{V}, \widetilde{\nabla})\mid_{U_{Y_n}}$.
For any $\alpha\in \mathbb Q^n$,
$$\mathcal F_n(\widetilde{V}_\alpha, \widetilde{\nabla}_\alpha)\mid_{U_{Y_n}}
= \mathcal F_n(\widetilde{V}\mid_{U_{Y_n}}, \widetilde{\nabla}\mid_{U_{Y_n}}).$$
Thus $\mathcal F_n(\widetilde{V}_\alpha, \widetilde{\nabla}_\alpha)$ can be viewed as a sub de Rham sheaf of
\[j_{Y_n *}\left(\mathcal F_n (\widetilde{V}\mid_{U_{Y_n}}, \widetilde{\nabla}\mid_{U_{Y_n}})\right),\]
where $j_{Y_n}\colon U_{Y_n}\rightarrow Y_n$ is the natural open embedding.

Let $\mathcal{U}=\{U_{n+1,\mu}\}_{\mu\in \Lambda}$ be an open covering of $Y_{n+1}$ such that for any $U_{n+1, \mu}$, there is a morphism ${\Phi}_{n+1, \mu}:U_{n+1, \mu}\to U_{n+1, \mu}$ which lifts the absolute Frobenius on ${U_{n,\mu}}\coloneqq U_{n+1, \mu}\otimes \mathbb{Z}/p\mathbb{Z}$ and preserves the divisor $D_{Y_{n+1}}$ in the sense that $$\Phi_{n+1,\mu}^*(D_{Y_{n+1}}\cap U_{n+1,\mu}) = p(D_{Y_{n+1}}\cap U_{n+1,\mu}).$$
Set
\[U_{n,\mu}\coloneqq U_{n+1, \mu}\otimes \mathbb{Z}/p^n\mathbb{Z}, \quad \Phi_{n,\mu}\coloneqq \Phi_{n+1,\mu}\otimes \mathbb{Z}/p^n\mathbb{Z}, \quad
V\mid_{U_{n,\mu}}\coloneqq \Phi_{n,\mu}^*(\widetilde{V}\mid_{U_{n,\mu}}).\]
By \autoref{prop:PullbackParabolicBundles},
\[V\mid_{U_{n,\mu}} = \bigcup_{\gamma\in\mathbb Q^n} \Phi_{n,\mu}^*\left((\widetilde{V}(-\gamma D_{Y_n})_0)\mid_{U_{n,\mu}}\right)\otimes \Phi_{n,\mu}^*(\mathcal O_Y(\gamma D_{Y_n})\mid_{U_{n,\mu}}) \]
 Since the ${\Phi}_{n, \mu}$ preserves the divisor $D_{Y_n}$, one has
 \[\Phi_{n,\mu}^*(\mathcal O_Y(\gamma D_{Y_n})\mid_{U_{n,\mu}}) = \mathcal O_Y(p\gamma D_{Y_n})\mid_{U_{n,\mu}}.\]
Denote by
\[(V_{\gamma,\mu},\nabla_{\gamma,\mu})\coloneqq \mathcal F_{n} \left(\widetilde{V}(-\gamma D_{Y_n})_0 \mid_{U_{n,\mu}}, \widetilde{\nabla}(-\gamma D_{Y_n})_0 \mid_{U_{n,\mu}}\right).\]
Then we can defined the Frobenius pullback functor for $(\widetilde{V}, \widetilde{\nabla})\mid_{U_{n,\mu}}$ by setting
\[\mathcal F_n\left((\widetilde{V}, \widetilde{\nabla})\mid_{U_{n,\mu}}\right)\coloneqq \bigcup_{\gamma\in\mathbb Q^n} (V_{\gamma,\mu},\nabla_{\gamma,\mu}) \otimes \big( \mathcal{O}_Y(p\gamma D_{Y_n}), \rmd(p\gamma D_{Y_n})\big)\mid_{U_{n,\mu}}. \]
Since $(\widetilde V,\widetilde \nabla)$ can be regarded as a sub parabolic $p$-flat sheaf of  $(\widetilde V',\widetilde \nabla')\coloneqq(\widetilde V_0(D_{Y_n}),\widetilde \nabla_0(D_{Y_n}))$, and there exists a family of isomorphisms
\[\left\{G_{\mu\tau}:\mathcal F_n\left(\big(\widetilde V',\widetilde \nabla'\big)\big|_{U_{n,\mu}}\right)\big|_{U_{n,\mu}\cap U_{n,\tau}} \cong \mathcal F_n\left(\big(\widetilde V',\widetilde \nabla'\big)\big|_{U_{n,\tau}}\right)\big|_{U_{n,\mu}\cap U_{n,\tau}}\right\}_{\mu,\tau\in \Lambda}\]
satisfying the cocycle condition and gluing $\left\{\mathcal F_n\left(\big(\widetilde V',\widetilde \nabla'\big)\big|_{U_{n,\mu}}\right)\right\}_{\mu\in \Lambda}$ to be a global object $\mathcal F_n(\widetilde V',\widetilde \nabla')$ by \cite[Proposition 4.15]{LSZ19}, $\{G_{\mu\tau}\}_{\mu,\tau\in \Lambda}$ also glues $\left\{\mathcal F_n\left(\big(\widetilde V,\widetilde \nabla\big)\big|_{U_{n,\mu}}\right)\right\}_{\mu\in \Lambda}$ to be a  parabolic de Rham bundle which we denote by $\mathcal F_n(\widetilde V,\widetilde \nabla)$. It is a sub parabolic de Rham sheaf of $\mathcal F_n(\widetilde V',\widetilde \nabla')$. Thus we have defined the parabolic Frobenius pullback functor from the category $\widetilde{\MIC}^{par}(Y_n)$ to the category
$\MIC^{par}_{p-2}(Y_n)$.

\subsubsection{Parabolic Fontaine-Faltings modules with endomorphism structure}
Recall \cite[Lemma 1.1]{SYZ22} or \cite[Lemma 5.6]{LSZ19}. We extend the definition to the parabolic version.
\begin{definition}
A \emph{strict $p^n$-torsion parabolic Fontaine-Faltings module over $(Y_n, D_{Y_n})/S_n$} is a tuple $(V, \nabla, \Fil, \varphi)$, where
\begin{itemize}
\item $(V, \nabla, \Fil)$ is a filtered parabolic de Rham bundle over $(Y_n, D_{Y_n})/S_n$ of level in $[0, p-2]$;
\item $\varphi\colon \mathcal{F}_n\widetilde{(V,\nabla,\Fil)}\rightarrow (V, \nabla)$ is an isomorphism of parabolic de Rham bundles.
\end{itemize}
We denote by \emph{$\MF_{[0, p-2]}^\nabla((Y_n, D_{Y_n})/S_n)$} the category of all strict $p^n$-torsion parabolic Fontaine-Faltings modules over $(Y_n, D_{Y_n})/S_n$. By taking inverse limit, one obtains the category of \emph{torsion-free parabolic Fontaine-Faltings modules}  \emph{$\MF_{[0, p-2]}^\nabla((\mathcal Y, \mathcal{D_{Y}})/S)$}, whose objects are tuples of the form $(V, \nabla, \Fil, \{\varphi_n\}_{n\in\mathbb Z_{\geq 0}})$ such that
\begin{itemize}
\item $(V, \nabla, \Fil)$ is a filtered parabolic de Rham bundle over $(\mathcal Y, \mathcal{D_{Y}})/S$ of level in $[0, p-2]$;
\item $\left((V, \nabla, \Fil)\otimes \mathbb Z/p^n \mathbb Z,\varphi_n\right)$ is an object in $\MF_{[0, p-2]}^\nabla((Y_n, D_{Y_n})/S_n)$ for any $n\in \mathbb Z_{\geq 0}$;
\item $\varphi_{n-1}=\varphi_{n}\otimes \mathbb Z /p^{n-1}\mathbb Z$.
\end{itemize}
\end{definition}

\begin{definition}
Let $M\in \MF_{[0, p-2]}^\nabla((Y_n, D_{Y_n})/S_n)$. A \emph{$W_n(\mathbb F_{p^f})$-endomorphism structure} is an
embedding of $\mathbb Z/p^n\mathbb Z$-algebra
\[\iota\colon W_n(\mathbb F_{p^f})\hookrightarrow \End(M).\]
Denote by \emph{$\MF_{[0, p-2],f}^\nabla((Y_n, D_{Y_n})/S_n)$} the category of all strict $p^n$-torsion parabolic Fontaine-Faltings modules with $W_n(\mathbb F_{p^f})$-endomorphism structures over $(Y_n, D_{Y_n})/S_n$.

Similarly, when $M$ is a torsion-free parabolic Fontaine-Faltings module over $(\mathcal Y,\mathcal{D_Y})/S$, A \emph{$W(\mathbb F_{p^f})$-endomorphism structure} of $M$ is an embedding of $\mathbb Z_p$-algebra
\[\iota\colon W(\mathbb F_{p^f})\hookrightarrow \End(M).\]
Denote by \emph{$\MF_{[0, p-2],f}^\nabla((\mathcal Y, \mathcal{D_{Y}})/S)$} the the category of all torsion-free parabolic Fontaine-Faltings modules with $W(\mathbb F_{p^f})$-endomorphism structures over $(\mathcal Y, \mathcal{D_{Y}})/S$.
\end{definition}

Let $(V, \nabla, \Fil, \varphi)$ be a strict $p^n$-torsion parabolic Fontaine-Faltings module with level in $[0, a]$ and $(V, \nabla, \Fil, \varphi)'$ be another strict $p^n$-torsion parabolic Fontaine-Faltings module with level in $[0, b]$ such that $a+b\leq p-2$. Then on the tensor parabolic de Rham bundle
\[(V, \nabla)\otimes (V, \nabla)'.\]
Set
\[\Fil_{tot}^n(V\otimes V'):= \sum_{i+j=n}\Fil^iV\otimes \Fil^jV\quad \text{for all $n\in\mathbb{Z}$},\]
it forms a Hodge filtration on the parabolic de Rham bundle $(V, \nabla)\otimes (V, \nabla)'$. Denote
\[(V, \nabla, \Fil)\otimes (V, \nabla, \Fil)':=(V\otimes V', \nabla\otimes\id +\id\otimes\nabla', \Fil_{tot}).\]
By the definition of the parabolic version of the inverse Cartier functor, it naturally preserves tensor products, so we can set a Frobenius structure on $(V, \nabla, \Fil)\otimes (V, \nabla, \Fil)'$ given by $\varphi\otimes\varphi'$, and then obtain the tensor product
\[(V, \nabla, \Fil, \varphi) \otimes (V, \nabla, \Fil, \varphi)'.\]
For the tensor product of Fontaine-Faltings modules with endomorphism structure, we set the underlying parabolic Fontaine-Faltings module of
\[(V, \nabla, \Fil, \varphi, \iota) \otimes (V, \nabla, \Fil, \varphi, \iota)'\]
to be
\[\Big((V, \nabla, \Fil, \varphi) \otimes (V, \nabla, \Fil, \varphi)'\Big)^{\iota=\iota'}.\]
on this, the action of $\iota$ and $\iota'$ coincides, and we use this common action to define the $\mathbb{Z}_{p^f}$-endomorphism structure.

We also define the symmetric product, wedge product, and determinant for parabolic Fontaine-Faltings modules in the usual way.

\subsection{Parabolic $\mathbb D^{par}$-functor and parabolic crystalline representations}
As a $p$-adic analogous of Hilbert-Riemann correspondence, Faltings introduced a fully faithful functor $\mathbb D$ from the category of Fontaine-Faltings modules over $\mathcal Y$ to the category of  continuous $\mathbb Z_p$-representations of $\pi_1^{\et}(\mathcal Y_K)$ (\cite[Theorem 2.6*]{Fal89}).
As a generalization of Faltings' work,
we constructed the logarithmic $\mathbb D^{\log}$-functor from the category of logarithmic Fontaine-Faltings modules over $\mathcal {(Y,D_Y)}$ to the category of continuous $\mathbb Z_p$-representations of $\pi_1^{\et}(\mathcal Y_K^\circ)$ in \cite[Theorem 2.9]{LYZ25}.

For any $p^n$-torsion logarithmic Fontaine-Faltings module $M$ over $\mathcal {(Y,D_Y)}$, the construction of the $\mathbb Z_p$-representations $\mathbb D^{\log}(M)$ of $\pi_1^{\et}(\mathcal Y_K^\circ)$ is  divided into the following two steps:

\begin{enumerate}[Step 1:]
    \item
    First assuming that that $Y$ is small and affine. Choose a finite flat cover $\pi: \mathcal Y'\to \mathcal Y$  such that $\pi$ is a \'etale outside $\mathcal Y'-\pi^{-1}(\mathcal{D_Y} )$ and the coefficients of the pullback divisor $\pi^*\mathcal D_{\mathcal Y}$ is divisible by $p^n$. Then $\pi^*M$ has no poles (\cite[page 15]{LYZ25}).
    Applying the Faltings' $\mathbb D$-functor to the usual Fontaine-Faltings module $\pi^*M$ on $\mathcal Y'$, one gets a $\mathbb Z_p$-module $\mathbb D(\pi^*M)$ which is equipped with compatible actions of
    $\pi_1^{\et}(\mathcal U_{\mathcal Y_K})$ and  $\pi_1^{\et}(\mathcal Y'_K)$. By \cite[Lemma 2.4]{LYZ25}, the two compatible actions induced an action of $\pi_1^{\et}({\mathcal Y_K^\circ})$.
    \item
    Generally, choose a small affine covering of $Y$.  On each open subset $U$ , one gets a $\mathbb Z_p$-representation of $\pi_1^{\et}(\mathcal U_K)$, where $\mathcal U_K$ is the rigid generic fiber of the $p$-adic formal completion $\mathcal U$ of $U$. By \cite[Proposition 2.7]{LYZ25}, these local representations are compatible and thus can be glued to form  a global representation of  $\pi_1^{\et}(\mathcal Y_K^\circ)$.
\end{enumerate}

The same method can be applied to a strict $p^n$-torsion parabolic Fontaine-Faltings modules $M$ whose all parabolic weights are contained in $\frac{1}{N}\mathbb Z$.
The only difference is to replace the cover $\pi$ by a cover $\pi':\mathcal Y''\to\mathcal Y$ such that $\pi$ is a \'etale outside $\mathcal Y'-\pi'^{-1}(\mathcal{D_Y} )$ and the coefficients of the pullback divisor $\pi'^*\mathcal D_{\mathcal Y}$ is divisible by $N p^n$. In this case, $\pi'^*M$ is Fontaine-Faltings module with no poles and trivial parabolic structure. The remaining part of the proof follows that in \cite{LYZ25}, and eventually we can prove the following theorem:

\begin{theorem}\label{thm_ParabolicDFunctor}
We keep the same notation as in \autoref{setup_PFF and PHDF}.
\begin{enumerate}
\item
There is a functor $\mathbb D^{par}$ from the category strict $p^n$-torsion parabolic Fontaine-Faltings modules over $(Y_n, D_{Y_n})/S_n$ (reps. torsion-free parabolic Fontaine-Faltings modules over $(\mathcal Y,\mathcal{D_Y})/S$)
to the category of continuous $\mathbb Z/p^n \mathbb Z$-representations (reps. $\mathbb Z_p$-representations) of $\pi_1^{\et}(\mathcal Y_K^\circ)$. This functor is compatible with usual Faltings' $\mathbb D$-functor in the following sense: for any strict $p^n$-torsion Fontaine-Faltings modules $M$ over $(Y_n,D_{Y_n})/S_n$ (reps. torsion-free parabolic Fontaine-Faltings modules $M$ over $(\mathcal Y,\mathcal{D_Y})/S$), there is a canonical isomorphism of continuous $\mathbb Z/p^n \mathbb Z$-representation (reps. $\mathbb Z_p$-representations) of $\pi_1^{\et}(\mathcal U_{\mathcal Y_K})$:
\[\mathbb D^{par}(M)\mid_{\mathcal U_{\mathcal Y_K}}\cong \mathbb D(M\mid_{ Y_n-D_{Y_n}}) \quad \left(\text{reps. }\mathbb D^{par}(M)\mid_{\mathcal U_{\mathcal Y_K}}\cong \mathbb D(M\mid_{ \mathcal Y-\mathcal{D_Y}})\right).\]
The representations in the essential image of $\mathbb D^{par}$ are called \emph{parabolic crystalline representations}.

\item
If $Y$ is proper over $W$, then the functor in (1) can be algebraized  into a functor from the category strict $p^n$-torsion parabolic Fontaine-Faltings modules over $(Y_n, D_{Y_n})/S_n$ (reps. torsion-free parabolic Fontaine-Faltings modules over $(\mathcal Y,\mathcal{D_Y})/S$)
to the category of continuous $\mathbb Z/p^n \mathbb Z$-representations (reps. $\mathbb Z_p$-representations) of $\pi_1^{\et}(Y_K^\circ)$, where $Y_K^\circ$ is the generic fiber of $Y-D_Y$.
\end{enumerate}
\end{theorem}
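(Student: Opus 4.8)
The plan is to follow the two-step construction of the logarithmic $\mathbb D^{\log}$-functor of \cite{LYZ25} essentially verbatim, modifying only the pole-killing cover so that it simultaneously trivializes the parabolic structure. Fix a strict $p^n$-torsion parabolic Fontaine-Faltings module $M=(V,\nabla,\Fil,\varphi)$ over $(Y_n,D_{Y_n})/S_n$; since $M$ has finitely many parabolic weights, all rational, and by \autoref{Prop_FaltingTilde} the tilde $\widetilde{(V,\nabla,\Fil)}$ has the same weights as $V$, there is a positive integer $N$ with all these weights in $\frac 1N\mathbb Z$. In the local step one assumes $Y=\Spec R$ is small affine and chooses, as in \cite[page~15]{LYZ25}, a finite flat $\pi'\colon\mathcal Y''\to\mathcal Y$ that is \'etale over $\mathcal U_{\mathcal Y}$ and whose branch divisor has all multiplicities along $\mathcal D_{\mathcal Y}$ divisible by $Np^n$ — concretely a Kummer cover adjoining $Np^n$-th roots of the local equations of the components of $D_Y$, which is again small affine and carries a Frobenius lift extending a chosen one on $\mathcal Y$. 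By \autoref{Thm_PullBack} and \autoref{thm_ComparePullbackDeRhamBundle} the parabolic pullback $\pi'^*(V,\nabla)$ has trivial parabolic structure, and divisibility of the ramification by $p^n$ kills its log poles by the mechanism of \cite[page~15]{LYZ25}; the locally split filtration $\Fil$ is carried along, and because the parabolic Frobenius pullback functor $\mathcal F_n$ of \S\ref{sec_PFFandPHDF} is built from Frobenius lifts compatible with the divisor it commutes with $\pi'^*$, so $\varphi$ pulls back to a Frobenius structure. Hence $\pi'^*M$ is an honest pole-free Fontaine-Faltings module on $\mathcal Y''$ in the sense of \S\ref{sect:FF/small}.

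Next I would apply Faltings' $\mathbb D$-functor \cite{Fal89} to $\pi'^*M$, obtaining a finite $\mathbb Z/p^n\mathbb Z$-module $\mathbb D(\pi'^*M)$ with commuting continuous actions of $\pi_1^{\et}(\mathcal U_{\mathcal Y_K})$ and of $\pi_1^{\et}(\mathcal Y''_K)$; by \cite[Lemma~2.4]{LYZ25} these combine into a continuous action of $\pi_1^{\et}(\mathcal Y_K^\circ)$, and, just as in \cite{LYZ25}, this is independent of $\pi'$ because any two admissible covers are dominated by a third and the comparison maps on $\mathbb D$ are canonical. Restricting the action to $\pi_1^{\et}(\mathcal U_{\mathcal Y_K})$ recovers $\mathbb D(\pi'^*M|_{\mathcal U})$, which by \'etale descent (again \cite[Lemma~2.4]{LYZ25}) is $\mathbb D(M|_{Y_n-D_{Y_n}})$, giving the asserted compatibility in the small-affine case. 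For general $Y$ one covers it by small affines, gets a $\pi_1^{\et}(\mathcal U_K)$-representation on each, checks pairwise compatibility via \cite[Proposition~2.7]{LYZ25}, and glues to a continuous $\mathbb Z/p^n\mathbb Z$-representation $\mathbb D^{par}(M)$ of $\pi_1^{\et}(\mathcal Y_K^\circ)$; the local compatibility isomorphisms glue as well. Functoriality in $M$ is clear step by step, and the torsion-free case follows by setting $\mathbb D^{par}(M)\coloneqq\varprojlim_n\mathbb D^{par}(M\otimes\mathbb Z/p^n\mathbb Z)$, using $\varphi_{n-1}=\varphi_n\otimes\mathbb Z/p^{n-1}\mathbb Z$ for compatible transition maps and passing to the limit in the compatibility isomorphism.

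For part (2), suppose $Y$ is proper over $W$. Then $Y_K^\circ=(U_Y)_K$ is open in the proper $K$-variety $Y_K$, and its analytification is exactly $\mathcal Y_K^\circ=\mathcal Y_K-\mathcal D_{\mathcal Y_K}$ (analytification commutes with the open immersion $Y_K^\circ\hookrightarrow Y_K$, and $(Y_K)^{an}=\mathcal Y_K$ since $Y_K$ is proper). The $p$-adic rigid-analytic comparison of \'etale fundamental groups — the same one used to algebraize $\mathbb D$ and $\mathbb D^{\log}$ in \cite{Fal89,LYZ25} — then gives a canonical isomorphism $\pi_1^{\et}(\mathcal Y_K^\circ)\xrightarrow{\ \sim\ }\pi_1^{\et}(Y_K^\circ)$, and transporting the representation and compatibility isomorphism of part (1) along it proves (2).

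The hard part will be the first step: verifying that $\pi'^*M$ is genuinely a pole-free Fontaine-Faltings module, complete with its Frobenius isomorphism. This amounts to pushing the global description of $\mathcal F_n$ from \S\ref{sec_PFFandPHDF} through the finite flat pullback $\pi'^*$ — using \autoref{prop:PullbackParabolicBundles} to commute $\pi'^*$ past the tensor decomposition $\bigcup_\gamma(\cdots)\otimes\mathcal O_Y(p\gamma D)$ defining $\mathcal F_n$, and checking that the Frobenius lift chosen on $\mathcal Y''$ is compatible enough with those on $\mathcal Y$ for $\pi'^*\varphi$ to remain an isomorphism $\mathcal F_n\widetilde{(\pi'^*V,\pi'^*\nabla,\pi'^*\Fil)}\to(\pi'^*V,\pi'^*\nabla)$ — and then checking independence of all auxiliary choices up to canonical isomorphism. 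Once this is settled, the gluing in the global step and the rigid-analytic comparison in (2) are routine, being formally identical to the corresponding parts of \cite{LYZ25}.
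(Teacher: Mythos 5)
Your proposal follows the same route as the paper: the paper's own argument is precisely to rerun the two-step construction of $\mathbb D^{\log}$ from \cite{LYZ25}, replacing the pole-killing cover by one whose ramification along $\mathcal D_{\mathcal Y}$ is divisible by $Np^n$ so that $\pi'^*M$ becomes a pole-free Fontaine--Faltings module with trivial parabolic structure, and then invoking \cite[Lemma~2.4, Proposition~2.7]{LYZ25} for the local action of $\pi_1^{\et}(\mathcal Y_K^\circ)$, the gluing, the compatibility with $\mathbb D$, the inverse limit in the torsion-free case, and the algebraization in the proper case. Your identification of the pullback step (trivialization of the parabolic structure via \autoref{Thm_PullBack} and \autoref{thm_ComparePullbackDeRhamBundle}, and compatibility of $\mathcal F_n$ with $\pi'^*$) as the point requiring verification matches what the paper leaves implicit, so the proposal is correct and essentially identical in approach.
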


\begin{remark}
Faltings constructed the $\mathbb D$-functor for general Fontaine-Faltings modules. Here, due to technical limitations, we only construct the $\mathbb D^{par}$-functor for parabolic strict $p^n$-torsion and torsion-free Fontaine-Faltings modules, which is sufficient for applications.
\end{remark}

\section{\bf Parabolic Higgs-de Rham Flows}
\label{sec_Para_Higgs_de_Rham}

In this section, we introduce the definitions of parabolic Higgs-de Rham flows over $W_n$ or $W$. One see \cite{KrSh20} for a positive characteristic version of construction of parabolic Higgs-de Rham flows on curves. We maintain the same setup as in \autoref{sec_Para_FF}.

\subsection{Introduction to Higgs-de Rham flows}

Lan Sheng and Zuo have introduced the notion of (periodic) Higgs-de Rham flow over a smooth scheme $Y$ over $W(k)$ and have shown that the category of $f$-periodic Higgs bundles is equivalent to the category of Fontaine-Faltings modules over $Y$ endowed with a $\mathbb{Z}_{p^f}$-endomorphism structure in \cite{LSZ19}. The theory of Higgs-de Rham flows is pivotal in various research domains, including the study of the positive Bogomolov inequality \cite{Lan15} the theory of rigid connections \cite{EsGr20}, and other related areas.

Lately, Sun-Yang-Zuo introduced twisted Higgs-de Rham flow, which generalized the notion of Higgs-de Rham flow in \cite{SYZ22}. Actually, it is just the parabolic Higgs-de Rham flow with only one parabolic weight along each component of the boundary divisor.

Recall from \cite{LSZ19} that a \emph{Higgs-de Rham flow} over $Y_1$ is a sequence consisting of infinitely many alternating terms of filtered de Rham bundles and Higgs bundles:
\[\left\{
(V, \nabla, \Fil)_{-1},
(E, \theta)_{0},
(V, \nabla, \Fil)_{0},
(E, \theta)_{1},
(V, \nabla, \Fil)_{1},
\cdots\right\},\]
which are related to each other by the following diagram inductively
\begin{equation} \label{diag:HDF}
\xymatrix@W=10mm@C=3mm@R=5mm{
(V, \nabla, \Fil)_{-1} \ar[dr]^{\Gr} && (V, \nabla, \Fil)_{0}\ar[dr]^{\Gr}
&& (V, \nabla, \Fil)_{1}\ar[dr]^{\Gr}
&\\
& (E, \theta)_{0} \ar[ur]^{\mathcal{C}_1^{-1}}
&& (E, \theta)_{1} \ar[ur]^{\mathcal{C}_1^{-1}}
&& \cdots\\
}
\end{equation}
where
\begin{itemize}
\item[-] $(V, \nabla, \Fil)_{-1}$ is a filtered logarithmic de Rham bundle over $Y_1$ of level in $[0, p-2]$;
\item[-] $\mathcal C_1^{-1}$ is the inverse Cartier functor  in \cite[Theorem 2.8]{OgVo07}.
\item[-] Inductively, for $i\geq1$, $(E, \theta)_i$ is the graded logarithmic Higgs bundle $\Gr(V, \nabla, \Fil)_{i-1}$, and
\[(V, \nabla)_i:=\mathcal{C}_1^{-1} (E, \theta)_i,\]
which is endowed a Hodge filtration $\Fil_i$ of level in $[0, p-2]$.
\end{itemize}

\begin{remark}
\begin{enumerate}
\item In \cite{LSZ19}, the Higgs-de Rham flow and inverse Cartier functor are defined over any truncated level, and their definitions are considerably more complicated.
\item The essential data in a Higgs-de Rham flow are just
\[V_{-1}, \nabla_{-1}, \Fil_{-1}, \Fil_{0}, \Fil_{1}, \Fil_{2}, \cdots\]
since the other terms can be constructed from these, e.g., $E_0$, $\theta_0$, $V_1$, $\nabla_1$, $\cdots$.
\end{enumerate}
\end{remark}

A Higgs-de Rham flow over $Y_1$ is called $f$-periodic if there exists an isomorphism
\[\Phi\colon (E_{f}, \theta_{f}, V_{f-1}, \nabla_{f-1}, \Fil_{f-1}, \mathrm{id}) \cong (E_0, \theta_0, V_{-1}, \nabla_{-1}, \Fil_{-1}, \mathrm{id})\]
such that its induces isomorphisms, for all $i\geq0$,
\[(E, \theta)_{f+i}\cong (E, \theta)_{i} \quad\text{and}\quad (V, \nabla, \Fil)_{f+i}\cong (V, \nabla, \Fil)_{i}.\]
We simply represent this periodic Higgs-de Rham flow over $Y_1$ with the following diagram
\begin{equation} \label{diag:PHDF}
\xymatrix@W=10mm@C=3mm@R=10mm{
& (V, \nabla, \Fil)_{0} \ar[dr]^{\Gr}
&& (V, \nabla, \Fil)_{1} \ar[dr]^{\Gr}
&& (V, \nabla, \Fil)_{f-1} \ar[dr]^{\Gr}
&\\
(E, \theta)_{0} \ar[ur]^{\mathcal{C}_1^{-1}}
&& (E, \theta)_{1} \ar[ur]^{\mathcal{C}_1^{-1}}
&& \cdots \ar[ur]^{\mathcal{C}_1^{-1}}
&& (E, \theta)_{f} \ar@/^30pt/[llllll]^{\Phi}\\}
\end{equation}
\begin{theorem}[Lan-Sheng-Zuo \cite{LSZ19}]\label{equiv:FF&HDF}
There exists an equivalence between the category of strict $p^n$-torsion logarithmic Fontaine-Faltings modules with $W_n(\mathbb F_{p^f})$-endomorphism structure over $Y_n$ and the category of $f$-periodic  logarithmic Higgs-de Rham flows over $Y_n$.
\end{theorem}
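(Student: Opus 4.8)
The plan is to exhibit a pair of quasi-inverse functors realizing the standard Lan--Sheng--Zuo dictionary: the $W_n(\mathbb F_{p^f})$-endomorphism structure on a strict $p^n$-torsion logarithmic Fontaine--Faltings module should cut it into the $f$ filtered de Rham bundles occupying one period of a flow, and the periodicity isomorphism of the flow should glue those $f$ pieces back into the single Frobenius isomorphism $\varphi$. Throughout I would use that the local Frobenius lifts $\Phi_{n,\mu}$ lift the \emph{absolute} Frobenius, so that $\mathcal F_n\circ\widetilde{(\cdot)}$ carries scalar multiplication by $c\in W_n(k)$ to scalar multiplication by $\sigma(c)$, where $\sigma$ denotes the Witt--Frobenius; this single fact is what matches the cyclic structure of a flow to the $W_n(\mathbb F_{p^f})$-action.

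In the direction from Fontaine--Faltings modules to flows, I would start with $(V,\nabla,\Fil,\varphi)$ together with $\iota\colon W_n(\mathbb F_{p^f})\hookrightarrow\End(M)$. Since $k\supseteq\mathbb F_{p^f}$ one has $W_n(\mathbb F_{p^f})\otimes_{\mathbb Z/p^n\mathbb Z}W_n(k)\cong\prod_{j\in\mathbb Z/f}W_n(k)$, the $j$-th factor coming from the embedding $\tau_j=\sigma^{j}\circ\tau_0$; decomposing $V$ along the corresponding idempotents gives $V=\bigoplus_{j\in\mathbb Z/f}V^{(j)}$, a splitting respected by $\nabla$ and $\Fil$ because $\iota(a)$ is $\mathcal O$-linear, horizontal, and strict for $\Fil$. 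Set $(V,\nabla,\Fil)_j:=(V^{(j)},\nabla,\Fil)$. Since $\iota(a)$ acts on $V^{(j)}$ as the scalar $\tau_j(a)$, the semilinearity above shows it acts on $\mathcal F_n\widetilde{(V,\nabla,\Fil)_j}$ as the scalar $\sigma(\tau_j(a))=\tau_{j+1}(a)$, so $\iota$-equivariance of $\varphi$ forces $\varphi=\bigoplus_j\varphi_j$ with $\varphi_j\colon\mathcal F_n\widetilde{(V,\nabla,\Fil)_j}\xrightarrow{\sim}(V,\nabla)_{j+1}$; identifying $\mathcal F_n\widetilde{(V,\nabla,\Fil)_j}$ with $\mathcal C_n^{-1}\Gr(V,\nabla,\Fil)_j$, the data $\{(V,\nabla,\Fil)_j\}_{j\in\mathbb Z/f}$ and the $\varphi_j$ assemble into an $f$-periodic logarithmic Higgs--de Rham flow whose periodicity isomorphism is the wrap-around $\varphi_{f-1}$. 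Conversely, from an $f$-periodic flow with period isomorphism $\Phi$ I would let $V=\bigoplus_i V_i$ be the direct sum of the underlying de Rham bundles of the $f$ filtered stages occurring in one period (stages $-1$ through $f-2$, using $\Phi$ to identify stage $f-1$ with stage $-1$), equipped with block-diagonal $\nabla$ and $\Fil$; I define $\varphi$ on each block to be the canonical identity $\mathcal C_n^{-1}\Gr(V,\nabla,\Fil)_i=(V,\nabla)_{i+1}$, composed with $\Phi$ on the single wrap-around block, which is an isomorphism since $\Phi$ is; and I define $\iota$ to act on the $i$-th block as the scalar $\tau_i(a)\in W_n(k)$, which commutes with $\varphi$ by the same semilinearity and manifestly commutes with $\nabla$ and is strict for $\Fil$.

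Checking functoriality of both constructions and that they are mutually quasi-inverse is then bookkeeping: running the period around the flow built from a Fontaine--Faltings module reproduces $\varphi$, and decomposing a flow's reassembled $\varphi$ along its $\iota$-eigenspaces reproduces the flow. I expect the real work to lie in the case $n>1$: there $\mathcal C_n^{-1}=\mathcal F_n\circ\widetilde{(\cdot)}$ is Faltings' Frobenius pullback glued from local lifts via the Taylor-expansion comparison isomorphisms $\alpha_{\Phi,\Psi}$, and one must check that these gluings are compatible along the flow and that strong $p$-divisibility --- $\varphi$ being an isomorphism, not merely a morphism --- propagates stage by stage, which is where the Hodge-length bound $b-a\le p-2$ is used. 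The logarithmic refinement additionally requires that each $\Phi_{n,\mu}$ preserves the boundary divisor, i.e.\ $\Phi_{n,\mu}^*D_{Y_n}=pD_{Y_n}$, so that $\mathcal F_n$ is compatible with the canonical extension of log connections of \autoref{thm_NatExtLogConn}; granting this and the $n=1$ case from \cite{LSZ19,OgVo07}, the dictionary above carries over.
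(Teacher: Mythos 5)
Your proposal is correct and takes essentially the same route as the paper's argument (given there for the parabolic generalization, \autoref{thm_equFunctorHdRF&FFMod}, the non-parabolic statement being quoted from \cite{LSZ19}): decompose the module along the eigenspaces of the $W_n(\mathbb F_{p^f})$-action using the $\sigma$-semilinearity of $\mathcal F_n\circ\widetilde{(\cdot)}$ to split $\varphi$ into the stage-to-stage isomorphisms of a flow, and conversely take the direct sum of one period with $\iota(\xi)$ acting by $\sigma^i(\xi)$ on the $i$-th block and the wrap-around periodicity isomorphism supplying $\varphi$. The only cosmetic difference is that the paper routes both directions through the $1$-periodic case as a separate lemma, which your identification of $\mathcal F_n\widetilde{(\cdot)}$ with $\mathcal C_n^{-1}\circ\Gr$ performs implicitly.
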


\subsection{Parabolic Higgs-de Rham flows}
To define parabolic Higgs-de Rham flows, we need the parabolic versions of the categories and functors appearing in a diagram from \cite[Section 1.2.1]{SYZ22}. The most crucial part is to define the parabolic inverse Cartier functor.

\begin{definition}
    \begin{itemize}
        \item Denote by \emph{$\mathcal{H}_{p-2}^{par}(Y_n)$} the category of tuples $(E, \theta, \overline{V}, \overline{\nabla}, \overline{\Fil}, {\psi})$, each consisting of
\begin{itemize}
\item a graded parabolic Higgs bundle $(E, \theta)$ over $(Y_n, D_{Y_n})/S_n$ with exponent $\leq p-2$,
\item a filtered parabolic de Rham bundle over $(Y_n, D_{Y_{n-1}})/S_{n-1}$, and
\item an isomorphism of graded parabolic Higgs bundles over $(Y_{n-1}, D_{Y_{n-1}})/S_{n-1}$
\[{\psi}\colon \Gr(\overline{V}, \overline{\nabla}, \overline{\Fil}) \cong (E, \theta)\otimes \mathbb Z/p^{n-1}\mathbb Z.\]
\end{itemize}
\item Denote by \emph{$\mathcal{H}_{p-2}^{ni,par}(Y_n)$} the category whose objects satisfy the same conditions as those of  $\mathcal{H}_{p-2}^{par}(Y_n)$, except that the integrability condition is not imposed.
    \end{itemize}

\end{definition}

Following Lan-Sheng-Zuo's method, we construct the parabolic inverse functor as shown in the diagram:
\begin{equation}\label{diag:C^{-1}}
\xymatrix{
& \mathcal{H}^{ni,par}_{p-2}(Y_n)
&\mathcal{H}^{par}_{p-2}(Y_n)\ar[dr]^-{\mathcal C_n^{-1}}  \ar@{^(->}[l]\ar[dr]^-{\mathcal C_n^{-1}} \ar[dd]^{\mathcal{T}_n}&\\
\MCF^{par}_{p-2}(Y_n)\ar[dr]_{\widetilde{(\cdot)}}\ar[ur]^{\overline{\Gr}}
&&& \MIC^{par}(Y_n)\\
&\widetilde{\MC}^{par}(Y_n) &\widetilde{\MIC}^{par}(Y_n) \ar@{^(->}[l] \ar[ur]_-{\mathcal F_n}&\\
}
\end{equation}

\subsubsection{Functor $\overline{\Gr}$} Just as in the non-parabolic case, for an object $(V, \nabla, \Fil)$ in $\MCF^{par}_{p-2}(Y_n)$, the functor $\overline{\Gr}$ is given by
\[\overline{\Gr}(V, \nabla, \Fil)=(E, \theta, \overline{V}, \overline{\nabla}, \overline{\Fil}, {\psi}),\]
where $(E, \theta)=\Gr(V, \nabla, \Fil)$ is the graded parabolic Higgs bundle, $(\overline{V}, \overline{\nabla}, \overline{\Fil})$ is the modulo $p^{n-1}$-reduction of $(V, \nabla, \Fil)$, and $\psi$ is the identifying map
\[\id\colon \Gr(\overline{V}, \overline{\nabla}, \overline{\Fil})= (E, \theta)\otimes\mathbb Z/p^{n-1}\mathbb Z.\]

\subsubsection{The construction of functor $\mathcal{T}_n$}
We first recall the construction of $\mathcal{T}_n$ in non-parabolic case in \cite{LSZ19}. Denote by
\emph{$\widetilde{\MIC}(Y_n)$} (reps. \emph{$\MCF_{p-2}(Y_n)$}, \emph{$\mathcal H_{p-2}^{ni}(Y_n)$}, \emph{$\mathcal H_{p-2}(Y_n)$}) the full subcategory of $\widetilde{\MIC}^{par}(Y_n)$
(reps. $\MCF^{par}_{p-2}(Y_n)$, $\mathcal H^{ni,par}_{p-2}(Y_n)$, $\mathcal H_{p-2}^{par}(Y_n)$) consisting of objects with trivial parabolic structure.

Let $(E, \theta, \bar{V}, \bar{\nabla}, \overline{\Fil}, \psi)\in \mathcal{H}_{p-2}(Y_n).$
 By \cite[Lemma 4.6]{LSZ19}, the grading  functor
\[\overline{\Gr}:\MCF_{p-2}(Y_n)\to\mathcal H^{ni}_{p-2}(Y_n)\] locally essentially surjective. Locally on $Y_n$, choose an object  $(V,\nabla,\Fil)$ in $\MCF_{p-2}(Y_n)$ such that
\[\overline{\Gr}(V,\nabla,\Fil)\cong (E, \theta, \bar{V}, \bar{\nabla}, \overline{\Fil}, \psi).\]
Since $(E, \theta, \bar{V}, \bar{\nabla}, \overline{\Fil}, \psi)$ is integrable, $\widetilde{(V,\nabla,\Fil)}$ remains integrable even if $(V,\nabla,\Fil)$ is not necessarily so (\cite[Lemma 4.7]{LSZ19}), i.e., $\widetilde{(V,\nabla,\Fil)}$ is a locally parabolic $p$-flat bundle on $Y_n$.
By \cite[Lemma 4.10]{LSZ19}, the locally parabolic $p$-flat bundles glue to be a global object in $\widetilde{\MIC}(Y_n)$, which is still denoted by $\mathcal T_n(E, \theta, \bar{V}, \bar{\nabla}, \overline{\Fil}, \psi)$. Specifically, for any $(V,\nabla,\Fil)$ in $\MIC_{p-2}(Y_n)$, one has a canonical isomorphism
\[\mathcal T_n\circ\overline{\Gr} (V,\nabla,\Fil)\cong \widetilde{(V,\nabla,\Fil)}.\]

As a generalization of Lan-Sheng-Zuo's construction, we construct the functor $\mathcal{T}_n$ in the parabolic setting. Let $(E, \theta, \bar{V}, \bar{\nabla}, \overline{\Fil}, \psi)$ be an object in $\mathcal{H}^{par}_{p-2}(Y_n)$. For any $\alpha\in\mathbb{Q}^n$, denote
\[(\widetilde{V}_\alpha, \widetilde{\nabla}_\alpha)\coloneqq\mathcal{T}_n(E_\alpha, \theta_\alpha, \bar{V}_\alpha, \bar{\nabla}_\alpha, \overline{\Fil}_\alpha, \psi_\alpha),\]
By the following lemma, $\{(\widetilde{V}_\alpha, \widetilde{\nabla}_\alpha)\}_{\alpha\in \mathbb Q^n}$ forms a parabolic $p$-flat bundle. Then we can define $\mathcal{T}_n$ in the parabolic case by setting
\[\mathcal{T}_n(E, \theta, \bar{V}, \bar{\nabla}, \overline{\Fil}, \psi)\coloneqq \{(\widetilde{V}_\alpha, \widetilde{\nabla}_\alpha)\}_{\alpha\in \mathbb Q^n}.\]
$\mathcal T_n$ in non-parabolic case is regarded as a special case of our construction.

\begin{lemma}
The collection $\{(\widetilde{V}_\alpha, \widetilde{\nabla}_\alpha)\}_{\alpha\in \mathbb Q^n}$ forms a parabolic $p$-flat bundle.
We denote it by
\[\mathcal{T}_n(E, \theta, \bar{V}, \bar{\nabla}, \overline{\Fil}, \psi)\coloneqq\{(\widetilde{V}_\alpha, \widetilde{\nabla}_\alpha)\}_{\alpha\in \mathbb Q^n}.\]
\end{lemma}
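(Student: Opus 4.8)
The claim is that the local data $(\widetilde V_\alpha,\widetilde\nabla_\alpha)$, each obtained by applying the (trivial-parabolic-structure) functor $\mathcal T_n$ to the $\alpha$-component $(E_\alpha,\theta_\alpha,\bar V_\alpha,\bar\nabla_\alpha,\overline\Fil_\alpha,\psi_\alpha)$, assemble into a parabolic $p$-flat bundle. The plan is to verify the three defining properties of a parabolic sheaf (Normalization, Semicontinuity, Rationality), plus the requirement that the $\widetilde V_\alpha$ are $h$-torsion-free with $S_n$-flat cokernels, plus compatibility of the $p$-connections $\widetilde\nabla_\alpha$ with the inclusion maps. The key structural fact I would exploit is that $\mathcal T_n$ (in the non-parabolic case) is \emph{functorial} and, by the canonical isomorphism $\mathcal T_n\circ\overline\Gr(V,\nabla,\Fil)\cong\widetilde{(V,\nabla,\Fil)}$ recalled just above, agrees with Faltings' tilde functor $\widetilde{(\cdot)}$ on objects in the image of $\overline\Gr$. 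Since the question is local on $Y_n$ and $\overline\Gr$ is locally essentially surjective onto $\mathcal H^{ni}_{p-2}(Y_n)$, I can locally lift $(E,\theta,\bar V,\bar\nabla,\overline\Fil,\psi)$ to an object $(V,\nabla,\Fil)$ of $\MCF^{par}_{p-2}(Y_n)$ whose $\overline\Gr$ recovers the given tuple, and then I am reduced to studying $\widetilde{(V,\nabla,\Fil)}$, which has already been handled in \autoref{Prop_FaltingTilde}.

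Concretely, I would proceed as follows. First, reduce to the local situation: on a small affine $U_n\subseteq Y_n$ pick $(V,\nabla,\Fil)\in\MCF^{par}_{p-2}(U_n)$ with $\overline\Gr(V,\nabla,\Fil)$ isomorphic to the restriction of $(E,\theta,\bar V,\bar\nabla,\overline\Fil,\psi)$; this is possible by the parabolic analogue of \cite[Lemma 4.6]{LSZ19} applied componentwise, noting the parabolic structure is encoded on finitely many indices $\alpha$ with $0\le\alpha_i<1$. Then, for each $\alpha$, the non-parabolic canonical isomorphism $\mathcal T_n\circ\overline\Gr\cong\widetilde{(\cdot)}$ gives $(\widetilde V_\alpha,\widetilde\nabla_\alpha)\cong\widetilde{(V_\alpha,\nabla_\alpha,\Fil_\alpha)}$ compatibly in $\alpha$ with the transition maps coming from $V_\alpha\hookrightarrow V_\beta$. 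Therefore $\{(\widetilde V_\alpha,\widetilde\nabla_\alpha)\}_\alpha$ is locally isomorphic to $\widetilde{(V,\nabla,\Fil)}$, which by \autoref{Prop_FaltingTilde} is an object of $\widetilde{\MC}^{par}(U_n)$ — in fact $\widetilde V$ is a parabolic \emph{bundle} with the same parabolic weights as $V$, and $\widetilde\nabla$ is a parabolic $p$-connection; integrability of $\widetilde\nabla$ follows from the integrability hypothesis built into $\mathcal H^{par}_{p-2}$ together with \cite[Lemma 4.7]{LSZ19} applied to each $\alpha$. Finally, the independence of these local descriptions of the choice of lift $(V,\nabla,\Fil)$ is exactly the content of the gluing argument \cite[Lemma 4.10]{LSZ19} applied componentwise, so the local parabolic $p$-flat bundles glue to a global one; this is what licenses writing $\mathcal T_n(E,\theta,\bar V,\bar\nabla,\overline\Fil,\psi):=\{(\widetilde V_\alpha,\widetilde\nabla_\alpha)\}_\alpha$.

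The step I expect to require the most care is checking that the \emph{transition maps} $\widetilde V_\alpha\hookrightarrow\widetilde V_\beta$ for $\alpha\le\beta$ are genuine inclusions of $h$-torsion-free sheaves with $S_n$-flat cokernel, compatibly with the $p$-connections, rather than just abstract morphisms. The subtlety is that $\mathcal T_n$ is built by a local-to-global gluing rather than by an explicit formula, so functoriality in the parabolic index must be tracked through that gluing; the resolution is again to pass to a local lift $(V,\nabla,\Fil)$ and observe that on the inclusions $V_\alpha\hookrightarrow V_\beta$ (which are parabolic-sheaf inclusions, hence have $S_n$-flat cokernel supported on $D_{Y_n}$) Faltings' tilde functor $\widetilde{(\cdot)}$ manifestly produces inclusions with the same properties — this was already implicitly used in the proof of \autoref{Prop_FaltingTilde}, where one reduces to parabolic line bundles $\mathcal O_{Y_n}(\gamma D_{Y_n})$ and computes $\widetilde V_\alpha\cong\mathcal O_{Y_n}(\lfloor\alpha+\gamma\rfloor D_{Y_n})$. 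The remaining verifications (Normalization: $\widetilde V_{\alpha+\delta^i}=\widetilde V_\alpha(D_{Y_n,i})$; Semicontinuity and Rationality) then follow from the corresponding properties of $V$ because $\widetilde{(\cdot)}$ preserves parabolic weights, so I would dispatch them briefly by reduction to the line-bundle case.
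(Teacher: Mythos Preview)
Your proposal is correct and follows essentially the same approach as the paper: localize, lift the given object of $\mathcal H^{par}_{p-2}(Y_n)$ to an object $(V,\nabla,\Fil)$ of $\MCF^{par}_{p-2}(Y_n)$, and then invoke \autoref{Prop_FaltingTilde}. The paper is slightly more concrete about the lift than your ``componentwise'' appeal to \cite[Lemma~4.6]{LSZ19}: it sets $V\coloneqq E$ with $\Fil^k\coloneqq\bigoplus_{i\ge k}E^i$, chooses a connection on $V_0$ via \cite[Lemma~4.6]{LSZ19}, and then extends it to a \emph{parabolic} connection on all of $V$ using the uniqueness statement \autoref{thm_UnqiuePara}, which cleanly bypasses the transition-map compatibility issue you correctly flag in your final paragraph.
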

\begin{proof}
   By the functoriality of $\mathcal T_n$, it suffices to verify that $\{\widetilde V_{\alpha}\}_{\alpha\in \mathbb Q^n}$ is a parabolic bundle.
     Since the problem is local on $Y_n$, we can assume that $\overline{\Fil}^k$ ($k=0,1,\ldots, p-2$) is isomorphic direct sum of parabolic line bundles and splitting.
     Let $E=\oplus_{i=0}^{p-2} E^i$  be the grading structure of $E$. Denote by
     \[V\coloneqq E, \quad \Fil^k\coloneqq \oplus_{i=k}^{p-2} E^i.\]
     Then $\overline{V}=V\mod p^{n-1}$ and $\overline{\Fil}^k=\Fil^k\mod p^{n-1}$. Using the same method as in \cite[Lemma 4.6]{LSZ19}, one can choose a connection $\nabla_0$ on $V_0$ satisfying the Griffiths transversality with respect to $\Fil_0$ and
     \[\overline{\Gr}(V_0,\nabla_0,\Fil_0)=(E_0,\theta_0,\overline{V}_0,\overline{\nabla}_0,\overline{\Fil}_0,\psi_0).\]
     By \autoref{thm_UnqiuePara}, $\nabla_0$ extends to a unique parabolic connection $\nabla$ on $V$ satisfying the Griffiths transversality with respect to $\Fil$ and
     \[\overline{\Gr}(V,\nabla,\Fil)=(E,\theta,\overline{V},\overline{\nabla},\overline{\Fil},\psi).\]
     $(V,\nabla,\Fil)$ is an object in $\MCF^{par}_{p-2}(Y_n)$. By \autoref{Prop_FaltingTilde},
     $\{\widetilde V_{\alpha}\}_{\alpha\in\mathbb Q^n}=\widetilde{(V,\Fil)}$ is a parabolic vector bundle.
\end{proof}

The composition functor $\mathcal C_n^{-1}:=\mathcal F_n\circ \mathcal{T}_n: \mathcal H^{par}_{p-2}(Y_n)\longrightarrow \MIC^{par}(Y_n)$ is called \emph{the parabolic inverse Cartier functor of $n$-th truncated level}. By the construction of $\mathcal C_n^{-1}$, the restriction of $\mathcal C_n^{-1}$ on the category $\mathcal H_{p-2}(Y_n)$ coincides with the usual inverse Cartier functor  in \cite[section 4]{LSZ19}.

\subsubsection{Parabolic Higgs-de Rham flows and parabolic Fontaine-Faltings modules}

\begin{definition}
A \emph{parabolic Higgs-de Rham flow} over $(Y_n, D_{Y_n}) \subset (Y_{n+1}, D_{Y_{n+1}})$ is a sequence consisting of infinitely many alternating terms of filtered parabolic de Rham bundles and graded parabolic Higgs bundles
\[\left\{
(V, \nabla, \Fil)^{(n-1)}_{-1},
(E, \theta)_{0}^{(n)},
(V, \nabla, \Fil)_{0}^{(n)},
(E, \theta)_{1}^{(n)},
(V, \nabla, \Fil)_{1}^{(n)},
\cdots\right\},\]
which are related to each other by the following diagram inductively
\begin{equation*}
\xymatrix@W=5mm@C=-3mm@R=5mm{
&& (V, \nabla, \Fil)_{0}^{(n)} \ar[dr]^{\Gr}
&& (V, \nabla, \Fil)_{1}^{(n)} \ar[dr]^{\Gr}
&& (V, \nabla, \Fil)_{2}^{(n)} \ar[dr]^{\Gr}
&\\
& (E, \theta)_{0}^{(n)} \ar[ur]^{\mathcal{C}^{-1}_n} \ar@{..>}[dd]
&& (E, \theta)_{1}^{(n)} \ar[ur]^{\mathcal{C}^{-1}_n}
&& (E, \theta)_{2}^{(n)} \ar[ur]^{\mathcal{C}^{-1}_n}
&& \cdots\\
(V, \nabla, \Fil)^{(n-1)}_{-1} \ar[dr]^{\Gr}
&&&&&&&\\
&\Gr\left((V, \nabla, \Fil)^{(n-1)}_{-1}\right)
&&&&\\
}
\end{equation*}
where
\begin{itemize}
\item[-] $(V, \nabla, \Fil)^{(n-1)}_{-1}\in \MCF^{par}_{p-2}(Y_n)$;
\item[-] $(E, \theta)_0^{(n)}$ is a lifting of the graded parabolic Higgs bundle $\Gr\left((V, \nabla, \Fil)^{(n-1)}_{-1}\right)$ over $(Y_n, D_{Y_n})/S_n$, $(V, \nabla)_0^{(n)}\coloneqq \mathcal C^{-1}_n ((E, \theta)_0^{(n)}, (V, \nabla, \Fil)^{(n-1)}_{-1} , \psi)$, and $\Fil^{(n)}_0$ is a parabolic Hodge filtration on $(V, \nabla)_0^{(n)}$ of level in $[0, p-2]$;
\item[-] Inductively, for $m\geq1$, $(E, \theta)_m^{(n)}\coloneqq\Gr\left((V, \nabla, \Fil)_{m-1}^{(n)}\right)$ and
\[(V, \nabla)_m^{(n)}\coloneqq \mathcal C^{-1}_n \left(
(E, \theta)_{m}^{(n)},
(V, \nabla, \Fil)^{(n-1)}_{m-1},
\id
\right).\]
Here, $(V, \nabla, \Fil)^{(n-1)}_{m-1}$ is the reduction of $(V, \nabla, \Fil)^{(n)}_{m-1}$ on $X_{n-1}$. And $\Fil^{(n)}_m$ is a Hodge filtration on $(V, \nabla)_m^{(n)}$.
\end{itemize}
\end{definition}

\begin{definition}
Let
\[\Flow = \left\{
(V, \nabla, \Fil)^{(n-1)}_{-1},
(E, \theta)_{0}^{(n)},
(V, \nabla, \Fil)_{0}^{(n)},
(E, \theta)_{1}^{(n)},
(V, \nabla, \Fil)_{1}^{(n)},
\cdots\right\},\]
and
\[\Flow'=\left\{
(V', \nabla', \Fil')^{(n-1)}_{-1},
(E', \theta')_{0}^{(n)},
(V', \nabla', \Fil')_{0}^{(n)},
(E', \theta')_{1}^{(n)},
(V', \nabla', \Fil')_{1}^{(n)},
\cdots\right\},\]
be two Higgs-de Rham flows over $(Y_n, D_{Y_n})\subset (Y_{n+1}, D_{Y_{n+1}})$. A morphism from $\Flow$ to $\Flow'$ is a compatible system of morphisms
\[\{\varphi_{-1}^{(n-1)}, \psi_{0}^{(n)}, \varphi_{0}^{(n)}, \psi_{1}^{(n)}, \varphi_{1}^{(n)}, \cdots\}\]
between the terms respectively in the following sense:
\begin{itemize}
\item $\Gr(\varphi_{-1}^{(n-1)})=\psi_{0}^{(n)}\pmod{p^{n-1}}$;
\item $\mathcal C_n^{-1}(\psi_m^{(n)}, \varphi_{m-1}^{(n-1)}) = \varphi_m^{(n)}$ (if $m\geq1$, then here $\varphi_m^{(n-1)}:=\varphi_m^{(n)}\pmod{p^{n-1}}$); and
\item $\Gr(\varphi_m^{(n)})=\varphi_{m+1}^{(n)}$.
\end{itemize}
\end{definition}

\begin{remark}
The morphism is uniquely determined by the first two terms $\varphi_{-1}^{(n-1)}, \psi_{0}^{(n)}$. If $n=1$, then the first morphism is vacuous.
\end{remark}

\begin{definition}
Let
\[\Flow = \left\{
(V, \nabla, \Fil)^{(n-1)}_{-1},
(E, \theta)_{0}^{(n)},
(V, \nabla, \Fil)_{0}^{(n)},
(E, \theta)_{1}^{(n)},
(V, \nabla, \Fil)_{1}^{(n)},
\cdots\right\},\]
be a Higgs-de Rham flow. We call the flow
\[\Flow[f] = \left\{
(V, \nabla, \Fil)^{(n-1)}_{f-1},
(E, \theta)_{f}^{(n)},
(V, \nabla, \Fil)_{f}^{(n)},
(E, \theta)_{f+1}^{(n)},
(V, \nabla, \Fil)_{f+1}^{(n)},
\cdots\right\}\]
the \emph{$f$-th shifting of $\Flow$}, where $(V, \nabla, \Fil)^{(n-1)}_{f-1}:=(V, \nabla, \Fil)^{(n)}_{f-1}\pmod{p^{n-1}}$.
\end{definition}

\begin{definition}
If there is an isomorphism
\[\psi:=\{\varphi_{-1}^{(n-1)}, \psi_{0}^{(n)}, \varphi_{0}^{(n)}, \psi_{1}^{(n)}, \varphi_{1}^{(n)}, \cdots\}\]
from $\Flow[f]$ to $\Flow$, then we call the pair $(\Flow, \psi)$ a \emph{periodic Higgs-de Rham flow}. Note that $\psi$ is part of the data in the periodic Higgs-de Rham flow, which we will call a periodic mapping of $\Flow$.
Since $\psi$ is uniquely determined by $\varphi_{-1}^{(n-1)}, \psi_{0}^{(n)}$, we sometimes use $(\Flow, (\varphi_{-1}^{(n-1)}, \psi_{0}^{(n)}))$ to represent $(\Flow, \psi)$. In the case $n=1$, the first term in the flow is vacuous, and we also use $(\Flow, \psi_{0}^{(1)})$ to represent $(\Flow, \psi)$.
\end{definition}

We generalize \cite[Theorem 5.3]{LSZ19} to the parabolic version. We first prove the 1-periodic case.

\begin{lemma}\label{lem_1-HiggsCorrespondence}
    There is an equivalence of categories from the category of $1$-periodic parabolic Higgs-de Rham flows over $(Y_n,D_{Y_n})/S_n$ to the category $\MF_{[0, p-2]}^\nabla((Y_n, D_{Y_n})/S_n)$.
\end{lemma}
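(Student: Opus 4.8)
The plan is to follow the $1$-periodic case of \cite[Theorem 5.3]{LSZ19}, replacing each functor there by the parabolic counterpart constructed above. The single fact that makes the whole argument run is the canonical identification of functors
\[\mathcal C_n^{-1}\circ\overline{\Gr}\ \cong\ \mathcal F_n\circ\widetilde{(\cdot)}\colon\ \MCF^{par}_{p-2}(Y_n)\longrightarrow \MIC^{par}(Y_n),\]
which is immediate from $\mathcal C_n^{-1}=\mathcal F_n\circ\mathcal T_n$ together with the canonical isomorphism $\mathcal T_n\circ\overline{\Gr}(V,\nabla,\Fil)\cong\widetilde{(V,\nabla,\Fil)}$ built into the construction of $\mathcal T_n$. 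Granting it, the slogan is that a $1$-periodic parabolic Higgs--de Rham flow over $(Y_n,D_{Y_n})$ carries precisely the data of a filtered parabolic de Rham bundle $(V,\nabla,\Fil)$ of level in $[0,p-2]$ together with an isomorphism of parabolic de Rham bundles $\varphi\colon\mathcal F_n\widetilde{(V,\nabla,\Fil)}\xrightarrow{\ \sim\ }(V,\nabla)$ --- that is, an object of $\MF_{[0,p-2]}^\nabla((Y_n,D_{Y_n})/S_n)$ --- and the proof is just a matter of making this precise in both directions and checking morphisms.

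First I would define the functor $\mathbb A$ from $1$-periodic parabolic Higgs--de Rham flows to $\MF_{[0,p-2]}^\nabla((Y_n,D_{Y_n})/S_n)$. Given $(\Flow,\psi)$ with period mapping $\psi=\{\varphi_{-1}^{(n-1)},\psi_0^{(n)},\varphi_0^{(n)},\dots\}$, set $(V,\nabla,\Fil):=(V,\nabla,\Fil)_0^{(n)}$. Then $(E,\theta)_1^{(n)}=\Gr(V,\nabla,\Fil)$ and, by the identification above, $(V,\nabla)_1^{(n)}=\mathcal C_n^{-1}\big(\overline{\Gr}(V,\nabla,\Fil)\big)=\mathcal F_n\widetilde{(V,\nabla,\Fil)}$, so the period component $\varphi_0^{(n)}\colon(V,\nabla)_1^{(n)}\to(V,\nabla)_0^{(n)}$ is exactly an isomorphism of parabolic de Rham bundles $\mathcal F_n\widetilde{(V,\nabla,\Fil)}\xrightarrow{\ \sim\ }(V,\nabla)$; I set $\mathbb A(\Flow,\psi):=(V,\nabla,\Fil,\varphi_0^{(n)})$. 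On morphisms, $\mathbb A$ sends a morphism of flows to its $\varphi_0^{(n)}$-component, which is strict for the Hodge filtrations and commutes with the $\varphi$-structures by the three compatibility conditions in the definition of a morphism of flows.

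Next I would produce a quasi-inverse $\mathbb B$. Given $(V,\nabla,\Fil,\varphi)\in\MF_{[0,p-2]}^\nabla((Y_n,D_{Y_n})/S_n)$, unroll a flow: take $(V,\nabla,\Fil)_{-1}^{(n-1)}$ to be the reduction of $(V,\nabla,\Fil)$ modulo $p^{n-1}$, let $(E,\theta)_0^{(n)}:=\Gr(V,\nabla,\Fil)$ (a lift of the graded Higgs bundle of that reduction, the gluing identification being the identity), put $(V,\nabla)_0^{(n)}:=\mathcal C_n^{-1}\big((E,\theta)_0^{(n)},(V,\nabla,\Fil)_{-1}^{(n-1)},\id\big)=\mathcal F_n\widetilde{(V,\nabla,\Fil)}$, and take $\Fil_0^{(n)}:=\varphi^{-1}(\Fil)$, so that $\varphi$ becomes a filtered isomorphism $(V,\nabla,\Fil)_0^{(n)}\xrightarrow{\ \sim\ }(V,\nabla,\Fil)$; then continue inductively by $(E,\theta)_{m+1}^{(n)}:=\Gr\big((V,\nabla,\Fil)_m^{(n)}\big)$ and $(V,\nabla)_{m+1}^{(n)}:=\mathcal C_n^{-1}(\cdots)$, with $\Fil_{m+1}^{(n)}$ pulled back from $\Fil_m^{(n)}$ along the isomorphism induced by $\varphi$. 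Setting $\varphi_0^{(n)}:=\mathcal F_n\widetilde{(\varphi)}$, $\psi_0^{(n)}:=\Gr(\varphi)$, $\varphi_{-1}^{(n-1)}:=\varphi\bmod p^{n-1}$ and propagating the rest by functoriality of $\mathcal C_n^{-1}$ and $\Gr$, I would check that the resulting collection $\psi$ satisfies the three compatibilities and is an isomorphism $\Flow[1]\to\Flow$, so that $(\Flow,\psi)$ is a genuine $1$-periodic parabolic Higgs--de Rham flow; I set $\mathbb B(V,\nabla,\Fil,\varphi):=(\Flow,\psi)$. Functoriality of $\mathbb B$ is automatic, since a morphism of flows is determined by its first two terms.

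It then remains to check $\mathbb A\circ\mathbb B\cong\id$, which is immediate from the construction with the natural isomorphism given objectwise by $\varphi$, and $\mathbb B\circ\mathbb A\cong\id$, for which the point is that $1$-periodicity forces every term of a flow to be reconstructed from $\big((V,\nabla,\Fil)_0^{(n)},\varphi_0^{(n)}\big)$: for $m\ge 0$ the de Rham term is $(V,\nabla)_m^{(n)}=\mathcal F_n\widetilde{(V,\nabla,\Fil)_{m-1}^{(n)}}$ with $\Fil_m^{(n)}$ forced by the period isomorphism, while the remaining terms $(V,\nabla,\Fil)_{-1}^{(n-1)}$ and $(E,\theta)_0^{(n)}$ are canonically identified, via $\varphi_{-1}^{(n-1)}$ and $\psi_0^{(n)}$, with the reduction of $(V,\nabla,\Fil)_0^{(n)}$ and with $\Gr(V,\nabla,\Fil)_0^{(n)}$, and these identifications assemble into an isomorphism of flows, natural in the flow. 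I expect the genuine obstacle to be purely organizational bookkeeping: keeping track of the modulo $p^{n-1}$ reductions and of the internal gluing data $\psi_m^{(n)}$ of the objects of $\mathcal H^{par}_{p-2}(Y_n)$, and matching the three conditions defining a morphism of flows with ``strict for $\Fil$ and commuting with $\varphi$''. The parabolic nature of the problem should add nothing new, since the preservation of the locally abelian parabolic structure (and of levels in $[0,p-2]$) under $\overline{\Gr}$, $\widetilde{(\cdot)}$, $\mathcal T_n$ and $\mathcal F_n$ has already been established --- see \autoref{Prop_FaltingTilde} and the constructions of $\mathcal T_n$ and $\mathcal F_n$ --- so every object produced above lies automatically in the appropriate parabolic category, and the bound $p-2$ is exactly what keeps $\widetilde{(\cdot)}$ and $\mathcal F_n$ available.
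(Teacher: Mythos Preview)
Your proposal is correct and follows essentially the same route as the paper: the paper's proof also hinges entirely on the canonical identification $\mathcal C_n^{-1}\circ\overline{\Gr}\cong\mathcal F_n\circ\widetilde{(\cdot)}$, deduced from $\mathcal C_n^{-1}=\mathcal F_n\circ\mathcal T_n$ together with $\mathcal T_n\circ\overline{\Gr}(V_\alpha,\nabla_\alpha,\Fil_\alpha)\cong\widetilde{(V_\alpha,\nabla_\alpha,\Fil_\alpha)}$ for every $\alpha$. The only difference is cosmetic: where you spell out explicit quasi-inverse functors $\mathbb A$ and $\mathbb B$ and verify the compatibilities, the paper simply asserts that a $1$-periodic flow is the same datum as a tuple $(V,\nabla,\Fil,\psi)$ with $\psi\colon\mathcal C_n^{-1}\circ\overline{\Gr}(V,\nabla,\Fil)\xrightarrow{\sim}(V,\nabla)$ and then invokes the canonical isomorphism.
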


\begin{proof}
    By the definition of parabolic Higgs-de Rham flows, to give a $1$-periodic parabolic Higgs-de Rham flows over $(Y_n,D_{Y_n})/S_n$ is equivalent to give a tuple $(V,\nabla,\Fil,\psi)$, where $(V,\nabla,\Fil)$ is an object in $\MIC^{par}_{p-2}(Y_n)$ and
    \[\psi:\mathcal C_n^{-1}\circ\overline{\Gr}(V,\nabla,\Fil)\longrightarrow (V,\nabla)\]
    is an isomorphism of parabolic de Rham bundles. To prove the equivalence of categories, it is sufficient to give a canonical isomorphism
    \[\mathcal C_n^{-1}\circ \overline{\Gr}(V,\nabla,\Fil)\cong \mathcal F_n\widetilde{(V,\nabla,\Fil)}.\]
    Since $\mathcal C_n^{-1}=\mathcal F_n\circ \mathcal T_n$, it reduce to prove
    \[\mathcal T_n\circ \overline{\Gr}(V,\nabla,\Fil)\cong \widetilde{(V,\nabla,\Fil)},\] which follows from
    $\mathcal T_n\circ \overline{\Gr}(V_{\alpha},\nabla_{\alpha},\Fil_{\alpha})\cong \widetilde{(V_{\alpha},\nabla_{\alpha},\Fil_{\alpha})}$ for any $\alpha \in \mathbb Q^n$.
\end{proof}

\begin{theorem} \label{thm_equFunctorHdRF&FFMod}
We keep the same notation as in \autoref{setup_PFF and PHDF}. Then there is an equivalence of categories from the category of $f$-periodic parabolic Higgs-de Rham flows over $(Y_n,D_{Y_n})/S_n$ to the category $\MF_{[0, p-2],f}^\nabla((Y_n, D_{Y_n})/S_n)$.
\end{theorem}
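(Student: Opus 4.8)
The plan is to bootstrap from the $1$-periodic case already settled in \autoref{lem_1-HiggsCorrespondence}, following the strategy of \cite[Theorem 5.3]{LSZ19}; the new input is precisely that the tilde functor $\widetilde{(\cdot)}$, the functor $\mathcal T_n$ and the parabolic Frobenius pullback $\mathcal F_n$ have all been built so as to respect the parabolic formalism componentwise and to commute with finite direct sums. The organizing principle is the \'etale splitting
\[W_n(\mathbb F_{p^f})\otimes_{\mathbb Z/p^n\mathbb Z} W_n \;\cong\; \prod_{j\in\mathbb Z/f\mathbb Z} W_n,\]
arising from the $f$ distinct embeddings $\tau_j=\sigma^j\circ\tau_0\colon W_n(\mathbb F_{p^f})\hookrightarrow W_n$ (here $\sigma$ is the Witt-vector Frobenius and $\mathbb F_{p^f}\subseteq k$), whose salient feature is that $\sigma$ permutes the corresponding idempotents $\epsilon_j$ cyclically, $\sigma(\epsilon_j)=\epsilon_{j+1}$.

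Starting from an object $M=(V,\nabla,\Fil,\varphi,\iota)$ of $\MF_{[0,p-2],f}^\nabla((Y_n,D_{Y_n})/S_n)$, I would first use $\iota$ to view $V$ as a module over $W_n(\mathbb F_{p^f})\otimes_{\mathbb Z/p^n\mathbb Z}\mathcal O_{Y_n}\cong\prod_j\mathcal O_{Y_n}$, obtaining a decomposition $V=\bigoplus_{j\in\mathbb Z/f\mathbb Z}V_j$ into parabolic subbundles; since the $\epsilon_j$ are horizontal central scalars lying in $\End(M)$, the connection, the parabolic structure and the Hodge filtration all split, so each $(V_j,\nabla_j,\Fil_j)$ is a filtered parabolic de Rham bundle of level in $[0,p-2]$. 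Because $\widetilde{(\cdot)}$ and $\mathcal F_n$ commute with the central idempotents, and because the base-Frobenius twist $\otimes_{\Phi}$ inside $\mathcal F_n$ carries the $j$-component to the $(j{+}1)$-component (this is the incarnation of $\sigma(\epsilon_j)=\epsilon_{j+1}$), the isomorphism $\varphi$ must restrict to isomorphisms $\varphi_j\colon\mathcal F_n\widetilde{(V_j,\nabla_j,\Fil_j)}\xrightarrow{\ \sim\ }(V_{j+1},\nabla_{j+1})$. Invoking the identification $\mathcal F_n\circ\widetilde{(\cdot)}\cong\mathcal C_n^{-1}\circ\overline{\Gr}$ from the proof of \autoref{lem_1-HiggsCorrespondence}, the chain $\varphi_0,\dots,\varphi_{f-1}$ is exactly the datum of an $f$-periodic parabolic Higgs--de Rham flow whose $i$-th graded Higgs term is $\Gr(V_i,\nabla_i,\Fil_i)$ and whose periodic mapping is $\varphi_{f-1}$. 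Conversely, given an $f$-periodic parabolic Higgs--de Rham flow with de Rham terms $(V,\nabla,\Fil)_0,\dots,(V,\nabla,\Fil)_{f-1}$ and periodic mapping $\psi$, I would set $V=\bigoplus_i V_i$, $\nabla=\bigoplus\nabla_i$, $\Fil=\bigoplus\Fil_i$, let $W_n(\mathbb F_{p^f})$ act on the $i$-th summand through $\tau_i$, and assemble $\varphi$ on $\mathcal F_n\widetilde{(V,\nabla,\Fil)}=\bigoplus_i\mathcal F_n\widetilde{(V_i,\nabla_i,\Fil_i)}$ from the flow steps $\mathcal C_n^{-1}$ for $i<f-1$ and from the last step followed by $\psi$ on the remaining summand. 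One then checks that $(V,\nabla,\Fil,\varphi)$ is a strict $p^n$-torsion parabolic Fontaine--Faltings module (local freeness, Griffiths transversality and level being inherited summandwise, $\varphi$ an isomorphism since each flow step and $\psi$ are) and that $\tau_\bullet$ is a $W_n(\mathbb F_{p^f})$-endomorphism structure --- the only nonformal point being that $\varphi$ commutes with this action, which again follows because $\mathcal F_n$ sends the scalar $\tau_i(a)$ on the $i$-th summand to the scalar $\tau_{i+1}(a)$. The two constructions are mutually quasi-inverse by unwinding definitions, and morphisms match up the same way: a morphism of flows $\{\varphi^{(n-1)}_{-1},\psi_0^{(n)},\dots\}$ corresponds to a $W_n(\mathbb F_{p^f})$-equivariant morphism of Fontaine--Faltings modules, each side being determined by finitely many of its terms.

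The main obstacle will be the step in the previous paragraph that identifies how $\mathcal F_n$ --- with its built-in base-Frobenius twist $\otimes_{\Phi}$ and its divisor rescaling $\gamma\mapsto p\gamma$ --- intertwines the idempotent decomposition with the cyclic shift $\epsilon_j\mapsto\epsilon_{j+1}$, so that $\varphi$ genuinely permutes the $f$ components in a cycle; this forces the parabolic weights of the $j$-th component to move under each step by $w\mapsto\langle pw\rangle$ along every boundary component, and one must observe that an $f$-periodic flow is automatically compatible with this constraint because after $f$ steps it returns to the same object (the same combinatorial mechanism that drives \autoref{Thm_SelfmapDominant} and \autoref{thm_PeriodicPointsDense}). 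Once this compatibility is pinned down, the remaining points --- local freeness, strictness of $\Fil$ for morphisms, the coherence of the truncated $(V,\nabla,\Fil)^{(n-1)}$-data that feeds $\mathcal C_n^{-1}$ at level $n$, and the cocycle bookkeeping over the small affine cover used to glue $\mathcal F_n$ --- are routine and parallel \cite[Section 5]{LSZ19} together with the proof of \autoref{lem_1-HiggsCorrespondence}.
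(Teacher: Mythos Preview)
Your proposal is correct and follows essentially the same route as the paper: decompose $V$ into $f$ pieces via the $W_n(\mathbb F_{p^f})$-action, use that the Frobenius inside $\mathcal F_n$ cyclically shifts these pieces to read off the flow data, and in the other direction take the direct sum and reduce to the $1$-periodic case of \autoref{lem_1-HiggsCorrespondence}. The only cosmetic difference is that you phrase the decomposition via the idempotents $\epsilon_j$ of $W_n(\mathbb F_{p^f})\otimes W_n$, whereas the paper writes it as the eigenspace decomposition for a generator $\xi$ of $W_n(\mathbb F_{p^f})$; your extra remarks about the parabolic weights moving by $w\mapsto\langle pw\rangle$ are correct but not needed for the argument, since this compatibility is already built into the parabolic functors $\widetilde{(\cdot)}$ and $\mathcal F_n$.
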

\begin{proof} Given a $f$-periodic Higgs-de Rham flows
\[\Flow = \left\{
(V, \nabla, \Fil)^{(n-1)}_{-1},
(E, \theta)_{0}^{(n)},
(V, \nabla, \Fil)_{0}^{(n)},
(E, \theta)_{1}^{(n)},
(V, \nabla, \Fil)_{1}^{(n)},
\cdots\right\}\]
over $(Y_n,D_{Y_n})/S_n$, let
\[(V,\nabla,\Fil)\coloneqq \bigoplus_{i=0}^{f-1}(V,\nabla,\Fil)^{(n)}_{i}.\]
Then by the definition of Higgs-de Rham flows, one has
\[\mathcal C_n^{-1}\circ \overline{\Gr}(V,\nabla,\Fil)\cong \bigoplus_{i=1}^{f}(V,\nabla,\Fil)^{(n)}_{i}.\]
Since $\Flow$ is $f$-periodic, the isomorphism of Higgs-de Rham flows $\Flow[f]\cong \Flow$ induces an isomorphism
\[\psi:\mathcal C_n^{-1}\circ \overline{\Gr}(V,\nabla,\Fil)\cong (V,\nabla,\Fil).\]
Thus the tuple $(V,\nabla,\Fil,\psi)$ is a $1$-periodic parabolic Higgs-de Rham flows, which gives a strict $p^n$-torsion parabolic Fontaine-Faltings modules $(V,\nabla,\Fil,\varphi)$ over $(Y_n,D_{Y_n})/S_n$ by \autoref{lem_1-HiggsCorrespondence}. Let $\xi$ be a generator of $W_n(\mathbb F_{p^f})$ over $\mathbb Z/p^n\mathbb Z$ and let $\sigma$ be the Frobenius on $W_n(\mathbb F_{p^f})$. By setting $\iota(\xi)$ to be the map $m_{\xi}\oplus m_{\sigma(\xi)}\oplus\cdots\oplus m_{\sigma^{f-1}(\xi)}$, where $m_{\sigma^{i}(\xi)}$ is the multiplication map by $\sigma^{i}(\xi)$ on $(V,\nabla,\Fil)_i^{(n)}$, we can define a $W_n(\mathbb F_{p^f})$-endomorphism structure
\[\iota: W_n(\mathbb F_{p^f}) \to \End (V,\nabla,\Fil,\varphi).\]

Conversely, given a strict $p^n$-torsion parabolic Fontaine-Faltings modules with $W_n(\mathbb F_{p^f})$-endomorphism structure $(V,\nabla,\Fil,\varphi,\iota)$, let
\[(V,\nabla,\Fil)_i^{(n)}\coloneqq (V,\nabla,\Fil)^{\iota(\xi)=\sigma^i(\xi)}\]
be the $\sigma^i(\xi)$-eigenspace of $\iota(\xi)$. Then by comparing the $\sigma^{i+1}(\xi)$-eigenspaces on the both side of the isomorphism
of $\varphi$, one gets isomorphisms
\[\varphi_i:\mathcal C_n^{-1}\circ \overline{\Gr}(V,\nabla,\Fil)_i^{(n)}\cong (V,\nabla,\Fil)_{i+1}^{(n)}\quad \text{ for } 0\geq i\geq f-2\]
and
\[\varphi_{f-1}:\mathcal C_n^{-1}\circ \overline{\Gr}(V,\nabla,\Fil)_{f-1}^{(n)}\cong (V,\nabla,\Fil)_{0}^{(n)}.\]
By setting
\[(V,\nabla,\Fil)^{(n-1)}_{-1}\coloneqq (V,\nabla,\Fil)^{(n)}_{f-1} \mod p^{n-1},\]
we defines a $f$-periodic Higgs-de Rham flows over $(Y_n,D_{Y_n})/S_n$. Obviously, the two processes are quasi-inverse to each other, thus inducing an equivalence of categories.
\end{proof}
\begin{remark}
When restricted to the open subset $Y_n\setminus D_{Y_n}$, the equivalence of categories in \autoref{thm_equFunctorHdRF&FFMod} coincides with that in \cite[Theorem 5.3]{LSZ19}.
\end{remark}

\subsubsection{Comparing with Krishnamoorthy-Sheng's work} In \cite{KrSh20}, Krishnamoorthy-Sheng established the theory of parabolic Higgs-de Rham flows over $(Y_1,D_{Y_1})/k$. They constructed the parabolic inverse Cartier functor on $(Y_1,D_{Y_1})/k$ as follows.

Let $f:(Y'_1, D_{Y'_1})\to (Y_1, D_{Y_1})$ be a cyclic cover of order $N$ with branch divisor $D_{Y_1}$ and let $G=\mathbb Z/N\mathbb Z$. Then
\[f^*D_{Y_1}=ND_{Y'_1}.\] There exists a pair of functors $(f_{par}^*,f_{par*})$ which is quasi-inverse to each other and induced an equivalence between the category of parabolic $\lambda$-flat bundle on $(Y_1,D_{Y_1})$ whose parabolic weights are contained in $\frac{1}{N}\mathbb Z$ and the category of $G$-equivariant $\lambda$-flat bundle on $(Y_1,D_{Y_1})$ (\cite[Proposition 2.14]{KrSh20}).
For any parabolic Higgs bundle $(E, \theta)$ over $(Y_1, D_{Y_1})/k$ whose weights are contained in $\mathbb{Z}/N\mathbb{Z}$, the parabolic inverse Cartier transform for $(E, \theta)$ is defined to be $f_{par*}\circ \mathcal C_{1,tirv}^{-1}\circ f_{par}^*(E, \theta)$, where $\mathcal C_{1,tirv}^{-1}$ is the inverse Cartier functor for usual vector bundles (with trivial parabolic structure) over $(Y_1,D_{Y_1})/k$.

In what follows, we will show that there is an isomorphism of functors between $\mathcal C_{1,triv}^{-1}\circ f^*$ and $f^*\circ \mathcal C_1^{-1}$. Since we have show $f_{par}^*\cong f^*$ in this case and $(f_{par}^*,f_{par*})$ are quasi-inverse to each other, then
\[f_{par*}\circ \mathcal C_{1,tirv}^{-1}\circ f_{par}^*\cong \mathcal C_1^{-1}.\]
This implies that the parabolic inverse Cartier functor constructed by Krishnamoorthy-Sheng coincides with $\mathcal C_1^{-1}$, hence their definition of parabolic Higgs-de Rham flows is a special case of ours.

Let $\widetilde{f}:(\widetilde{Y'_1}, D_{\widetilde{Y'_1}})\to (\widetilde{Y_1}, D_{\widetilde{Y}_1})$ be a $W_2$-lifting of $f$. Denote by $\Phi$ and $\Phi'$ the absolute Frobenius of $Y_1$ and $Y'_1$. Choose an open covering $\{\widetilde{U}_{\mu}\}_{\mu\in \Lambda}$ of $\widetilde{Y}_1$ and a family of morphisms $\{\widetilde{\Phi}_{\mu}:\widetilde{U}_{\mu}\to \widetilde{U}_{\mu}\}_{\mu\in \Lambda}$ such that $\widetilde{\Phi}_{\mu}$ lifts $\Phi\mid_{U_\mu}$. Let
\[U_{\mu}=\widetilde{U}_{\mu}\otimes \mathbb{Z}/p\mathbb{Z}, \quad U'_{\mu}=f^{-1}(U_{\mu}), \quad \widetilde{U'}_{\mu}=f^{-1}(\widetilde{U}_{\mu}).\]
Let $\widetilde{\Phi'}_{\mu}:\widetilde{U'}_\mu\to \widetilde{U'}_\mu$ be a lifting of $\Phi'\mid_{U'_\mu}$.
Consider the $W_2$-morphism $\varphi_{\mu}:\widetilde{V}_{\mu}\to \widetilde{U'_\mu}$ sitting in the following Cartesian diagram:
\begin{equation}\label{diag_FrobCartDiag}
\xymatrix{ \widetilde{V}_{\mu}\ar[r]^{\varphi_{\mu}}\ar[d]_{g_\mu}&\widetilde{U'_\mu}\ar[d]^{\widetilde{f}}\\
\widetilde{U}_{\mu}\ar[r]_{\widetilde{\Phi}_{\mu}}&\widetilde{U}_{\mu}
}.
\end{equation}
$\widetilde{V}_{\mu}$ is a lifting of $U'_\mu$ and $\varphi_\mu$ is a lifting of $\Phi'_\mu\mid_{U'_\mu}$. Then by \cite[Lemma 5.4]{DeIl87}, for any $\mu,\tau\in\Lambda$ there are canonical homomorphisms
\begin{equation*}
\begin{split}
h_\mu:&\Phi'^*\Omega_{U'_{\mu}/S}^1(\log D_{Y'}\cap U'_{\mu})\to\mathcal{O}_{U'_{\mu}}, \\
h_{\mu\tau}:&\Phi'^*\Omega_{U'_{\mu\tau}/S}^1(\log D_{Y'}\cap U'_{\mu\tau})\to\mathcal{O}_{U'_{\mu\tau}}, \\
h'_{\mu\tau}:&\Phi'^*\Omega_{U'_{\mu\tau}/S}^1(\log D_{Y'}\cap U'_{\mu\tau})\to\mathcal{O}_{U'_{\mu\tau}}, \\
\end{split}
\end{equation*}
such that
\begin{equation*}
\begin{split}
\rmd h_\mu=&\frac{\rmd \varphi_\mu}{p}-\frac{\rmd \widetilde{\Phi'}_\mu}{p}, \\
\rmd h_{\mu\tau}=&\frac{\rmd \widetilde{\Phi'}_\mu}{p}-\frac{\rmd \widetilde{\Phi'}_\tau}{p}, \\
\rmd h'_{\mu\tau}=&\frac{\rmd \varphi_\mu}{p}-\frac{\rmd \varphi_\tau}{p}, \\
\end{split}
\end{equation*}
and satisfy the cocycle condition
\begin{equation}\label{CocycleCondition}
h_\mu+h_{\mu\tau}=h_{\tau}+h'_{\mu\tau}.
\end{equation}
Let
\[(E_\mu, \theta_\mu)\coloneqq (E, \theta)\mid_{U_\mu}, \quad (E', \theta')\coloneqq f^*(E, \theta), \quad (E'_\mu, \theta'_\mu)\coloneqq (E', \theta')\mid_{U'_\mu}.\]
$\mathcal C_1^{-1}(E, \theta)$ is locally defined to be
$\left(\Phi^*E_\mu, \nabla_{can}+\frac{\rmd \widetilde{\Phi}_{\mu}}{p}\right)$ and glued by
\[\exp(h_{\mu\tau}(\Phi^*\theta)): \Phi^*E_{\mu\tau}\to \Phi^*E_{\mu\tau}.\]
 By the commutativity of the diagram \ref{diag_FrobCartDiag}, we have
\[f^* \frac{\rmd \widetilde{\Phi}_{\mu}}{p}=\frac{\rmd \varphi_{\mu}}{p}, \quad f^*h_{\mu\tau}=h'_{\mu\tau}.\]
 Thus $f^*\circ \mathcal C_1^{-1}(E, \theta)$ is isomorphic to the parabolic de Rham bundle which is locally isomorphic to $\left(\Phi'^*E'_\mu, \nabla_{can}+\frac{\rmd \varphi_\mu}{p}\right)$ and glued by
\[G'_{\mu\tau} \coloneqq \exp(h'_{\mu\tau}(\Phi'^*\theta')): \Phi'^*E'_{\mu\tau}\to \Phi'^*E'_{\mu\tau}.\]
\begin{proposition}\label{thm_CompareInverseCartierFunctor}
Let $f:(Y'_1, D_{Y'_1})\to (Y_1, D_{Y_1})$ be a cyclic cover of order $N$ with branch divisor $D_{Y_1}$. Then for any parabolic Higgs bundle $(E, \theta)$ over $(Y_1, D_{Y_1})/S$ whose parabolic weights are contained in $\frac{1}{N}\mathbb Z$, there is a natural isomorphism of de Rham bundle over $(Y'_1, D_{Y'_1})/S$
\[f^*\mathcal C_1^{-1}(E, \theta)\cong \mathcal C_{1,triv}^{-1}f^*(E, \theta),\]
where $C_{1,triv}^{-1}$ is the inverse Cartier functor for usual Higgs bundles. This implies that the parabolic inverse Cartier functor constructed in \cite{KrSh20} coincides with $\mathcal C_1^{-1}$.
\end{proposition}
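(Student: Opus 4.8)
The plan is to compare the two de Rham bundles after pullback to $Y'_1$ by writing both as explicit gluings of local models over the covering $\{U'_\mu\}_{\mu\in\Lambda}$ of $Y'_1$, and then to produce the identification out of the Deligne--Illusie homomorphisms $h_\mu, h_{\mu\tau}, h'_{\mu\tau}$ introduced above. Half of the work is already done: the local computation carried out just before the statement identifies $f^*\mathcal C_1^{-1}(E,\theta)$ with the de Rham bundle glued from the pieces $\bigl(\Phi'^*E'_\mu,\ \nabla_{can}+\frac{\rmd\varphi_\mu}{p}(\Phi'^*\theta')\bigr)$ via the transition automorphisms $G'_{\mu\tau}=\exp\bigl(h'_{\mu\tau}(\Phi'^*\theta')\bigr)$ of $\Phi'^*E'_{\mu\tau}$.

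First I would spell out the remaining local model. Since $(E',\theta')=f^*(E,\theta)$ has trivial parabolic structure by \autoref{Thm_PullBack}, the functor $\mathcal C_{1,triv}^{-1}$ applies to it exactly as to a usual Higgs bundle; by the Deligne--Illusie / Lan--Sheng--Zuo construction on $(Y'_1,D_{Y'_1})$ with the chosen Frobenius liftings $\widetilde{\Phi'}_\mu$, the de Rham bundle $\mathcal C_{1,triv}^{-1}(E',\theta')$ is glued from $\bigl(\Phi'^*E'_\mu,\ \nabla_{can}+\frac{\rmd\widetilde{\Phi'}_\mu}{p}(\Phi'^*\theta')\bigr)$ by $\exp\bigl(h_{\mu\tau}(\Phi'^*\theta')\bigr)$, where $h_{\mu\tau}$ is the homomorphism with $\rmd h_{\mu\tau}=\frac{\rmd\widetilde{\Phi'}_\mu}{p}-\frac{\rmd\widetilde{\Phi'}_\tau}{p}$. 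Thus both objects have the same underlying local bundles $\Phi'^*E'_\mu$; only the local connection forms and the gluing cocycles differ, and precisely through the change of Frobenius lifting from $\widetilde{\Phi'}_\mu$ to $\varphi_\mu$.

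Next I would build the comparison isomorphism from the homomorphisms $h_\mu$ with $\rmd h_\mu=\frac{\rmd\varphi_\mu}{p}-\frac{\rmd\widetilde{\Phi'}_\mu}{p}$: on each $U'_\mu$ set $\iota_\mu:=\exp\bigl(h_\mu(\Phi'^*\theta')\bigr)\in\Aut(\Phi'^*E'_\mu)$. A direct computation --- the same one that shows the inverse Cartier functor is independent of the Frobenius lifting --- shows that $\iota_\mu$ intertwines the two local connections, using $\rmd h_\mu=\frac{\rmd\varphi_\mu}{p}-\frac{\rmd\widetilde{\Phi'}_\mu}{p}$ together with the fact that $\theta'$ is a Higgs field (so $\Phi'^*\theta'\wedge\Phi'^*\theta'=0$, whence the components of $\Phi'^*\theta'$ commute). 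Because these components commute, all operators of the form $h(\Phi'^*\theta')$ commute with one another and their exponentials add; hence verifying compatibility with the two gluings, i.e. $\iota_\tau\circ\exp\bigl(h_{\mu\tau}(\Phi'^*\theta')\bigr)=\exp\bigl(h'_{\mu\tau}(\Phi'^*\theta')\bigr)\circ\iota_\mu$ on $U'_{\mu\tau}$, reduces exactly to the cocycle identity $h_\mu+h_{\mu\tau}=h_\tau+h'_{\mu\tau}$ of \eqref{CocycleCondition}. Therefore the $\iota_\mu$ glue to a global isomorphism of de Rham bundles $\mathcal C_{1,triv}^{-1}f^*(E,\theta)\xrightarrow{\sim}f^*\mathcal C_1^{-1}(E,\theta)$, which is natural in $(E,\theta)$ since every ingredient is.

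Finally, for the last assertion I would argue as follows. By \autoref{Thm_PullBack} one has $f_{par}^*\cong f^*$ on parabolic bundles with weights in $\frac1N\mathbb Z$, and by \cite[Proposition 2.14]{KrSh20} the pair $(f_{par}^*,f_{par*})$ is a quasi-inverse equivalence onto $G$-equivariant bundles, so $f_{par*}\circ f_{par}^*\cong\id$. Choosing the covering and the liftings $\widetilde{\Phi'}_\mu$ $G$-equivariantly --- possible since $f$ is Galois with group $G$ --- makes $\mathcal C_{1,triv}^{-1}$ $G$-equivariant, so the isomorphism just proved yields
\[f_{par*}\circ\mathcal C_{1,triv}^{-1}\circ f_{par}^*\cong f_{par*}\circ\mathcal C_{1,triv}^{-1}\circ f^*\cong f_{par*}\circ f^*\circ\mathcal C_1^{-1}\cong f_{par*}\circ f_{par}^*\circ\mathcal C_1^{-1}\cong\mathcal C_1^{-1},\]
and the left-hand side is Krishnamoorthy--Sheng's parabolic inverse Cartier functor. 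I expect the main obstacle to be purely organizational: keeping all the identifications straight --- which $W_2$-lifting, which Frobenius lifting on source versus target of $\varphi_\mu$, and the $G$-equivariance needed to apply $f_{par*}$ --- rather than any substantive difficulty, since the inverse Cartier functor is canonical and the genuine local input (the homomorphisms $h_\mu,h_{\mu\tau},h'_{\mu\tau}$ and their cocycle relation) is already in place.
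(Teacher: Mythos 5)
Your core comparison is the same as the paper's: the local isomorphisms $\exp\bigl(h_\mu(\Phi'^*\theta')\bigr)$ between the $\varphi_\mu$-model $\bigl(\Phi'^*E'_\mu,\nabla_{can}+\frac{\rmd\varphi_\mu}{p}\bigr)$ and the $\widetilde{\Phi'}_\mu$-model, glued by means of the relation \eqref{CocycleCondition}, and the deduction of the Krishnamoorthy--Sheng statement from $f_{par}^*\cong f^*$ together with the quasi-inverse pair $(f_{par}^*,f_{par*})$, are exactly Steps 2--3 of the paper's proof plus the discussion preceding the proposition. (In your gluing check the roles of $h_{\mu\tau}$ and $h'_{\mu\tau}$ are interchanged relative to the direction you chose for $\iota_\mu$; since all the operators $h(\Phi'^*\theta')$ commute this is only a convention slip, the identity actually needed being precisely $h_\mu+h_{\mu\tau}=h_\tau+h'_{\mu\tau}$.)

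The genuine gap is at the point where you declare that ``half of the work is already done.'' The identification of $f^*\mathcal C_1^{-1}(E,\theta)$ with the de Rham bundle glued from the pieces $\bigl(\Phi'^*E'_\mu,\nabla_{can}+\frac{\rmd\varphi_\mu}{p}\bigr)$ by $G'_{\mu\tau}$ is justified, in the paper's proof, only when $(E,\theta)$ has trivial parabolic structure; for a genuinely parabolic $(E,\theta)$ both $\mathcal C_1^{-1}$ (through the parabolic Frobenius pullback) and the parabolic pullback $f^*$ are defined as unions over $\gamma\in\mathbb Q^n$ of twisted pieces, and one has to check that these two unions interact correctly --- the weights are multiplied by $p$ under $\mathcal C_1^{-1}$ and by $N$ under $f^*$, become integral, and must reproduce exactly the subsheaf $\mathcal C_{1,triv}^{-1}f^*(E,\theta)$. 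This is the content of the paper's Step 1: one first proves the isomorphism for usual Higgs bundles, obtaining $\psi$ for $\bigl(E_0(D_Y),\theta_0(D_Y)\bigr)$, and then uses the decomposition $(E,\theta)=\bigcup_{\gamma}\bigl(E(-\gamma D)_0,\theta(-\gamma D)_0\bigr)\otimes\bigl(\mathcal O_Y(\gamma D),0\bigr)$ together with the compatibility of both sides with tensoring by shifting parabolic line bundles, in particular $f^*\mathcal C_1^{-1}\bigl(\mathcal O_Y(\gamma D),0\bigr)=\bigl(\mathcal O_{Y'}(pf^*(\gamma D)),\rmd(pf^*(\gamma D))\bigr)=\mathcal C_{1,triv}^{-1}f^*\bigl(\mathcal O_Y(\gamma D),0\bigr)$, to show $\psi\bigl(f^*\mathcal C_1^{-1}(E,\theta)\bigr)=\mathcal C_{1,triv}^{-1}f^*(E,\theta)$ inside $\mathcal C_{1,triv}^{-1}f^*\bigl(E_0(D_Y),\theta_0(D_Y)\bigr)$. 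Without some such reduction your argument establishes the proposition only in the trivially parabolic case, which is not where the content of the statement lies; the rest of your outline (including the $G$-equivariance remark, which is anyway automatic because the inverse Cartier functor is independent of the choices of coverings and Frobenius liftings) is sound.
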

\begin{proof}
The verification details are contained in the following steps.

\textit{Step 1. Reduce to usual Higgs bundles.} If the proposition holds for usual Higgs bundles, then for any parabolic Higgs bundle $(E, \theta)$, there is an isomorphism of parabolic de Rham bundles:
\[\psi:f^*\mathcal C_1^{-1}(E_0(D_Y), \theta_0(D_Y))\cong \mathcal C_{1,triv}^{-1}f^*(E_0(D_Y), \theta_0(D_Y)).\]
Considering $f^*\mathcal C_1^{-1}(E, \theta)$ as a sub parabolic de Rham bundle of $f^*\mathcal C_1^{-1}(E_0(D_Y), \theta_0(D_Y))$, it sufficient to show
\[\psi\big(f^*\mathcal C_1^{-1}(E, \theta)\big)=\mathcal C_{1,triv}^{-1}f^*(E, \theta).\]
since there is a decomposition of parabolic Higgs bundles:
\[(E, \theta)= \bigcup_{\gamma\in\mathbb{Q}^n}\big(E(-\gamma D)_0, \theta(-\gamma D)_0\big) \otimes \big(\mathcal{O}_Y(\gamma D), 0\big),\]
we see that
\begin{equation*}
\begin{split}
\psi\big(f^*\mathcal C_1^{-1}(E, \theta)\big)&=\bigcup_{\gamma\in\mathbb{Q}^n}\psi\Big(f^*\mathcal C_1^{-1}\big(E(-\gamma D)_0, \theta(-\gamma D)_0\big)\Big)\otimes \psi\Big(f^*\mathcal C_1^{-1}\big(\mathcal{O}_Y(\gamma D), 0\big)\Big)\\
&=\bigcup_{\gamma\in\mathbb{Q}^n}\mathcal C_{1,triv}^{-1}f^*\big(E(-\gamma D)_0, \theta(-\gamma D)_0\big)\otimes \Big(\mathcal{O}_{Y'}\big(pf^*(\gamma D_Y)\big), \rmd\big(pf^*(\gamma D_Y)\big)\Big)\\
&=\bigcup_{\gamma\in\mathbb{Q}^n}\mathcal C_{1,triv}^{-1}f^*\big(E(-\gamma D)_0, \theta(-\gamma D)_0\big)\otimes \mathcal C_1^{-1}f^*\big(\mathcal{O}_Y(\gamma D), 0\big)\\
&=\mathcal C_{1,triv}^{-1}f^*(E, \theta).
\end{split}
\end{equation*}
The proposition follows. In the rest part of the proof, since $\mathcal C_{1,triv}^{-1}$ coincides with $\mathcal C_1^{-1}$ for Higgs bundles with trivial parabolic structure, we simply write $\mathcal C_1^{-1}$ for $\mathcal C_{1,triv}^{-1}$.

\textit{Step 2. Local isomorphism.} Assume $(E, \theta)$ is a usual Higgs bundle. By the construction of the inverse Cartier functor, $\mathcal C_1^{-1}\circ f^*(E, \theta)$ is locally isomorphic to
\[\left(\Phi'^*E'_\mu, \nabla_{can}+\frac{\rmd \widetilde{\Phi'}_{\mu}}{p}\right)\]
and glued by
\[G_{\mu\tau}\coloneqq \exp(h_{\mu\tau}(\Phi'^*\theta')):\Phi'^*E'_{\mu\tau}\to \Phi'^*E'_{\mu\tau}.\]
By the discussion above, $f^*\circ \mathcal C_1^{-1}(E, \theta)$ is locally isomorphic to
\[\left(\Phi'^*E'_\mu, \nabla_{can}+\frac{\rmd \varphi_{\mu}}{p}\right).\]
Then
\[\psi_\mu\coloneqq\exp(h_\mu(\Phi'^*\theta')):\left(\Phi'^*E'_\mu, \nabla_{can}+\frac{\rmd \varphi_{\mu}}{p}\right)\to \left(\Phi'^*E'_\mu, \nabla_{can}+\frac{\rmd \widetilde{\Phi'}_{\mu}}{p}\right)
\]
is a local isomorphism between $f^*\circ \mathcal C_1^{-1}(E, \theta)$ and $\mathcal C_1^{-1}\circ f^*(E, \theta)$.


\textit{Step 3. Gluing isomorphisms.} We need to show that the local isomorphisms glue to form a global isomorphism. By the cocycle condition \ref{CocycleCondition} and the integrability of $\theta$,
\begin{equation*}
\begin{split}
G_{\mu\tau}\circ \psi_\mu&=\exp(h_{\mu\tau}(\Phi'^*\theta')) \circ \exp(h_{\mu}(\Phi'^*\theta'))\\
&=\exp((h_{\mu}+h_{\mu\tau})(\Phi'^*\theta'))\\
&=\exp(h_{\tau}(\Phi'^*\theta'))\circ\exp(h'_{\mu\tau}(\Phi'^*\theta'))\\
&=\psi_\tau\circ G'_{\mu\tau}.
\end{split}
\end{equation*}
Thus, the diagram
\begin{equation*}
\xymatrix{
\Phi'^*E'_{\mu\tau}\ar[r]^{\psi_\mu}\ar[d]_{G'_{\mu\tau}}&
\Phi'^*E'_{\mu\tau}\ar[d]^{G_{\mu\tau}}\\
\Phi'^*E'_{\mu\tau}\ar[r]_{\psi_{\tau}}&
\Phi'^*E'_{\mu\tau}\\
}
\end{equation*}
is commutative and the family of local isomorphisms $\{\psi_\mu\}_{\mu\in \Lambda}$ glues to form an isomorphism
\[\psi:f^*\mathcal C_1^{-1}(E, \theta)\cong \mathcal C_1^{-1}f^*(E, \theta).\qedhere\]
\end{proof}

\section{\bf An Application.}\label{sec_App}
In this section, as an application of the Higgs-de Rham flow theory of parabolic bundles, we construct a selfmap on the moduli space of parabolic Higgs bundles and determine the periodic parabolic Higgs bundles arising from families of elliptic curves.

\subsection{Selfmap on the moduli space of parabolic Higgs bundles} \label{sec_SelfmapParaHiggsBundle}
In \cite{SYZ22}, Sun, Yang and Zuo constructed a selfmap on the moduli space of Higgs bundles over the projective line, whose periodic points are associated with periodic Higgs bundles. By studying the properties of the selfmap, they constructed infinitely many twisted Fontaine-Faltings modules. In this subsection, we generalize the selfmap to the case of parabolic Higgs bundles.


Let $k$ be an algebraically closed field of characteristic $p>2$. Fix a $W_2(k)$-lifting of $\mathbb{P}^1_k$. Let $N \geq 2$ be a positive integer such that $p\nmid N$.
Let $M_{\Hig}^{N}$ be the moduli space of semistable graded parabolic Higgs bundles of rank $2$ degree 0 over $\mathbb{P}^1_k$ such that the Higgs fields have $m$ poles $\{x_1, x_2, \ldots, x_m\}$ and the parabolic weights are in the set
\[J_N\coloneqq\left\{\frac{k}{N}:k\in [1,N-1]\cap \mathbb Z,
 \gcd(k, N)=1\right\}\]
at $x_1$ and 0 at other points. Let $M_{\dR}^{N}$ be the moduli space of semistable graded parabolic de Rham bundles of rank $2$ degree 0 over $\mathbb{P}^1_k$ such that the connections have $m$ poles $\{x_1, x_2, \ldots, x_m\}$ and the parabolic weights are in the set $J_N$ at $x_1$ and $0$ at other points.

\subsubsection{Parabolic vector bundles over $\mathbb{P}^1_k$} We first classify the parabolic vector bundles on $\mathbb{P}^1_k$. Grothendieck's theorem shows that any vector bundle on $\mathbb{P}^1_k$ is isomorphic to a direct sum of line bundles. The theorem remains true for parabolic vector bundles. As a generalization of Grothendieck's theorem, we can prove that:
\begin{proposition}\label{thm_ParaGrothendieckThm}
Let $V$ be a parabolic bundle on $(\mathbb{P}^1_k, x_1)/k$ whose parabolic weights are contained in $\frac{1}{N}\mathbb{Z}$ with $p \nmid N$. Then $V$ is isomorphic to a direct sum of parabolic line bundles.
\end{proposition}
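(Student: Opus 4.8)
The plan is to reduce the statement to the classical theorem of Grothendieck for ordinary vector bundles on $\mathbb{P}^1_k$ by passing to a cyclic cover that trivializes the parabolic structure. Since all parabolic weights of $V$ lie in $\frac{1}{N}\mathbb{Z}$ and $p\nmid N$, one may choose a cyclic cover $f\colon \mathbb{P}^1_k\to\mathbb{P}^1_k$ of degree $N$ totally ramified over $x_1$ (and over one other point, so as to remain a map of projective lines); this fits the hypotheses of \autoref{setup_PullBackPar}. By \autoref{Thm_PullBack} the pullback $f^*V=f_{par}^*V$ is an honest vector bundle on $\mathbb{P}^1_k$ with trivial parabolic structure, and moreover it carries a natural $G$-equivariant structure for $G=\mathbb{Z}/N\mathbb{Z}=\Gal(f)$. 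The first step is therefore to record that the pullback and pushforward functors between parabolic bundles on $(\mathbb{P}^1_k,x_1)$ with weights in $\frac1N\mathbb{Z}$ and $G$-equivariant bundles on $\mathbb{P}^1_k$ are mutually quasi-inverse (this is the analogue for bundles of \cite[Proposition 2.14]{KrSh20}, and in the $\lambda$-flat setting it is \autoref{thm_ComparePullbackDeRhamBundle}); in particular $V$ is recovered as the $G$-invariant part of $f_*(f^*V)$ in the appropriate parabolic sense, so it suffices to show that $f^*V$ decomposes $G$-equivariantly into a direct sum of $G$-linearized line bundles.

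The second, and genuinely substantive, step is the $G$-equivariant Grothendieck decomposition. By ordinary Grothendieck $f^*V\cong\bigoplus_i\mathcal{O}_{\mathbb{P}^1}(a_i)$, and I would argue that this decomposition can be chosen $G$-equivariantly: group the summands by their degree, so that $f^*V\cong\bigoplus_{a}W_a\otimes\mathcal{O}_{\mathbb{P}^1}(a)$ where $W_a=\Hom_{\mathcal{O}}(\mathcal{O}(a),f^*V)$ is naturally a $k$-vector space with a semilinear $G$-action; since $|G|=N$ is coprime to $p=\Char k$, the action of $G$ on each $W_a$ is completely reducible, and because $G$ is cyclic over an algebraically closed $k$ containing $\mu_N$ (which it does, as $p\nmid N$), each $W_a$ splits as a direct sum of characters of $G$. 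This produces a $G$-equivariant splitting of $f^*V$ into $G$-linearized line bundles $\mathcal{O}(a)$ twisted by characters of $G$.

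Finally, I would push this decomposition back down: applying the equivalence of the first step, each $G$-linearized summand $\chi\otimes\mathcal{O}_{\mathbb{P}^1}(a)$ descends to a parabolic line bundle on $(\mathbb{P}^1_k,x_1)$ — concretely a line bundle $L$ of the appropriate degree with a single parabolic weight at $x_1$ determined by the character $\chi$, i.e. an object of the form $L(\gamma x_1)$ as in \autoref{exa:paraLineBund} — and direct sums are preserved, giving $V\cong\bigoplus_j L_j(\gamma_j x_1)$. The main obstacle is the second step: one must make sure the Grothendieck decomposition can genuinely be arranged to be $G$-equivariant, which is where the coprimality $p\nmid N$ is essential (so that $kG$ is semisimple and the isotypic decomposition of the $\Hom$-spaces exists); one should also check that the descent of a $G$-linearized line bundle is again \emph{locally abelian} in the sense of the definition after \autoref{exa:paraLineBund}, but this is automatic since locally on $\mathbb{P}^1_k$ the cover and the equivalence identify it with a parabolic line bundle. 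A minor point to handle is that a cyclic cover of $\mathbb{P}^1$ ramified at a single point does not exist, so one ramifies at $x_1$ and at an auxiliary point away from the marked locus and checks that the extra branching introduces only a trivial parabolic weight there, leaving the conclusion unaffected.
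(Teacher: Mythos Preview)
Your approach is essentially the same as the paper's: pull back along an $N$-cyclic cover $f\colon\mathbb{P}^1_k\to\mathbb{P}^1_k$ ramified over $x_1$ and one auxiliary point, invoke the equivalence between parabolic bundles with weights in $\frac1N\mathbb{Z}$ and $G$-equivariant bundles upstairs, decompose $f^*V$ $G$-equivariantly into line bundles, and push back down. The paper simply cites \cite[Theorem~3.1]{BiFr24} for the $G$-equivariant Grothendieck step and \cite[Lemma~3.6]{Bis97} for the quasi-inverse $f_{par*}$, whereas you sketch the equivariant decomposition by hand.

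One technical slip in your sketch: the formula $W_a=\Hom_{\mathcal{O}}(\mathcal{O}(a),f^*V)$ is not the multiplicity space --- for instance if $f^*V=\mathcal{O}\oplus\mathcal{O}(1)$ then $\Hom(\mathcal{O},f^*V)$ is three-dimensional, not one-dimensional. The correct way to extract $W_a$ is via the Harder--Narasimhan filtration, which is canonical and hence $G$-stable; the associated graded pieces are of the form $\mathcal{O}(a)^{m_a}$, the filtration splits $G$-equivariantly because the relevant $\operatorname{Ext}^1$ groups already vanish on $\mathbb{P}^1$, and then the $G$-action on each $\mathcal{O}(a)^{m_a}$ (relative to the natural linearization of $\mathcal{O}(a)$) is a genuine $k$-linear representation of $G$ on $k^{m_a}$, which decomposes into characters since $p\nmid N$. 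With that correction (and ``$k$-linear'' rather than ``semilinear''), your argument goes through and recovers exactly what \cite{BiFr24} proves.
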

\begin{proof}
Without loss of generality, we can assume that $x_1=\infty$. Let
\[f:\mathbb{P}^1_{k}\to \mathbb{P}^1_{k}, \quad x\to x^N\]
 be the cyclic cover of order $N$ with branch divisor $0+\infty$ and group $G\cong \mathbb{Z}/N\mathbb{Z}$. Then $f^* V$ is a $G$-equivariant bundle. In \cite[Theorem 3.1]{BiFr24}, Biswas-Machu proved that any $G$-equivariant bundle on $\mathbb{P}^1_{\mathbb{C}}$ is isomorphic to a direct sum of $G$-equivariant line bundles when $G$ is a finite cyclic group. The same method can be applied to $\mathbb{P}^1_{k}$ when $\Char k\nmid N$ to prove that any $G$-equivariant vector bundle on $\mathbb{P}^1_{k}$ is isomorphic to a direct sum of $G$-equivariant line bundles. By \cite[Lemma 3.6]{Bis97}, there is a functor $f_{par*}$ from the category of $G$-equivariant vector bundles over $\mathbb{P}^1_{k}$  to the category of parabolic vector bundles over $\mathbb{P}^1_{k}$ whose parabolic weights are contained in $\frac{1}{N}\mathbb Z$, which is quasi-inverse to $f^*$.
 Thus $V\cong f_{par*}f^* V$ is isomorphic to a direct sum of parabolic line bundles.
\end{proof}

In the rest parts of the section, for any $\lambda \in \mathbb Q$, we denote by $$\mathcal O(\lambda)\coloneqq \mathcal O_{\mathbb P^1_k}(\lambda x_1).$$

\subsubsection{The self map on $M_{\Hig}^{N}$}
For any isomorphism class of Higgs bundles $[(E, \theta)]\in M_{\Hig}^{N}(k)$, $E$ is of the form $\mathcal{O}(\lambda)\oplus \mathcal{O}(\eta)$ with $\langle \lambda\rangle, \langle \eta\rangle \in J_N$ and $\lambda\geq \eta$ by \autoref{thm_ParaGrothendieckThm}. By the condition of degree 0, we have $\lambda=-\eta>0$ and by the condition that $(E, \theta)$ is semistable,
\[\theta\left(\mathcal{O}(\lambda)\right)\not\subseteq \mathcal{O}(\lambda)\otimes \Omega^1_{\mathbb{P}^1}(m).\]
 If not, the pair $(\mathcal{O}(\lambda), \theta\mid_{\mathcal{O}(\lambda)})$ is a sub parabolic Higgs bundle of $(E, \theta)$ with slope $\lambda>0$, which contradicts the semistability of $(E, \theta)$. Then the Higgs field
\[\theta: \mathcal{O}(\lambda)\to \mathcal{O}(-\lambda)\otimes \Omega^1_{\mathbb{P}^1}(m)\]
is nonzero in $\Hom\left(\mathcal{O}(\lambda), \mathcal{O}(-\lambda+m-2)\right)$, so we have $\lambda\leq \frac{m}{2}-1$ and $m\geq 3$. Thus $M_{\Hig}^{N}$ admits a decomposition
\[M_{\Hig}^{N}=\coprod_{\langle\lambda\rangle\in J_N, 0\leq \lambda\leq \frac{m}{2}-1} M_{\Hig}^{N, \lambda},\]
where $M_{\Hig}^{N, \lambda}$ is the sub moduli space of parabolic Higgs bundles of the form $\mathcal{O}(\lambda)\oplus\mathcal{O}(-\lambda)$,
\[M_{\Hig}^{N, \lambda}\cong \mathbb{P}\Hom\left(\mathcal{O}(\lambda), \mathcal{O}(-\lambda+m-2)\right)\cong \mathbb{P}^{[m-2-2\lambda]},\]
 and $\langle \lambda\rangle$ is the unique number in $[0, 1)$ such that $\lambda-\langle \lambda\rangle\in \mathbb{Z}$.

\[\mathcal{C}_1^{-1}: M_{\Hig}^{N}\to M_{\dR}^{N}.\]
In \cite{SYZ22}, Sun, Yang and Zuo introduced the notion of self map as follows: let $x=[(E, \theta)]\in M_{\Hig}^{N}(k)$ and $(V, \nabla)=\mathcal C_1^{-1}(E, \theta)$. Then $V\cong \mathcal{O}(\lambda_x)\oplus\mathcal{O}(-\lambda_x)$ for some $\lambda_x>0$ such that $\langle \lambda \rangle\in J_N$ by \cite[Proposition 2.18]{KrSh20}. By taking grading with respect to the filtration $\Fil$
\[0\subseteq \mathcal{O}(\lambda_x)\subseteq \mathcal{O}(\lambda_x)\oplus\mathcal{O}(-\lambda_x)=V,\]
we obtain a parabolic Higgs bundle whose isomorphism class is in $M_{\Hig}^{N, \lambda_x}(k)$. Let $\Gr$ be the grading functor with respect to $\Fil$. The self map is defined on $M_{\Hig}^{N}(k)$ by sending $[(E, \theta)]$ to $[\Gr\circ \mathcal C_1^{-1}(E, \theta)]$.

\subsubsection{Universal parabolic Higgs bundle}
Now we fix one of the maximal irreducible components $M_{\Hig}^{N, \frac{d}{N}}$ of $M_{\Hig}^{N}$ with $\frac{d}{N}\in J_N$ and $d<\frac{N}{2}$. For any $[(E, \theta)]\in M_{\Hig}^{N, \frac{d}{N}}(k)$, the parabolic weights of $\mathcal C_1^{-1}(E, \theta)$ at $x_1$ are $\left\{\left\langle\frac{pd}{N}\right\rangle, \left\langle\frac{p(N-d)}{N}\right\rangle\right\}$ by \cite[Proposition 2.18]{KrSh20}.
Let
\[d'\coloneqq\min\left\{N\left\langle\frac{pd}{N}\right\rangle, N\left\langle\frac{p(N-d)}{N}\right\rangle\right\}.\]
Let $\pi_i (i=1, 2)$ be the two projections of $\mathbb{P}^1_k\times M_{\Hig}^{N, \frac{d}{N}}$. There exists a parabolic Higgs bundle $(E^u, \theta^u)$ on $\mathbb{P}^1_k\times M_{\Hig}^{N, \frac{d}{N}}$ such that
\[E^u=\pi_1^*\left(\mathcal{O}\left(\frac{d}{N}\right)\oplus\mathcal{O}\left(-\frac{d}{N}\right)\right), \quad (E^u, \theta^u)\mid_{\mathbb{P}^1\times\{x\}}\cong (E_x, \theta_x),\]
where $x\in M_{\Hig}^{N, \frac{d}{N}}(k)$ is the $k$-point corresponding to the isomorphism class of parabolic Higgs bundles $[(E_x, \theta_x)]$. Indeed, let $\underline{M}_{\Hig}^{N, \frac{d}{N}}$ be the moduli functor represented by $M_{\Hig}^{N, \frac{d}{N}}$. The parabolic Higgs bundle $(E^u, \theta^u)$ corresponds to the identity map in $\underline{M}_{\Hig}^{N, \frac{d}{N}}\left(M_{\Hig}^{N, \frac{d}{N}}\right)=\Hom\left(M_{\Hig}^{N, \frac{d}{N}}, M_{\Hig}^{N, \frac{d}{N}}\right)$. The parabolic Higgs bundle $(E^u, \theta^u)$ is called the \emph {universal parabolic Higgs bundle}.

Applying the inverse Cartier functor on the universal Higgs bundle, we obtain a de Rham bundle
\[(V^u, \nabla^u)\coloneqq \mathcal C_1^{-1}(E^u, \theta^u)\]
on $\mathbb{P}^1_k\times M_{\Hig}^{N, \frac{d}{N}}$. For any $x\in M_{\Hig}^{N, \frac{d}{N}}(k)$,
\[(V_x, \nabla_x)\coloneqq (V^u, \nabla^u)\mid_{\mathbb{P}^1_k\times \{x\}}\cong \mathcal C_1^{-1}(E_x, \theta_x),\]
where$[(E_x, \theta_x)]$ is the isomorphism class corresponding to $x$.

\[(V^u, \nabla^u)\mid_{\mathbb{P}^1\times\{y\}}\cong (V_y, \nabla_y),\]

\begin{proposition}
The map
\[\lambda: M_{\Hig}^{N, \frac{d}{N}}\to \frac{1}{N}\mathbb{Z}, \quad x\mapsto \lambda_x\]
 is upper semicontinuous.
\end{proposition}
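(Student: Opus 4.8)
The plan is to reduce the claim to the classical upper semicontinuity of $h^0$ in a flat family of sheaves on $\mathbb P^1$, applied to the universal de Rham bundle. Write $T=M_{\Hig}^{N,\frac dN}$ and recall the universal de Rham bundle $(V^u,\nabla^u)=\mathcal C_1^{-1}(E^u,\theta^u)$ on $\mathbb P^1_k\times T$, whose restriction to the fibre over a point $x$ is $(V_x,\nabla_x)$ with underlying parabolic bundle $\mathcal O(\lambda_x x_1)\oplus\mathcal O(-\lambda_x x_1)$, $\lambda_x>0$. Since $\lambda$ takes values in the discrete set $\frac1N\mathbb Z$, upper semicontinuity means that $\{x\in T:\lambda_x\ge c\}$ is closed for every $c$. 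First I would invoke \cite[Proposition 2.18]{KrSh20}: the parabolic weights of $V_x$ at $x_1$, in particular the weight $\langle\lambda_x\rangle$ of its positive Grothendieck summand $\mathcal O(\lambda_x x_1)$, do not depend on $x$; call that common value $w_0$, which lies in $(0,1)$ because all the weights lie in $J_N\subset(0,1)$. Hence $\lambda_x=\lfloor\lambda_x\rfloor+w_0$ with $\lfloor\lambda_x\rfloor\ge 0$, and it suffices to prove that the integer $a_x:=\lfloor\lambda_x\rfloor$ is upper semicontinuous on $T$.

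Next I would pass from the parabolic bundle $V^u$ to the honest vector bundle $W:=(V^u)_0$, its weight‑zero part. Locally on $\mathbb P^1_k\times T$ a parabolic vector bundle is a direct sum of parabolic line bundles $L_i\bigl(\gamma_i(x_1\times T)\bigr)$, whose weight‑zero parts are the line bundles $L_i\bigl(\lfloor\gamma_i\rfloor(x_1\times T)\bigr)$; therefore $W$ is locally free of rank $2$ on $\mathbb P^1_k\times T$, in particular flat over $T$. Because $V^u$ is a flat family of parabolic bundles over $T$, its restriction to the fibre over $x$ is $V_x$ and $W|_{\mathbb P^1_k\times\{x\}}=(V_x)_0$ (\cite{Mar92}). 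Using $w_0\in(0,1)$ one has $\lambda_x\notin\mathbb Z$, so $(V_x)_0=\mathcal O(a_x x_1)\oplus\mathcal O\bigl((-1-a_x)x_1\bigr)$; since $a_x\ge 0>-1-a_x$, for every integer $n$
\[
a_x\ge n\iff H^0\bigl(\mathbb P^1_k,\,W|_{\mathbb P^1_k\times\{x\}}\otimes\mathcal O(-nx_1)\bigr)\neq 0 .
\]

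Finally I would apply the semicontinuity theorem \cite[III 12.8]{Har77} to the proper flat morphism $\pi_2\colon\mathbb P^1_k\times T\to T$ and the $T$‑flat coherent sheaf $W\otimes\pi_1^*\mathcal O(-nx_1)$: the function $x\mapsto\dim_{\kappa(x)}H^0\bigl(\mathbb P^1_{\kappa(x)},W_x(-nx_1)\bigr)$ is upper semicontinuous on $T$. Therefore $\{x\in T:a_x\ge n\}$ is closed for all $n\in\mathbb Z$, which says that $a_x$, hence $\lambda_x=a_x+w_0$, is upper semicontinuous. The only genuinely non‑formal ingredient is the reduction in the first two paragraphs — that $(V^u)_0$ is locally free with the expected fibrewise restriction, equivalently that the parabolic weights of $\mathcal C_1^{-1}$ of a point of $T$ stay constant over $T$ — which rests on \cite[Proposition 2.18]{KrSh20}; after that the statement is the standard jumping‑of‑splitting‑type phenomenon on $\mathbb P^1$.
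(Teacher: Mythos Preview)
Your overall strategy---reduce to semicontinuity of $h^0$ for the universal de Rham bundle $V^u$ on $\mathbb P^1_k\times T$---is exactly what the paper does. The gap is in your reduction to the integer part $a_x=\lfloor\lambda_x\rfloor$. You claim that \cite[Proposition~2.18]{KrSh20} forces the weight $\langle\lambda_x\rangle$ of the positive Grothendieck summand to be a constant $w_0$; but that proposition only says the \emph{unordered pair} of weights of $V_x$ at $x_1$ equals $\{\langle pd/N\rangle,\langle p(N-d)/N\rangle\}=\{w_1,w_2\}$. It does not tell you which of $w_1,w_2$ is attached to $\mathcal O(\lambda_x x_1)$, and a priori this could switch as $x$ varies. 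In that case $a_x$ can stay constant while $\lambda_x$ drops from $a_x+w_2$ to $a_x+w_1$, so upper semicontinuity of $a_x$ alone does not give upper semicontinuity of $\lambda_x$. Your cohomological test with $W=(V^u)_0$ twisted by \emph{integer} line bundles $\mathcal O(-nx_1)$ sees exactly $a_x$ and cannot separate $\lambda_x=a_x+w_1$ from $\lambda_x=a_x+w_2$.

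The paper avoids this by twisting $V^u$ by the \emph{parabolic} line bundle $\pi_1^*\mathcal O\!\left(-\alpha-\tfrac1N\right)$ for each $\alpha\in\tfrac1N\mathbb Z$ and taking the weight-zero piece; then $\lambda_x\le\alpha$ is equivalent to $H^0$ of the fibre vanishing, and ordinary semicontinuity gives openness of $U_\alpha=\{\lambda_x\le\alpha\}$. In your language this amounts to replacing $(V^u)_0$ by the sheaves $(V^u)_{-\alpha-1/N}$ for all $\alpha\in\tfrac1N\mathbb Z$, not just integer indices. With that single change your argument goes through and becomes the paper's proof; everything else you wrote (flatness of the weight-zero piece, compatibility of restriction to fibres, the final appeal to \cite[III~12.8]{Har77}) is fine.
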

\begin{proof}
Let
\[U_{\alpha}\coloneqq\{x\in M_{\Hig}^{N, \frac{d}{N}}:\lambda_x\leq \alpha\}\]with $\alpha\in \frac{1}{N}\mathbb{Z}$.
By the definition of $\lambda_x$ and $U_\alpha$, $\lambda_x\in U_\alpha$ if and only if $V_x\cong \mathcal{O}(\lambda_x)\oplus \mathcal{O}(-\lambda_x)$ with $0<\lambda_x\leq \alpha$, which is equivalent to
\[H^0\left(\mathbb{P}^1, V_x \otimes \mathcal{O}\left(-\alpha-\frac{1}{N}\right)\right)=0.\]
Considering the bundle $V^u\otimes \pi_1^*\mathcal{O}\left(-\alpha-\frac{1}{N}\right)$ on $\mathbb{P}^1\times M_{\Hig}^{N, \frac{d}{N}}$, its fiber of $\pi_2$ at $x$ is isomorphic to $V_x\left(-\alpha-\frac{1}{N}\right)$. The map
\[M_{\Hig}^{N, \frac{d}{N}} \to \mathbb{Z}_{\geq 0}, \quad x \mapsto \dim H^0\left(\mathbb{P}^1, V^u\otimes \pi_1^*\mathcal{O}\left(-\alpha-\frac{1}{N}\right)\bigg|_{\mathbb{P}^1\times \{x\}}\right)\]
is upper continuous by the semicontinuity theorem. Therefore, $U_\alpha$ is the inverse image of $0$ under this map, which is open.
\end{proof}


\subsubsection{Simpson's Hodge filtration and selfmap}
In the following, we construct a subsheaf $\mathcal{F}_{\frac{d'}{N}}$ of $V^u\mid_{\mathbb{P}^1_{k}\times U_{\frac{d'}{N}}}$ on $\mathbb{P}^1_{k}\times U_{\frac{d'}{N}}$ such that there is a open subset $U\subseteq U_{\frac{d'}{N}}$, for any $x\in U$, $\mathcal{F}_{\frac{d'}{N}}\cong \mathcal{O}\left(\frac{d'}{N}\right)$. Let
\[(V_{\frac{d'}{N}}, \nabla_{\frac{d'}{N}})\coloneqq (V^u, \nabla^u)\mid_{\mathbb{P}^1_{k}\times U_{\frac{d'}{N}}}, \quad L\coloneqq \pi_{2*}\left(\left(V_{\frac{d'}{N}}\otimes \pi_1^*\mathcal{O}\left(-\frac{d'}{N}\right)\right)_0\right).\]
Then for any $x\in U_{\frac{d'}{N}}$,
\[L_x/\mathfrak{m}_x L_x=H^0\left(\mathbb{P}^1_k, \left(V_{\frac{d'}{N}}\otimes p_1^*\mathcal{O}\left(-\frac{d'}{N}\right)\right)_0\bigg|_{\mathbb{P}^1_k\times \{x\}}\right)=H^0\left(\mathbb{P}^1_k, \mathcal{O}\oplus \mathcal{O}(-1)\right)=k,\]
where $L_x$ is the stalk of $L$ at $x$ and $\mathfrak{m}_x$ is the maximal ideal of the local ring at $x$. By Grauert’s theorem( \cite[III.12.9]{Har77}), $L$ is a line bundle on $U_{\frac{d'}{N}}$.
Let
\[\mathcal{F}=p_1^{*}\mathcal{O}\left(\frac{d'}{N}\right)\otimes p_2^{*}L,\]
$\mathcal{F}$ is a parabolic line bundle on $\mathbb{P}^1\times U_{\frac{d'}{N}}$ and $\mathcal{F}|_{\mathbb{P}^1\times\{x\}}\cong \mathcal{O}\left(\frac{d'}{N}\right)$ for any $x\in U_{\frac{d'}{N}}$. There is a natural morphism from $\mathcal{F}$ to $V_{\frac{d'}{N}}$:
\begin{equation*}
\begin{split}
\mathcal{F}=&\pi_1^{*}\mathcal{O}\left(\frac{d'}{N}\right)\otimes \pi_2^{*} \pi_{2*}\left(\left(V_{\frac{d'}{N}}\otimes \pi_1^*\mathcal{O}\left(-\frac{d'}{N}\right)\right)_0\right)\\
& \to \pi_1^{*}\mathcal{O}\left(\frac{d'}{N}\right)\otimes \left(V_{\frac{d'}{N}}\otimes \pi_1^*\mathcal{O}\left(-\frac{d'}{N}\right)\right)_0\to V_{\frac{d'}{N}}.
\end{split}
\end{equation*}
Its image is a parabolic line bundle on a Zariski open subset $\mathbb{P}^1\times U$ of $\mathbb{P}^1\times U_{\frac{d'}{N}}$.

Let $\underline{U}$ be the sub moduli functor of $M_{\Hig}^{N, \frac{d'}{N}}$, which is represented by $U$. Denote $\mathcal{F}_{\frac{d'}{N}}$ the restriction of the image of $\mathcal{F}\to V_{\frac{d'}{N}}$ on $U$. By taking grading sheaf with respect to $\mathcal{F}_{\frac{d'}{N}}$ after applying the inverse Cartier functor, we can define a natural transform
\[\Gr\circ \mathcal C_1^{-1} :\underline{U}\to \underline{M}_{\Hig}^{N, \frac{d'}{N}}.\]
Indeed, for any $k$-scheme $S$ and $f\in \underline{U}(S)= \Hom(S, U)$, $f$ is associated with the parabolic Higgs bundle $f^*\left((E^u, \theta^u)\mid_{\mathbb{P}^1_k\times U}\right)$ on $S$. By taking grading sheaf with respect to $f^*\mathcal{F}_{\frac{d'}{N}}$ after applying the inverse Cartier functor, we obtain a parabolic Higgs bundle over $(\mathbb{P}_S, \sum D_i )/S$, where $D_i=\{x_i\}\times S$. Thus, we can define a compatible family of maps
\[\Gr\circ \mathcal C_1^{-1}:\underline{U}(S)\to \underline{M}_{\Hig}^{N, \frac{d'}{N}}(S), \quad \left[f^*\left((E^u, \theta^u)\mid_{\mathbb{P}^1_k\times U}\right)\right]\mapsto \left[\Gr_{f^*\mathcal{F}_{\frac{d'}{N}}}f^*\left((V_{\frac{d'}{N}}, \nabla_{\frac{d'}{N}})\mid_{\mathbb{P}^1_k\times U}\right)\right],\]
which is a natural transform between the two functors. This also define a morphism between schemes:
\[\varphi\coloneqq\Gr\circ \mathcal C_1^{-1}: U \to M_{\Hig}^{N, \frac{d'}{N}}\]
 by the Yoneda lemma.

\begin{theorem}\label{Thm_SelfmapDominant}
Let $\frac{d}{N}\in J_N$ and $d'=\min\left\{N\left\langle\frac{pd}{N}\right\rangle, N\left\langle\frac{p(N-d)}{N}\right\rangle\right\}$, then the rational map $\varphi:M_{\Hig}^{N, \frac{d}{N}}\dashrightarrow M_{\Hig}^{N, \frac{d'}{N}}$ is dominant.
\end{theorem}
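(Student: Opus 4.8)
By the description $M_{\Hig}^{N,\lambda}\cong \mathbb{P}\Hom\big(\mathcal{O}(\lambda),\mathcal{O}(-\lambda)\otimes\Omega^1_{\mathbb{P}^1}(\log D)\big)$ together with \autoref{thm_ParaGrothendieckThm}, and the elementary computation $\lfloor m-2-2\tfrac{d}{N}\rfloor=\lfloor m-2-2\tfrac{d'}{N}\rfloor=m-3$ (which uses $\tfrac{d}{N}<\tfrac12$ and $0<\tfrac{d'}{N}\le\tfrac12$, the latter because $\langle\tfrac{pd}{N}\rangle+\langle\tfrac{p(N-d)}{N}\rangle=1$), both $M_{\Hig}^{N,\frac{d}{N}}$ and $M_{\Hig}^{N,\frac{d'}{N}}$ are isomorphic to $\mathbb{P}^{m-3}$, in particular irreducible of dimension $m-3$. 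Since $\varphi$ is a morphism defined on a dense open $U\subseteq M_{\Hig}^{N,\frac{d}{N}}$, the closure of $\varphi(U)$ is an irreducible subvariety of $M_{\Hig}^{N,\frac{d'}{N}}$ of dimension $(m-3)-e$, where $e\geq0$ is the dimension of a general fibre of $\varphi$; if $e=0$ this closure is all of $M_{\Hig}^{N,\frac{d'}{N}}$ and $\varphi$ is dominant. Thus everything reduces to proving that \emph{a general fibre of $\varphi$ is finite}.

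\textbf{Making $\varphi$ explicit.}
I would prove generic finiteness by describing $\varphi$ in terms of the zero divisor of the Higgs field. Identify $M_{\Hig}^{N,\frac{d}{N}}$ with the linear system $|\mathcal{O}_{\mathbb{P}^1}((m-3)x_1)|=\mathrm{Sym}^{m-3}\mathbb{P}^1_k$ by sending $[(E,\theta)]$ to the effective degree-$(m-3)$ divisor $Z(\theta)$ cut out by the Higgs field $\theta\colon \mathcal{O}(\tfrac{d}{N})\to\mathcal{O}(-\tfrac{d}{N})\otimes\Omega^1(\log D)$, and similarly for the target. Now feed the local form of $\mathcal C_1^{-1}$ from \autoref{sec_Para_Higgs_de_Rham} into the computation: on a Frobenius-lifted open cover $\mathcal C_1^{-1}(E,\theta)$ is $\big(\Phi^*E_\mu,\,\nabla_{can}+\tfrac{\rmd\widetilde\Phi_\mu}{p}\big)$ glued by $\exp\big(h_{\mu\tau}(\Phi^*\theta)\big)$. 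One must (i) determine the isomorphism type of the underlying parabolic bundle of $(V,\nabla):=\mathcal C_1^{-1}(E,\theta)$ — for $(E,\theta)$ in a dense open it is $\mathcal{O}(\tfrac{d'}{N})\oplus\mathcal{O}(-\tfrac{d'}{N})$, the parabolic weights being forced by \cite[Proposition 2.18]{KrSh20} and the minimality of $\lambda_x=\tfrac{d'}{N}$ being, by semicontinuity, an open dense condition — so that the Simpson–Hodge sub-line-bundle is canonically $\mathcal{F}_{\frac{d'}{N}}=\mathcal{O}(\tfrac{d'}{N})$; and (ii) read off the graded Higgs field $\theta'$ and its zero divisor $Z(\theta')$. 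The outcome I expect is that $\varphi$ is the restriction to $\mathrm{Sym}^{m-3}\mathbb{P}^1_k$ of a \emph{quasi-finite} correspondence assembled from the $p$-power Frobenius of $\mathbb{P}^1_k$ and a fixed elementary modification at $x_1$; this makes the general fibre of $\varphi$ finite, and by the first paragraph $\varphi$ is then dominant.

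\textbf{Main obstacle.}
The crux is part (ii): tracing exactly how the grading operation, applied after the Frobenius-twisted inverse Cartier pullback, transforms $Z(\theta)$, and verifying that the resulting self-map of the $(m-3)$-dimensional linear system has no positive-dimensional fibres. All the difficulty is localized at the single parabolic point $x_1$, where the weight $\tfrac{d}{N}$ is replaced by $\{\langle\tfrac{pd}{N}\rangle,\langle\tfrac{p(N-d)}{N}\rangle\}$ and the bundle undergoes an elementary modification; the task is to show that this modification collapses no directions. My plan for this is to base change along the cyclic cover $f\colon \mathbb{P}^1_k\to\mathbb{P}^1_k$, $x\mapsto x^N$, branched over $x_1$: by \autoref{thm_CompareInverseCartierFunctor} the parabolic inverse Cartier functor pulls back to the ordinary ($G$-equivariant) one and the parabolic weights become integral, so the finiteness assertion becomes an equivariant refinement of the unramified computation of \cite{SYZ22}, which one then transports back downstairs via the quasi-inverse equivalence $(f^*_{par},f_{par*})$ of \cite{Bis97} (compare \autoref{Thm_PullBack}). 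Finally, since $\mathcal C_1^{-1}$ incorporates a genuine Frobenius pullback, $\varphi$ may fail to be separable; for this reason I would not attempt to deduce generic finiteness from the injectivity of $\rmd\varphi$ at a point (which can vanish identically), and instead rely throughout on the divisor-theoretic, i.e. function-field-finiteness, argument sketched above.
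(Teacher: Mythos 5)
Your reduction is sound as far as it goes: both $M_{\Hig}^{N,\frac{d}{N}}$ and $M_{\Hig}^{N,\frac{d'}{N}}$ are irreducible of the same dimension $m-3$, so dominance of $\varphi$ is indeed equivalent to generic finiteness, and your caution about inseparability (the differential of $\varphi$ can vanish identically because $\mathcal C_1^{-1}$ involves a Frobenius pullback) is well placed. But this equivalence is essentially a restatement of the problem, and the entire content of the theorem is concentrated in the step you label the ``main obstacle'': you never actually prove that the general fibre is finite, you only announce the outcome you ``expect'' from an explicit description of $\varphi$ on zero divisors. That description is itself nontrivial and your proposed route to it raises further unaddressed issues: the cyclic cover $x\mapsto x^N$ is branched at \emph{two} points and pulls each of the unmarked poles $x_2,\dots,x_m$ back to $N$ points, so the upstairs problem is not literally the unramified situation of \cite{SYZ22} but a $G$-equivariant moduli problem with many more marked points, and one must track equivariance, stability, and the Hodge filtration through $f_{par}^*$, $f_{par*}$ before any finiteness statement downstairs can be extracted. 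As it stands the proposal is a plausible program, not a proof.

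The paper avoids all of this by a short induction on the number $m$ of marked points. The base case $m=3$ is a single point. For the inductive step one embeds the moduli space $M(\hat{x}_m)$ of parabolic Higgs bundles with poles only at $x_1,\dots,x_{m-1}$ into $M_{\Hig}^{N,\frac{d}{N}}$ (forget $x_m$); by induction $\overline{\varphi(M(\hat{x}_m))}$ is a copy of $\mathbb{P}^{m-4}$ inside $Z:=\overline{\mathrm{Im}(\varphi)}$, and if $\varphi$ were not dominant then $Z$ would be irreducible of dimension at most $m-4$, forcing $Z=\overline{\varphi(M(\hat{x}_m))}$, which is ruled out because points of $U$ with a genuine pole at $x_m$ map outside that boundary locus. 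This argument needs no explicit formula for $\varphi$, no fibre-dimension analysis, and no passage to the cyclic cover, so it is both shorter and logically complete where your sketch is not. If you wish to salvage your approach, the missing piece you must supply is a genuine proof that the divisor-level self-correspondence has no positive-dimensional fibres; alternatively, adopt the degeneration-to-fewer-poles induction, which is the mechanism the paper (following the spirit of \cite[Theorem 4.4]{SYZ22}) actually uses.
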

\begin{proof}
To prove that $\varphi$ is dominant, we use induction on $m$. When $m=3$, $M_{\Hig}^{N}$ is a single point, and the proposition holds trivially. Assume that the proposition holds for $m-1$. Set $Z\coloneqq \overline{\text{Im}(\varphi)}$. $Z$ is irreducible, since $Z=\overline{\varphi(U)}$ and $U\subseteq M^{N, \frac{d}{N}}$ is irreducible. If $\varphi$ is not dominant, $Z$ is a proper subscheme of $M_{\Hig}^{N, \frac{d'}{N}}\cong \mathbb{P}^{m-3}$, then $\dim Z\leq m-4$.
Let $M(\hat{x}_m)$ be the moduli space of semistable graded parabolic Higgs bundles of rank $2$ and degree $0$ over $\mathbb{P}^1$ such that the Higgs fields have $m-1$ poles ${x_1, x_2, \ldots, x_{m-1}}$ and the parabolic weights are in $J_N$ at $x_1$ and 0 at other points. There is a natural embedding $M(\hat{x}_m)\to M_{Hig}^{N, \frac{d}{N}}$, which is defined by forgetting $x_m$. The embedding is not surjective. By the assumption that $\varphi$ is dominant for $m-1$, thus, $\mathbb{P}^{m-4}\cong \overline{\varphi(M(\hat{x}_m))}$ is a proper subscheme of $Z$, which contradicts the condition that $Z$ is irreducible and $\dim Z\leq m-4$. Thus, $\varphi$ is dominant for $m$, and the proof is complete.
\end{proof}

Since $p\nmid N$ and $\gcd(d, N)=1$, there always exists a positive integer $k$ such that
\[\left\{\left\langle\frac{p^kd}{N}\right\rangle, \left\langle\frac{p^k(N-d)}{N}\right\rangle\right\}=\left\{\left\langle\frac{d}{N}\right\rangle, \left\langle\frac{N-d}{N}\right\rangle\right\}.\]
 Then the $k$-times composition $\varphi^k$ of $\varphi$ is a dominant rational map from $M_{\Hig}^{N, \frac{d}{N}}$ to itself. Using the same method as in \cite[Theorem 4.4]{SYZ22} and applying a theorem of Hrushovski (\cite[Corollary 1.2]{Hru04}), we can prove the following corollary of \autoref{Thm_SelfmapDominant}:
\begin{corollary}\label{thm_PeriodicPointsDense}
For any $\frac{d}{N}\in J_N$, let $k$ be the smallest positive integer such that
\[\left\{\left\langle\frac{p^kd}{N}\right\rangle, \left\langle\frac{p^k(N-d)}{N}\right\rangle\right\}=\left\{\left\langle\frac{d}{N}\right\rangle, \left\langle\frac{N-d}{N}\right\rangle\right\}.\]
 the set of periodic points of $\varphi^k$ is Zariski dense in $M_{\Hig}^{N, \frac{d}{N}}$. Combining this proposition with \autoref{thm_equFunctorHdRF&FFMod}, we obtain infinitely many parabolic Fontaine-Faltings modules.
\end{corollary}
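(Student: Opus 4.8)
The plan is to follow the method of \cite[Theorem 4.4]{SYZ22}, the essential input being Hrushovski's theorem on Frobenius-twisted periodic points of correspondences \cite[Corollary 1.2]{Hru04}. We may assume $d<\frac{N}{2}$ (the other case is symmetric up to relabeling), so that, with $k$ as in the statement, the unordered-pair condition forces the $k$-fold iterate of $d'=\min\{N\langle pd/N\rangle, N\langle p(N-d)/N\rangle\}$ to be $d$ again; hence by \autoref{Thm_SelfmapDominant}, composing dominant rational maps, $\varphi^k$ is a dominant rational self-map of $M:=M_{\Hig}^{N,\frac{d}{N}}\cong \mathbb P^{m-3}$, which is geometrically irreducible of dimension $m-3\geq 1$ when $m\geq 4$ (for $m=3$ the statement is vacuous). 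All the data used to build $\varphi$ — the marked points $x_1,\dots,x_m$, the fixed $W_2(k)$-lift of $\mathbb P^1_k$, and the $p$-power relative Frobenius entering $\mathcal C_1^{-1}$ — descend to some finite subfield $\mathbb F_q\subseteq k$, and after enlarging $q$ we may assume that $M$ and the rational map $\varphi^k$ are defined over $\mathbb F_q$ and that $\varphi^k$ is a morphism on a dense open $U\subseteq M$ defined over $\mathbb F_q$.

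Next I would let $\Gamma\subseteq M\times M$ be the closure of the graph of $\varphi^k$, with projections $\mathrm{pr}_1$ (birational onto $M$) and $\mathrm{pr}_2$ (dominant); since $M$ is irreducible of dimension $m-3$ and $\varphi^k$ is dominant, $\Gamma$ is irreducible of the same dimension and both projections are dominant and generically finite, and $\Gamma$ is defined over $\mathbb F_q$. Hrushovski's theorem then guarantees that the set of closed points $a\in U$ admitting an integer $n=n(a)\geq 1$ with $(a,\mathrm{Frob}_{q^{n}}(a))\in\Gamma$ in the locus where $\mathrm{pr}_1$ is an isomorphism — equivalently $\varphi^k(a)=\mathrm{Frob}_{q^{n}}(a)$, where $\mathrm{Frob}_{q^n}$ is the $q^n$-power Frobenius endomorphism of the $\mathbb F_q$-scheme $M$ — is Zariski dense in $M$. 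I would then observe that every such $a$ is in fact a periodic point of $\varphi^k$: writing $a\in M(\mathbb F_{q^\ell})$ and using that $\varphi^k$ commutes with the $\mathbb F_q$-morphism $\mathrm{Frob}_{q^n}$, induction along the $\mathrm{Frob}$-orbit of $a$ gives $\varphi^{kj}(a)=\mathrm{Frob}_{q^{jn}}(a)$ for all $j\geq 1$, so taking $j=\ell$ yields $\varphi^{k\ell}(a)=\mathrm{Frob}_{q^{\ell n}}(a)=a$. Thus the periodic points of $\varphi^k$ contain a Zariski-dense subset of $M$; since $\dim M\geq 1$ they are infinite, and — finitely many periodic points having any fixed period — their periods are unbounded.

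Finally I would translate periodic points back into periodic flows and representations. For a periodic point $a$ of $\varphi^k$, the rank-$2$ degree-$0$ parabolic Higgs bundle $(E_a,\theta_a)$ on $\mathbb P^1_k$ is equipped, at each stage of the iteration $\mathrm{Gr}\circ\mathcal C_1^{-1}$, with the canonical Hodge filtration $0\subset \mathcal O(\lambda)\subset V\cong \mathcal O(\lambda)\oplus\mathcal O(-\lambda)$ forced by semistability, so the orbit assembles into an honest $k\ell$-periodic parabolic Higgs-de Rham flow over $(\mathbb P^1_k,x_1+\cdots+x_m)$ with parabolic weights in $J_N$ at $x_1$, together with its periodic map. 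By \autoref{thm_equFunctorHdRF&FFMod} this produces a strict $p$-torsion parabolic Fontaine-Faltings module with $W_1(\mathbb F_{p^{k\ell}})$-endomorphism structure; non-isomorphic periodic points give non-isomorphic flows, and the unboundedness of $\ell$ already yields infinitely many distinct such modules. Applying $\mathbb D^{par}$ of \autoref{thm_ParabolicDFunctor} then gives the infinitely many parabolic crystalline representations of $\pi_1^{\et}(\mathbb P^1_K-\{0,1,\lambda,\infty\})$ in the case $m=4$.

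I expect the delicate points to be the following. First, one must know Hrushovski's quasi-fixed points can be taken in the good locus of $\varphi^k$, i.e. that the indeterminacy loci and their preimages under iteration do not obstruct the density; as in \cite{SYZ22} this is handled by applying \cite[Corollary 1.2]{Hru04} directly on the open $U$ where $\varphi^k$ is a morphism. Second, one must carry out the bookkeeping that upgrades an isomorphism of orbits of parabolic Higgs bundles to an isomorphism of periodic parabolic Higgs-de Rham flows in the sense of \autoref{sec_Para_Higgs_de_Rham}, including the periodic map $\psi$; this is routine here only because the filtration in the rank-$2$ degree-$0$ situation on $\mathbb P^1$ is rigid. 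Everything else — dominance of $\varphi^k$, generic finiteness of $\Gamma$, and the functoriality in \autoref{thm_equFunctorHdRF&FFMod} and \autoref{thm_ParabolicDFunctor} — is either already established above or formal.
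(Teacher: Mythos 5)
Your proposal is correct and is essentially the paper's own argument: the paper merely observes that $\varphi^k$ is a dominant rational self-map of $M_{\Hig}^{N,\frac{d}{N}}$ (by \autoref{Thm_SelfmapDominant}) and invokes the method of \cite[Theorem 4.4]{SYZ22} together with Hrushovski's theorem \cite[Corollary 1.2]{Hru04}, which is exactly the descent-to-$\mathbb F_q$, graph-correspondence, quasi-fixed-point, and Frobenius-commutation argument you spell out, followed by the same translation of periodic points into periodic flows and Fontaine--Faltings modules via \autoref{thm_equFunctorHdRF&FFMod}. The only point to keep in mind is that $\varphi^k$ is a priori only a Frobenius-twisted $\mathbb F_q$-map (the inverse Cartier functor involves a Frobenius pullback), but this is harmless for your argument, since the closure of its graph is still an irreducible $\mathbb F_q$-correspondence with both projections dominant and generically finite, and the commutation with $\Frob_q$ that you use still holds.
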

As an example, assume that $m=4$ and $N=5$. In this case,
\[M_{\Hig}^5=\coprod_{i=1}^4M_{\Hig}^{5, \frac{i}{5}}\]
 with
$M_{\Hig}^{5, \frac{1}{5}}\cong \mathbb{P}^1_{k}$, $M_{\Hig}^{5, \frac{2}{5}}\cong \mathbb{P}^1_{k}$, $M_{\Hig}^{5, \frac{3}{5}}\cong \Spec k$ and $M_{\Hig}^{5, \frac{4}{5}}\cong \Spec k$.

In the case of $p\equiv 1, 4\mod 5$, we have the diagram of the self map:

\begin{tikzpicture}
\Vertex[x=2, y=4, size=2, label=${\lambda=\frac{1}{5}}$, fontscale=1.5]{A}
\Vertex[x=6, y=4, size=0, label=${\lambda=\frac{4}{5}}$, position=above, fontscale=1.5]{B}
\Vertex[x=10, y=4, size=2, label=${\lambda=\frac{2}{5}}$, fontscale=1.5]{C}
\Vertex[x=14, y=4, size=0, label=${\lambda=\frac{3}{5}}$, position=above, fontscale=1.5]{D}

\Vertex[x=2, size=2, label=${\lambda=\frac{1}{5}}$, fontscale=1.5]{E}
\Vertex[x=6, size=0, label=${\lambda=\frac{4}{5}}$, position=below, fontscale=1.5]{F}
\Vertex[x=10, size=2, label=${\lambda=\frac{2}{5}}$, fontscale=1.5]{G}
\Vertex[x=14, size=0, label=${\lambda=\frac{3}{5}}$, position=below, fontscale=1.5]{H}

\Edge[Direct](A)(E)
\Edge[Direct](A)(F)
\Edge[Direct, style={dashed}](B)(E)
\Edge[Direct, style={dashed}](B)(F)
\Edge[Direct](C)(G)
\Edge[Direct](C)(H)
\Edge[Direct, style={dashed}](D)(G)
\Edge[Direct, style={dashed}](D)(H)

\draw (0, -1.2) rectangle (15, 1.2);
\draw (0, 2.8) rectangle (15, 5.2);
\node at (0, 2) {self map};
\node at (7.5, -1.7) {$M_{\Hig}^5$};
\node at (7.5, 5.7) {$M_{\Hig}^5$};
\end{tikzpicture}

In the case of $p\equiv 2, 3\mod 5$, we have the diagram of the self map:

\begin{tikzpicture}
\Vertex[x=2, y=4, size=2, label=${\lambda=\frac{1}{5}}$, fontscale=1.5]{A}
\Vertex[x=6, y=4, size=0, label=${\lambda=\frac{4}{5}}$, position=above, fontscale=1.5]{B}
\Vertex[x=10, y=4, size=2, label=${\lambda=\frac{2}{5}}$, fontscale=1.5]{C}
\Vertex[x=14, y=4, size=0, label=${\lambda=\frac{3}{5}}$, position=above, fontscale=1.5]{D}

\Vertex[x=2, size=2, label=${\lambda=\frac{2}{5}}$, fontscale=1.5]{E}
\Vertex[x=6, size=0, label=${\lambda=\frac{3}{5}}$, position=below, fontscale=1.5]{F}
\Vertex[x=10, size=2, label=${\lambda=\frac{1}{5}}$, fontscale=1.5]{G}
\Vertex[x=14, size=0, label=${\lambda=\frac{4}{5}}$, position=below, fontscale=1.5]{H}

\Edge[Direct](A)(E)
\Edge[Direct](A)(F)
\Edge[Direct, style={dashed}](B)(E)
\Edge[Direct, style={dashed}](B)(F)
\Edge[Direct](C)(G)
\Edge[Direct](C)(H)
\Edge[Direct, style={dashed}](D)(G)
\Edge[Direct, style={dashed}](D)(H)

\draw (0, -1.2) rectangle (15, 1.2);
\draw (0, 2.8) rectangle (15, 5.2);
\node at (0, 2) {self map};
\node at (7.5, -1.7) {$M_{\Hig}^5$};
\node at (7.5, 5.7) {$M_{\Hig}^5$};
\end{tikzpicture}

In both cases, the selfmaps between the maximal components are rational and dominant. The components of the image of $M_{\Hig}^{5, \frac{3}{5}}$ and $M_{\Hig}^{5, \frac{4}{5}}$ depend on the choice of $\{x_1, x_2, x_3, x_4\}$.

\subsection{Parabolic Higgs bundles associated with families of elliptic curves}
In \cite{KrSh20}, Krishnamoorthy and Sheng proved that any motivic (coming from geometry origin) Higgs bundle is periodic. In this subsection, we determine the periodic Higgs bundles arising from families of elliptic curves with four singular fibers.


Let $m=4$ and $D=0+1+\lambda+\infty$ with $\lambda\neq 0, 1, \infty$. Let $U=\mathbb{P}^1_{\mathbb{C}}-\{0, 1, \lambda, \infty\}$.
Given a family of elliptic curves $f:X \to \mathbb{P}^1_{\mathbb{C}}$ with 4 singular fibers at $\{0, 1, \lambda, \infty\}$. Let $X^{\circ}=f^{-1}(U)$. Steenbrink \cite{St76} shows that there exists a unique logarithmic connection $\nabla$ on the rank $2$ bundle $V\coloneqq R^1f_{*}\Omega^*_{X/\mathbb{P}^{1}_{\mathbb{C}}}(\log D)$ such that all the eigenvalues of the residues are contained in $[0, 1)\cap \mathbb{Q}$ and the restriction of $(V, \nabla)$ on $U$ is the Gau\ss-Manin connection $R^1f_{*}\left(\Omega^*_{X^{\circ}/U}, d\right)$ associated with $f$. There is a natural filtration
\[0\subseteq f_{*}\Omega^1_{X^/\mathbb{P}^{1}_{\mathbb{C}}}(\log D)\subseteq R^1f_{*}\Omega^*_{X^/\mathbb{P}^{1}_{\mathbb{C}}}(\log D)\]
satisfying the Griffiths transversality. \cite[Lemma 3.3]{IySi07}  implies that there is a  parabolic filtered de Rham bundle corresponding to $(V, \nabla)$. By taking the grading with respect to the filtration, we obtain a rank $2$ motivic graded parabolic Higgs bundle
\[(E, \theta)\coloneqq \left(f_{*}\Omega^1_{X^/\mathbb{P}^{1}_{\mathbb{C}}}(\log D)\oplus R^1f_{*}\mathcal{O}_{X}, \Gr(\nabla)\right)\]
arising from the family $f$.

In \cite{Her91}, Herfurtner classified all non-isotrivial families of elliptic curves over $\mathbb{P}_{\mathbb{C}}^1$ with four singular fibers. For any family of elliptic curves, the type of a singular fiber must be one of
\[\{I_n(n\geq 1), II, III, IV, I_n^*(n\geq 0), II^*, III^*, IV^*\}.\]
 \cite[Table 1]{Her91} lists the local monodromy of all these types of singular fibers. The eigenvalues of the local monodromy of these types of singular fibers are
\[\{1, e^{\pm\frac{\pi i}{3}}, e^{\pm\frac{\pi i}{2}}, e^{\pm\frac{2\pi i}{3}}, -1, e^{\pm\frac{\pi i}{3}}, e^{\pm\frac{\pi i}{2}}, e^{\pm\frac{2\pi i}{3}}\}.\]
Thus the eigenvalues of the residues of the logarithmic de Rham bundle arising from the family at the singular fiber are
\[\left\{0, \left(\frac{1}{6}, \frac{5}{6}\right), \left(\frac{1}{4}, \frac{3}{4}\right), \left(\frac{1}{3}, \frac{2}{3}\right), \left(\frac{1}{2}, \frac{1}{2}\right), \left(\frac{1}{6}, \frac{5}{6}\right), \left(\frac{1}{4}, \frac{3}{4}\right), \left(\frac{1}{3}, \frac{2}{3}\right)\right\}.\]
They are also equal to the parabolic weights of the parabolic Higgs bundle arsing from a family of elliptic curves at the singular fibers.

Let $(E, \theta)$ be a motivic parabolic Higgs bundle arising from a family of elliptic curves $f:X\to \mathbb{P}^1_{\mathbb{C}}$ with four singular fibers at $\{0, 1, \lambda, \infty\}$, with the parabolic weights in $\frac{1}{N}\mathbb{Z}_{>0}$ at $\infty$ and zero at $\{0, 1, \lambda\}$. By the discussion above, the types of the singular fibers at $\{0, 1, \lambda\}$ are of the form $I_n(n\geq 1)$, and the possible choices of parabolic weights at $\infty$ are $\left(\frac{1}{2}, \frac{1}{2}\right)$, $\left(\frac{1}{3}, \frac{2}{3}\right)$, $\left(\frac{1}{4}, \frac{3}{4}\right)$ and $\left(\frac{1}{6}, \frac{5}{6}\right)$. When the parabolic weights are $\left(\frac{1}{2}, \frac{1}{2}\right)$, the type of the singular fiber at $\infty$ is $I^{*}_{n}(n\geq 0)$; when the parabolic weights are $\left(\frac{1}{3}, \frac{2}{3}\right)$, the type of the singular fiber at $\infty$ is $IV$ or $IV^*$; when the parabolic weights are $\left(\frac{1}{4}, \frac{3}{4}\right)$, the type of the singular fiber at $\infty$ is $III$ or $III^*$; when the parabolic weights are $\left(\frac{1}{6}, \frac{5}{6}\right)$, the type of the singular fiber at $\infty$ is $II$ or $II^*$.

Let $X, Y$ be the homogeneous coordinates of $\mathbb{P}^1_{\mathbb{C}}$. In \autoref{Table1}, up to transfer the $``*"$ and $``*"$ the singular fibers in pairs (\cite[p.324]{Her91}), we list all the families of elliptic curves with four singular fibers, whose parabolic weights of the associated parabolic Higgs bundles are in $\frac{1}{N}\mathbb{Z}_{>0}$ at a single point.

\begin{table}[htp]\label{Table1}
\caption{}
\begin{tabular}{|c|c|c|c|}
\hline
$N$& fiber combination & singular locus& $j$-invariant at $[X:Y]$ \\ \hline
\multirow{4}{*}{2} & $I_1I_1I_1I_3^*$ & $(\omega_1, \omega_2, \infty, 0)$ & $-\frac{4}{(\alpha-1)^2} \frac{\left(X^2+2 \alpha X Y+Y^2\right)^3}{X^3 Y\left[12 X^2+3\left(3 \alpha^2+6 \alpha-1\right) X Y+4(\alpha+2) Y^2\right]}, \quad \alpha \neq-2, -\frac{5}{3}, 1$ \\ \cline{2-4}
 & $I_1I_1I_2I_2^*$ & $(\omega_3, \omega_4, \infty, 0)$ & $\frac{4}{(2-\alpha)^2} \frac{\left(X^2+\alpha X Y+Y^2\right)^3}{X^2 Y^2\left[3 X^2+2(2 \alpha-1) X Y+3 Y^2\right]}, \quad\alpha\neq -1, 2$ \\ \cline{2-4}
 &$I_0^*I_4I_1I_1$ & $(\lambda, 1, \infty, 0)$&$\frac{1}{108} \frac{\left(X^2+14 X Y+Y^2\right)^3}{X Y(X-Y)^4} \quad \lambda \neq 0, 1, \infty$ \\ \cline{2-4}
 &$I_0^*I_2I_2I_2$ & $(\lambda, 1, \infty, 0)$& $\frac{4}{27} \frac{\left(X^{2}-X Y+Y^{2}\right)^{3}}{X^{2} Y^{2}(X-Y)}, \quad \lambda\neq 0, 1, \infty$ \\ \hline
\multirow{5}{*}{3} & $I_1I_1I_2IV^*$ & $(\omega_5, \omega_6, \infty, 0)$ & $\frac{X(X+2 \alpha Y)^3}{Y^2\left[\left(3 \alpha^2-2\right) X^2+2 \alpha\left(4 \alpha^2-3\right) X Y-Y^2\right]}, \quad \alpha \neq 0, \pm \sqrt{\frac{1}{2}}, \pm \sqrt{\frac{2}{3}}$ \\ \cline{2-4}
 & $I_1I_1I_6IV$& $(1, -1, \infty, 0)$ & $\frac{1}{64}\frac{X^2(9X^2-8Y^2)^3}{Y^6(X^2-Y^2)}$ \\ \cline{2-4}
 & $I_1I_2I_5IV$ & $\left(-\frac{27}{4}, -\frac{1}{2}, \infty, 0\right)$ & $-\frac{4}{27} \frac{X^{2}\left(X^{2}+8 X Y+10 Y^{2}\right)^{3}}{Y^{5}(2 X+Y)^{2}(4 X+27 Y)}$ \\ \cline{2-4}
 & $I_3I_3I_2IV$ & $(\infty, 0, -1, 1)$ & $-\frac{1}{2^{12}} \frac{(X-Y)^{2}\left(9 X^{2}+14 X Y+9 Y^{2}\right)^{3}}{X^{3} Y^{3}(X+Y)^{2}}$ \\ \cline{2-4}
 & $I_0^*I_3I_1II$& $(\lambda, 0, \infty, 1)$& $-\frac{1}{64} \frac{(X-Y)(X-9 Y)^{3}}{X^{3} Y}, \quad \lambda\neq 0, 1, \infty$ \\ \hline
\multirow{6}{*}{4} & $I_1I_1I_1III^*$ & $(\omega_7, \omega_8, \infty, 0)$ & $\frac{(X+\alpha Y)^{3}}{Y\left[\left(3 \alpha-2\right) X^{2}-\left(3 \alpha^{2}-1\right) X Y+\alpha^3 Y\right]}, \quad \alpha \neq -\frac{1}{3}, 0, \frac{2}{3}, 1 $ \\ \cline{2-4}
 & $I_1I_1I_7III$ & $(c_1, c_2, \infty, 0)$ & $\frac{4}{27} \frac{\left(X^{3}+4 X^{2} Y+10 X Y^{2}+6Y^{3}\right)^{3}}{Y^{7}(4X^2+13XY+32Y^2)}$ \\ \cline{2-4}
 & $I_1I_2I_6III$ & $(4, 1, \infty, 0)$ & $\frac{4}{27} \frac{\left(X^{3}-6 X^{2} Y+9 X Y^{2}-3 Y^{3}\right)^{3}}{Y^{6}(X-Y)^{2}(X-4 Y)}$ \\ \cline{2-4}
 & $I_1I_3I_5III$ & $(-\frac{25}{3}, 0, \infty, \frac{1}{5})$ & $-\frac{25}{2^{14} \cdot 3^{3}} \frac{\left(5 X^{3}+45 X^{2} Y+39 X Y^{2}-25 Y^{3}\right)^{3}}{X^{3} Y^{5}(3 X+25 Y)} $ \\ \cline{2-4}
 & $I_2I_3I_4III$ & $(-\frac{1}{3}, 0, \infty, 1)$ & $\frac{1}{108} \frac{\left(16 X^{3}-3 X Y^{2}-Y^{3}\right)^{3}}{X^{3} Y^{4}(3 X+Y)^{2}}$ \\ \cline{2-4}
 &$ I_0^*I_2I_1III$ & $(\lambda, 0, \infty, 1)$&$-\frac{1}{27} \frac{(X-4 Y)^{3}}{X^{2} Y}, \quad \lambda\neq 0, 1, \infty$ \\ \hline
\multirow{5}{*}{6} & $I_1I_1I_8II$ & $(c_3, c_4, \infty, 0)$ & $-\frac{4}{27} \frac{X\left(X^{3}-6 X^{2} Y+15 X Y^{2}-12 Y^{3}\right)^{3}}{Y^{8}\left(3 X^{2}-14 X Y+27 Y^{2}\right)}$ \\ \cline{2-4}
 & $I_1I_2I_7II$ & $(-\frac{9}{4}, -\frac{8}{9}, \infty, 0)$ & $-4 \frac{X\left(9 X^{3}+36 X^{2} Y+42 X Y^{2}+14 Y^{3}\right)^{3}}{Y^{7}(9 X+8 Y)^{2}(4 X+9 Y)}$ \\ \cline{2-4}
 & $I_1I_4I_5II$ & $(-10, 0, \infty, \frac{1}{8})$ & $-\frac{1}{2^{3} \cdot 3^{12}}\frac{(8X-Y)(8X^3+87X^2Y+96XY^2-63Y^3)^3}{X^4Y^5(X+10Y)}$ \\ \cline{2-4}
 & $I_2I_3I_5II$ & $(-\frac{5}{9}, 0, \infty, 3)$ & $-\frac{1}{2^{14} \cdot 3} \frac{(X-3 Y)\left(81 X^{3}-9 X^{2} Y-53 X Y^{2}-27 Y^{3}\right)^{3}}{X^{3} Y^{5}(9 X+5 Y)^{2}} $ \\ \cline{2-4}
 &$ I_0^*I_1I_1 IV$ & $(\lambda, 0, \infty, 1)$&$-\frac{1}{4} \frac{(X-Y)^{2}}{X Y}, \quad \lambda \neq 0, 1, \infty$\\ \hline
\end{tabular}
\end{table}

In this table,
\begin{equation*}
\begin{split}
c_{1, 2}&=\frac{1}{4}\left(\frac{1 \pm i \sqrt{7}}{2}\right)^{7}, \quad c_{3, 4}=-\frac{1}{3}(1 \pm i \sqrt{2})^{4}, \\
\omega_{1, 2}&=-\frac{1}{8}\left(3 \alpha^{2}+6 \alpha-1 \pm \sqrt{\frac{1}{3}(\alpha-1)(3 \alpha+5)^{3}}\right), \\
\omega_{3, 4}&=-\frac{1}{3}\left(2\alpha-1\pm2\sqrt{\alpha^2-\alpha-2}\right), \\
\omega_{5, 6}&=-\frac{1}{3 \alpha^{2}-2}\left(4\alpha^3-3\alpha \pm \sqrt{2\left(2 \alpha^{2}-1\right)^{3}}\right), \\
\omega_{7, 8}&=-\frac{1}{6 \alpha-4}\left(3 \alpha^{2}-1 \pm \sqrt{(3 \alpha+1)(1-\alpha)^{3}}\right).
\end{split}
\end{equation*}
For any family in the table, by applying an automorphism of $\mathbb{P}^1_{\mathbb{C}}$,
we can transfer the singular locus to $\{0, 1, \lambda, \infty\}$.

The tuple $(E, \theta, \mathbb{P}^1_{\mathbb{C}})$ admits a spreading out $(\mathscr{E}, \Theta, \mathbb{P}^1_{A})$ over $A$, where $A$ is a finitely generated sub $\mathbb{Z}$-algebra of $\mathbb{C}$. \cite[Proposition 3.3]{LSZ19} implies that there is a nonempty open subset $U\subseteq \Spec A$ such that for any maximal ideal $\mathfrak{p}\in U$, the parabolic Higgs bundle
$$(E, \theta)_{\mathfrak{p}}\coloneqq (\mathscr{E}, \Theta)\otimes \overline{k_{\mathfrak{p}}}$$ is $1$-periodic, where $k_{\mathfrak{p}}$ is the residue field of $A$ at $\mathfrak{p}$. Fix a maximal ideal $\mathfrak{p}\in U$, assume that $\Char k_{\mathfrak{p}}=p>3$. The isomorphism class $[(E, \theta)_{\mathfrak{p}}]\in M_{\Hig}^N(\overline{k_\mathfrak{p}})$ is a $1$-periodic point of the selfmap.

\end{document}